\newtheorem{thm}{Theorem}[section]
\newtheorem{cor}{Corollary}[section]
\newtheorem{lem}{Lemma}[section]
\newtheorem{prop}{Proposition}[section]
\theoremstyle{definition}
\newtheorem{defn}{Definition}[section]
\theoremstyle{remark}
\newtheorem{rem}{Remark}[section]
\numberwithin{equation}{section}
\def \N{\mathbb{N}}
\def \R{\mathbb{R}}
\def \E{\mathbb{E}}
\def \F{\mathbb{F}}
\def \P{\mathbb{P}}
\def \eps{\varepsilon}
\def \trans{^{\scriptscriptstyle{\intercal}}}
\begin{document}

\begin{frontmatter}

\title{Crandall-Lions Viscosity Solutions for Path-Dependent PDEs: the Case of Heat Equation}

\runtitle{Viscosity Solutions for Path-Dependent PDEs}

\author{
\textsc{Andrea Cosso}\thanks{University of Bologna, Dipartimento di Matematica, Piazza di Porta S. Donato, 5, 40126 Bologna, Italy, \sf andrea.cosso at unibo.it}
\qquad\quad
\textsc{Francesco Russo}\thanks{ENSTA Paris, Institut Polytechnique de Paris,
 Unit\'e de Math\'ematiques Appliqu\'ees, 828, boulevard des Mar\'echaux, 91120 Palaiseau, France, \sf francesco.russo at ensta-paris.fr}
}





\begin{abstract}
We address our interest to the development of a 
theory of viscosity solutions \`a la Crandall-Lions for path-dependent partial differential equations (PDEs), namely PDEs in the space of continuous paths $C([0,T];\R^d)$. Path-dependent PDEs can play a central role in the study of certain classes of optimal control problems, as for instance optimal control problems with delay. Typically, they do not admit a smooth solution satisfying the
corresponding HJB equation in a classical sense, it is therefore natural to search for a weaker notion of solution. While other notions of generalized solution have been proposed in the literature, the extension of the Crandall-Lions framework to the path-dependent setting is still an open problem. The question of uniqueness of the solutions, which is the most delicate issue, will be based on early ideas from the theory of viscosity solutions and a suitable variant of Ekeland's variational principle. This latter is  based on 
the construction of a smooth gauge-type function,
where smooth is meant in the horizontal/vertical (rather than Fr\'echet) sense. In order to make the presentation more readable, we address the path-dependent heat equation, which in particular simplifies the smoothing of its natural ``candidate'' solution. Finally, concerning the existence part, we provide a functional It\^o formula under general assumptions, extending earlier results in the literature.\end{abstract}

\begin{keyword}[class=MSC]
\kwd{35D40} 
\kwd{35R15}
\kwd{60H30}
\end{keyword}

\begin{keyword}
\kwd{Path-dependent partial differential equations}
\kwd{viscosity solutions}
\kwd{functional It\^o formula}
\end{keyword}

\end{frontmatter}

\section{Introduction}

Path-dependent heat equation refers to the second-order partial differential equation in the space of continuous paths
\begin{equation}\label{PPDE_Intro}
\begin{cases}
\vspace{2mm}
- \partial_t^H v(t,\boldsymbol x) - \dfrac{1}{2} \textup{tr}\big[\partial_{\boldsymbol x\boldsymbol x}^V v(t,\boldsymbol x)\big] \ = \ 0, \qquad\qquad &(t,\boldsymbol x)\in[0,T)\times C([0,T];\R^d), \\
v(T,\boldsymbol x) \ = \ \xi(\boldsymbol x), &\,\boldsymbol x\in C([0,T];\R^d).
\end{cases}
\end{equation}
Here $C([0,T];\R^d)$ denotes the Banach space of continuous paths $\boldsymbol x\colon[0,T]\rightarrow\R^d$ equipped with the supremum norm $\|\boldsymbol x\|_\infty:=\sup_{t\in[0,T]}|\boldsymbol x(t)|$, with $|\cdot|$ denoting the Euclidean norm on $\R^d$. The terminal condition $\xi\colon C([0,T];\R^d)\rightarrow\R$ is assumed to be continuous and bounded. We refer to equation \eqref{PPDE_Intro} as path-dependent \emph{heat} equation.
Similarly as for the usual heat equation, it admits
the following
Feynman-Kac representation formula in terms of the $d$-dimensional Brownian motion $\boldsymbol W=(\boldsymbol W_s)_{s\in[0,T]}$:
\begin{equation}\label{v_Intro}
v(t,\boldsymbol x) \ = \ \E\big[\xi(\boldsymbol W^{t,\boldsymbol x})\big], \qquad \forall\,(t,\boldsymbol x)\in[0,T]\times C([0,T];\R^d),
\end{equation}
where
\[
\boldsymbol W_s^{t,\boldsymbol x} \ := \
\begin{cases}
\boldsymbol x(s), & s\leq t, \\
\boldsymbol x(t) + \boldsymbol W_s - \boldsymbol W_t, \qquad\quad & s>t.
\end{cases}
\]
In the case of the classical heat equation $\xi$ only depends on the terminal value $\boldsymbol W_T^{t,\boldsymbol x}$.

The peculiarity of equation
\eqref{PPDE_Intro}
is the presence of the so-called functional or pathwise derivatives $\partial_t^H v$, $\partial_{\boldsymbol x\boldsymbol x}^V v$, where $\partial_t^H v$ is known as horizontal derivative, while $\partial_{\boldsymbol x\boldsymbol x}^V v$ is the matrix of second-order vertical derivatives.
Those derivatives appeared in \cite{K99,K15} (under the name of coinvariant derivatives) as building block of the so-called $i$-smooth analysis, and independently in \cite{AH02}, where they were denoted Clio derivatives; later, they were rediscovered by \cite{dupire} (from which we borrow terminology and definitions), which adopted a slightly different definition based on the space of c\`adl\`ag paths and in addition developed a related stochastic calculus, known as functional It\^o calculus, including in particular the so-called functional It\^o formula. Differently from the classical Fr\'echet derivative on $C([0,T];\R^d)$, the distinguished features of the pathwise derivatives are their finite-dimensional nature and the property of being non-anticipative, which follow from the interpretation of $t$ in $\boldsymbol x(t)$ as time variable. This means that $v(t,\boldsymbol x)$ only depends on the values of the path $\boldsymbol x$ up to time $t$; moreover, the horizontal and vertical derivatives at time $t$ are computed keeping the past values frozen, while only the present value of the path (that is $\boldsymbol x(t)$) can vary. The related functional It\^o calculus was rigorously investigated in \cite{contfournie10,ContFournieAP,CF16}. \cite{CGR,cosso_russoStrict}  also gave a contribution in this direction, exploring the relation between pathwise derivatives and Banach space stochastic calculus, built on an appropriate notion of Fr\'echet type derivative and firstly conceived in \cite{DGR}, see also \cite{DGRnote,DGR2,digirrusso12,digirfabbrirusso13}.

Partial differential equations in the space of continuous paths (also known as functional or Clio or path-dependent partial differential equations) are mostly motivated by optimal control problems of deterministic and stochastic systems with delay (or path-dependence) in the state variable. Such control systems arise in many fields, as for instance optimal advertising theory \cite{GM06,GMS09}, chemical engineering \cite{GKM09}, financial management \cite{F11,OSZ11}, economic growth theory \cite{BFG12}, mean field game theory \cite{BCLY17}, biomedicine \cite{GM18,RSTM18}, systemic risk \cite{CFMS18}. The underlying deterministic or stochastic controlled differential equations with delay can be studied
in two ways: first using a direct approach (see for instance \cite{IN64,S84,K99,IKS03,KI15}),
second by lifting them into a suitable infinite-dimensional framework, leading to evolution equations in Hilbert (as in \cite{cho,DZ96,flandoli_zanco13}) or Banach spaces (as in \cite{M84,M98,DGR}). The latter methodology turned out to be preferable to address general optimal control problems with delay (see for instance \cite{VK81,I82,GM06,FGG10a,FGG10b,FMT10,fabbrigozziswiech}), although such an infinite-dimensional reformulation may require some additional artificial assumptions to be imposed on the original control problem.
On the other hand, the direct approach was  adopted for special problems where the Hamilton-Jacobi-Bellman equation reduces to a finite-dimensional differential equation, as in \cite{EOS00,LR03}. This approach can
now regain relevance thanks to a well-grounded theory of path-dependent partial differential equations. To this regard, the path-dependent heat equation represents the primary test for such a theory, it indeed requires the main building blocks of the methodology, without overloading the proofs with additional technicalities.

Path-dependent partial differential equations represent a quite recent area of research. Typically, they do not admit a smooth solution satisfying the equation in a classical sense, mainly because of the awkward nature of the underlying space $C([0,T];\R^d)$. This happens also for the path-dependent heat equation, which in particular does not have the smoothing effect characterizing the classical heat equation,
except in some specific cases (as shown in \cite{DGR, DGRClassical})
with $\xi$ belonging to the class of so-called cylinder or tame functions (therefore depending specifically on a finite number of integrals with respect to the path) or $\xi$ being smoothly Fr\'echet differentiable. It is indeed quite easy, relying on the probabilistic representation formula \eqref{v_Intro}, to see that the function $v$ is not smooth (in the horizontal/vertical sense mentioned above) for terminal conditions of the form
\[
\xi(\boldsymbol x) \ = \ \sup_{0\leq t\leq T} \boldsymbol x(t), \qquad\qquad\qquad \xi(\boldsymbol x) \ = \ \boldsymbol x(t_0),
\]
for some fixed $t_0\in(0,T)$. For a detailed analysis of the first case above we refer to Section 3.2 in \cite{cosso_russoNorway}, see also Remark 3.8 in
\cite{cosso_russoStrict}. Concerning the second case, see for instance Example 11.1.3 of \cite{Zhang}. It is however worth mentioning that some positive results on smooth solutions were obtained in \cite{cosso_russoStrict,peng_wang}. We also refer to Chapter 9 of \cite{DGR} and \cite{DGRClassical}, where smooth solutions were investigated using a Fr\'echet type derivative formulation.

It is therefore natural to search for a weaker notion of solution, as the notion of viscosity solution, commonly used in the standard finite-dimensional case. The theory of viscosity solutions, firstly introduced in \cite{CL81,CL83} for first-order equations in finite dimension and later extended to the second-order case in \cite{L83c,L83a,L83b}, provides a well-suited framework guaranteeing the desired existence, uniqueness, and stability properties (for a comprehensive account see \cite{crandishiilions92}). The extension of such a theory to equations in infinite dimension was initiated by \cite{CL1,L88,L89a,L89b,S88,swiech94}. One of the structural assumption is that the state space has to be a Hilbert space or, slightly more general, certain Banach space with smooth norm, not including for instance the Banach space $C([0,T];\R^d)$ (notice however that in this paper we do not directly generalize those results to $C([0,T];\R^d)$, as we adopt horizontal/vertical, rather than Fr\'echet, derivatives on $C([0,T];\R^d)$).

First-order path-dependent partial differential equations were deeply investigated in \cite{Lu07} using a viscosity type notion of solution, which differs from the Crandall-Lions definition as the maximum/minimum condition is formulated on the subset of absolutely continuous paths. Such a modification does not affect existence in the first-order case, however it is particularly convenient for uniqueness, which is indeed established under general conditions. Other notions of generalized solution designed for first-order equations were adopted in \cite{AH02} as well as in \cite{Lu01,Lu03a,Lu03b}, where the minimax framework introduced in \cite{S80,SS78} was implemented. We also mention  \cite{BK18}, where such a minimax approach was extended to first-order path-dependent Hamilton-Jacobi-Bellman equations in infinite dimension. Concerning the second-order case, a first attempt to extend the Crandall-Lions framework to the path-dependent case was carried out in \cite{peng12}, even though a technical condition on the semi-jets was imposed, namely condition (16) in \cite{peng12}, which narrows down the applicability of such a result. In the literature, this was perceived as an almost insurmountable obstacle, so that the Crandall-Lions definition was not further investigated, while other notions of generalized solution were devised, see \cite{EKTZ,PS15,tangzhang13,cosso_russoStrong-Visc,Ohashi,paperPathDep,BKMZ}. We mention in particular the framework designed in \cite{EKTZ} and further investigated in \cite{etzI,etzII,R16,rtz1,rtz3,CFGRT}, where the notion of sub/supersolution adopted differs from the Crandall-Lions definition as the tangency condition is not pointwise but in the sense of expectation with respect to an appropriate class of probability measures. On the other hand, in \cite{cosso_russoStrong-Visc} we introduced the so-called strong-viscosity solution, which is quite similar to the notion of good solution for partial differential equations in finite dimension, that in turn is known to be equivalent to the definition of $L^p$-viscosity solution, see for instance \cite{JKS02}. We also mention \cite{paperPathDep}, where the authors deal with semilinear path-dependent equations and propose the notion of decoupled mild solution, formulated in terms of generalized transition 
semigroups; such a notion also adapts to path-dependent equations with integro-differential terms.

In the present paper we adopt the natural generalization of the well-known definition of viscosity solution \`a la Crandall-Lions given in terms of test functions and, under this notion, we establish existence and uniqueness for the path-dependent heat equation \eqref{PPDE_Intro}. The uniqueness property is derived, as usual, from the comparison theorem. The proof of this latter, which is the
most delicate issue, is known to be quite involved even in the classical finite-dimensional case (see for instance \cite{crandishiilions92}), and in its latest form is based on Ishii's lemma.  Here we follow instead an earlier approach (see for instance Theorem II.1 in \cite{L83b} or Theorem IV.1 in \cite{L88}), which in principle can be applied to any path-dependent equation admitting a ``candidate'' solution $v$, for which a probabilistic representation formula holds. This is the case for equation \eqref{PPDE_Intro}, where the candidate solution is given by formula \eqref{v_Intro}, but it is also the case for Kolmogorov type equations or, more generally, for Hamilton-Jacobi-Bellman equations. This latter is the class of equations studied in \cite{L83b} and \cite{L88}, whose methodology in a nutshell can be described as follows. Let $u$ (resp. $w$) be a viscosity subsolution (resp. supersolution) of the same path-dependent equation. The desired inequality $u\leq w$ follows if we compare both $u$ and $w$ to the ``candidate'' solution $v$, that is if we prove the two inequalities $u\leq v$ and $v\leq w$. Let us consider for instance the first inequality $u\leq v$. In the non-path-dependent and finite-dimensional case (as in \cite{L83b}), this is proved proceeding as follows: firstly, performing a smoothing of $v$ through its probabilistic representation formula; secondly, taking a local maximum of $u-v_n$ (here it is used the local compactness of the finite-dimensional underlying space), with $v_n$ being a smooth approximation of $v$; finally, the inequality $u\leq v_n$ is proved proceeding as in the so-called partial comparison theorem (comparison between a viscosity subsolution/supersolution and a smooth supersolution/subsolution), namely exploiting the viscosity subsolution property of $u$ with $v_n$ playing the role of test function. In \cite{L88}, where such a methodology was extended to the infinite-dimensional case, the existence of a maximum of $u-v_n$ is achieved relying on Ekeland's variational principle, namely exploiting the completeness of the space instead of the missing local compactness.

In this paper we generalize the methodology sketched above to the path-dependent case. There are however 
at least two crucial mathematical issues required by such a proof, still not at disposal in the path-dependent framework.

Firstly, given a candidate solution $v$, it is not a priori obvious how to perform a smooth approximation of $v$ itself starting from its probabilistic representation formula. Here we exploit the results proved in \cite{cosso_russoStrict} (Theorem 3.5) and \cite{cosso_russoStrong-Visc} (Theorem 3.12), which are reported and adapted to the present framework in Appendix \ref{AppC} (Lemma \ref{L:Classical} and Lemma \ref{L:Smoothing}, respectively). Notice that such results apply to the case of the path-dependent heat equation \eqref{PPDE_Intro}, where there is only the terminal condition $\xi$ in the probabilistic representation formula \eqref{v_Intro} for $v$. More general results are at disposal in \cite{cosso_russoStrict} and \cite{cosso_russoStrong-Visc}, which cover the case of semilinear path-dependent partial differential equations, characterized by the presence of four coefficients $b$, $\sigma$, $F$, $\xi$ (see, in particular, Theorem 3.16 in \cite{cosso_russoStrong-Visc} for more details). However, when those other coefficients appear in the path-dependent partial differential equation, we need more information on the sequence $\{v_n\}_n$ approximating $v$. For instance, we also have to estimate the derivatives of $v_n$ in order to proceed as in \cite{L83b} or \cite{L88}. Since such results are still not at disposal in the path-dependent setting, in order to make the paper more readable and not excessively lengthy, here we address the case of the path-dependent heat equation.

Secondly, concerning the existence of a maximum of $u-v_n$, we rely on a generalized version of Ekeland's variational principle for which we need a smooth gauge-type function with bounded derivatives, as explained below. Our equation is in fact formulated on the non-locally compact space $[0,T]\times C([0,T];\R^d)$ endowed with the pseudometric
\[
d_\infty\big((t,\boldsymbol x),(t',\boldsymbol x')\big) \ := \ |t - t'| + \|\boldsymbol x(\cdot\wedge t) - \boldsymbol x'(\cdot\wedge t')\|_\infty.
\]
Recall that Ekeland's variational principle, in its original form, applied to $([0,T]\times C([0,T];\R^d), d_\infty$) states that a perturbation $u(\cdot,\cdot)-v_n(\cdot,\cdot)-\delta d_\infty((\cdot,\cdot),(\bar t,\boldsymbol{\bar x}))$ of $u(\cdot,\cdot)-v_n(\cdot,\cdot)$ has a strict global maximum, with the perturbation being expressed in terms of the distance $d_\infty$ (the point $(\bar t,\boldsymbol{\bar x})$ is fixed). As the map $(t,\boldsymbol x)\mapsto d_\infty((t,\boldsymbol x),(\bar t,\boldsymbol{\bar x}))$ is not smooth, it cannot be a test function. In order to have a smooth map instead of $d_\infty$, we need a smooth variational principle on $[0,T]\times C([0,T];\R^d)$. To this end, the starting point is a generalization of the so-called Borwein-Preiss smooth variant of Ekeland's variational principle (see for instance \cite{BZ05}), which works when $d_\infty$ is replaced by a so-called gauge-type function (see Definition \ref{D:Ekeland}). For the proof of the comparison theorem, we have to construct a gauge-type function which is also smooth and with bounded derivatives, recalling that \emph{smooth} in the present context means in the horizontal/vertical (rather than in the Fr\'echet) sense. In Section \ref{S:Smoothing} such a gauge-type function is built through a smoothing of $d_\infty$ itself (more precisely, of the part concerning the supremum norm). This latter smoothing is performed by convolution, firstly in the vertical direction, that is in the direction of the map $1_{[t,T]}$ (Lemma \ref{L:rho_infty}), then in the horizontal direction (Lemma \ref{L:Smoothing_rho_infty}), the ordering of smoothings being crucial. Notice in particular that the supremum norm is already smooth in the horizontal direction; however, after the vertical smoothing, we lose in general the horizontal regularity because of the presence of the term $1_{[t,T]}$; for this reason we have also to perform the horizontal smoothing. The resulting smooth gauge-type function with bounded derivatives corresponds to the function $\rho_\infty$ defined in \eqref{rho}.

Regarding existence, we prove that the candidate solution $v$ in \eqref{v_Intro} solves in the viscosity sense equation \eqref{PPDE_Intro}. We proceed essentially as in the classical non-path-dependent case, relying as usual on It\^o's formula, which in the present context corresponds to the functional It\^o formula. Such a formula was firstly stated in \cite{dupire} and then rigorously proved in \cite{contfournie10,ContFournieAP}, see also \cite{CF16,fournie,cosso_russoStrict,Ohashi,Oberhauser}. In the present paper we provide a functional It\^o formula under general assumptions (Theorem \ref{T:Ito}). In particular, we do not require any boundedness assumption on the functional $u\colon[0,T]\times C([0,T];\R^d)\rightarrow\R$, thus improving (when the semimartingale process is continuous) the results stated in \cite{contfournie10,ContFournieAP}.

The paper is organized as follows. Section \ref{S:FunctItoCalc} is devoted to pathwise derivatives and functional It\^o calculus. In particular, there is the functional It\^o formula (Theorem \ref{T:Ito}) whose complete proof is reported in Appendix \ref{AppA}. In Section \ref{S:Smoothing} we prove the smooth variational principle on $[0,T]\times C([0,T];\R^d)$, constructing the smooth gauge-type function with bounded derivatives. In Section \ref{S:Visc} we provide the (path-dependent) Crandall-Lions definition of viscosity solution for a general path-dependent partial differential equation. We then study in detail the path-dependent heat equation. In particular, we prove existence showing that the so-called candidate solution $v$ solves in the viscosity sense the path-dependent heat equation (Theorem \ref{T:Existence}). We conclude Section \ref{S:Visc} proving the comparison theorem (Theorem \ref{T:Comparison}) and uniqueness (Corollary \ref{C:Uniqueness}).

\section{Pathwise derivatives and functional It\^o calculus}
\label{S:FunctItoCalc}

In the present section we define the pathwise derivatives and prove the functional It\^o formula under general assumptions.

\subsection{Maps on c\`adl\`ag paths}

Given $T>0$ and $d\in\N^*$, we denote by $D([0,T];\R^d)$ the set of c\`adl\`ag functions $\boldsymbol{\hat x}\colon[0,T]\rightarrow\R^d$. We denote by $\boldsymbol{\hat x}(t)$ the value of $\boldsymbol{\hat x}$ at $t\in[0,T]$. We also denote by $\boldsymbol 0$ the function $\boldsymbol{\hat x}\colon[0,T]\rightarrow\R^d$ identically equal to zero. We consider on $D([0,T];\R^d)$ the supremum norm $\|\cdot\|_\infty$, namely $\|\boldsymbol{\hat x}\|_\infty:=\sup_{t\in[0,T]}|\boldsymbol{\hat x}(t)|$, where $|\cdot|$ denotes the Euclidean norm on $\R^d$ (we use the same symbol $|\cdot|$ to denote the Euclidean norm on $\R^k$, for any $k\in\N$). We refer to Chapter V in \cite{Pollard} and to Section 15 of Chapter 3 in \cite{Billingsley} for a study of the set of c\`adl\`ag functions endowed with the uniform metric and a comparison with the Skorokhod space.

We set $\boldsymbol{\hat\Lambda}:=[0,T]\times D([0,T];\R^d)$ and define $\hat d_\infty\colon\boldsymbol{\hat\Lambda}\times\boldsymbol{\hat\Lambda}\rightarrow[0,\infty)$ as 
\[
\hat d_\infty\big((t,\boldsymbol{\hat x}),(t',\boldsymbol{\hat x}')\big) \ := \ |t - t'| + \big\|\boldsymbol{\hat x}(\cdot\wedge t) - \boldsymbol{\hat x}'(\cdot\wedge t')\big\|_\infty.
\]
Notice that $\hat d_\infty$ is a \emph{pseudometric} on $\boldsymbol{\hat\Lambda}$, that is $\hat d_\infty$ is not a true metric because one may have $\hat d_\infty((t,\boldsymbol{\hat x}),(t',\boldsymbol{\hat x}'))=0$ even if $(t,\boldsymbol{\hat x})\neq(t',\boldsymbol{\hat x}')$. We recall that one can construct a true metric space $(\boldsymbol{\hat\Lambda}^{\!*},\hat d_\infty^{\,*})$, called the metric space induced by the pseudometric space $(\boldsymbol{\hat\Lambda},\hat d_\infty)$, by means of the equivalence relation which follows from the vanishing of the pseudometric. We also observe that $(\boldsymbol{\hat\Lambda},\hat d_\infty)$ is a complete pseudometric space. Finally, we denote by $\mathcal B(\boldsymbol{\hat\Lambda})$ the Borel $\sigma$-algebra on $\boldsymbol{\hat\Lambda}$ induced by $\hat d_\infty$.

\begin{defn}
A map (or functional) $\hat u\colon\boldsymbol{\hat\Lambda}\rightarrow\R$ is said to be \textbf{non-anticipative} (on $\boldsymbol{\hat\Lambda}$) if it satisfies
\[
\hat u(t,\boldsymbol{\hat x}) \ = \ \hat u(t,\boldsymbol{\hat x}(\cdot\wedge t)),
\]
for all $(t,\boldsymbol{\hat x})\in\boldsymbol{\hat\Lambda}$.
\end{defn}

\begin{rem}
(i) The property of being non-anticipative is crucial and automatically true if the map $\hat u\colon\boldsymbol{\hat\Lambda}\rightarrow\R$ is continuous with respect to $\hat d_\infty$.\\
(ii) More generally, it holds that whenever $\hat u\colon\boldsymbol{\hat\Lambda}\rightarrow\R$ is Borel measurable, namely $\hat u$ is measurable with respect to $\mathcal B(\boldsymbol{\hat\Lambda})$, then $\hat u$ is non-anticipative on $\boldsymbol{\hat\Lambda}$. As a matter of fact, notice that every open subset $B$ of $\boldsymbol{\hat\Lambda}$, endowed with $\hat d_\infty$, satisfies the following property: if $(t,\boldsymbol{\hat x})\in B$ then $(t,\boldsymbol{\hat x}(\cdot\wedge t))\in B$ (this follows from the fact that $\hat d_\infty((t,\boldsymbol{\hat x}),(t,\boldsymbol{\hat x}(\cdot\wedge t)))=0$). As a consequence, by a monotone class argument, the same property holds true for every Borel subset of $\boldsymbol{\hat\Lambda}$. Now, let $\hat u\colon\boldsymbol{\hat\Lambda}\rightarrow\R$ be Borel measurable. For every $(t,\boldsymbol{\hat x})\in\boldsymbol{\hat\Lambda}$, denote
\[
B_{\hat u(t,\boldsymbol{\hat x})} \ := \ \big\{(s,\boldsymbol{\hat y})\in\boldsymbol{\hat\Lambda}\colon\hat u(s,\boldsymbol{\hat y})=\hat u(t,\boldsymbol{\hat x})\big\}.
\]
Notice that $B_{\hat u(t,\boldsymbol{\hat x})}\in\mathcal B(\boldsymbol{\hat\Lambda})$ and since $(t,\boldsymbol{\hat x})\in B_{\hat u(t,\boldsymbol{\hat x})}$ we deduce that $(t,\boldsymbol{\hat x}(\cdot\wedge t))\in B_{\hat u(t,\boldsymbol{\hat x})}$. This means that $\hat u(t,\boldsymbol{\hat x}(\cdot\wedge t))=\hat u(t,\boldsymbol{\hat x})$, namely the map $\hat u$ is non-anticipative.
\end{rem}

\begin{defn}
We denote by $\boldsymbol C(\boldsymbol{\hat\Lambda})$ the set of maps $\hat u\colon\boldsymbol{\hat\Lambda}\rightarrow\R$ which are continuous on $\boldsymbol{\hat\Lambda}$ with respect to $\hat d_\infty$.
\end{defn}

\begin{defn}[\textbf{Pathwise derivatives}]\label{D:FunctionalDerivatives}
Let $\hat u\colon\boldsymbol{\hat\Lambda}\rightarrow\R$ be non-anticipative.
\begin{enumerate}[\upshape (i)]
\item Given $(t,\boldsymbol{\hat x})\in\boldsymbol{\hat\Lambda}$, with $t<T$, the \textbf{horizontal derivative} of $\hat u$ at $(t,\boldsymbol{\hat x})$ (if the corresponding limit exists) is defined as 
\[
\partial_t^H\hat u(t,\boldsymbol{\hat x}) \ := \ \lim_{\delta\rightarrow0^+}\frac{\hat u(t+\delta,\boldsymbol{\hat x}(\cdot\wedge t)) - \hat u(t,\boldsymbol{\hat x})}{\delta}.
\]
At $t=T$ the horizontal derivative is defined as
\[
\partial_t^H\hat u(T,\boldsymbol{\hat x}) \ := \ \lim_{t\rightarrow T^-} \partial_t^H\hat u(t,\boldsymbol{\hat x}).
\]
\item Given $(t,\boldsymbol{\hat x})\in\boldsymbol{\hat\Lambda}$, the \textbf{vertical derivatives} of first and second-order of $\hat u$ at $(t,\boldsymbol{\hat x})$ (if the corresponding limits exist) are defined as 
\begin{align*}
\partial_{x_i}^V\hat u(t,\boldsymbol{\hat x}) \ &:= \ \lim_{h\rightarrow0}\frac{\hat u(t,\boldsymbol{\hat x} + h\,\mathbf e_i\,1_{[t,T]}) - \hat u(t,\boldsymbol{\hat x})}{h}, \\
\partial_{x_i x_j}^V\hat u(t,\boldsymbol{\hat x}) \ &:= \ \partial_{x_j}^V(\partial_{x_i}^V\hat u)(t,\boldsymbol{\hat x}),
\end{align*}
where $\mathbf e_1,\ldots,\mathbf e_d$ is the standard orthonormal basis of $\R^d$.\\
Finally, we denote $\partial_{\boldsymbol x}^V\hat u=(\partial_{x_1}^V\hat u,\ldots,\partial_{x_d}^V\hat u)$ and $\partial_{\boldsymbol x\boldsymbol x}^V\hat u=(\partial_{x_i x_j}^V\hat u)_{i,j=1,\ldots,d}$.
\end{enumerate}
\end{defn}

\begin{defn}
We denote by $\boldsymbol C^{1,2}(\boldsymbol{\hat\Lambda})$ the set of $\hat u\in\boldsymbol C(\boldsymbol{\hat\Lambda})$ such that $\partial_t^H\hat u$, $\partial_{\boldsymbol x}^V\hat u$, $\partial_{\boldsymbol x\boldsymbol x}^V\hat u$ exist everywhere on $\boldsymbol{\hat\Lambda}$ and are continuous.
\end{defn}

For later use, we also introduce the following set of maps on c\`adl\`ag paths.

\begin{defn}
We denote by $\boldsymbol C^{0,2}(\boldsymbol{\hat\Lambda})$ the set of $\hat u\in\boldsymbol C(\boldsymbol{\hat\Lambda})$ such that $\partial_{\boldsymbol x}^V\hat u$, $\partial_{\boldsymbol x\boldsymbol x}^V\hat u$ exist everywhere on $\boldsymbol{\hat\Lambda}$ and are continuous.
\end{defn}

We can finally state the functional It\^o formula for maps on c\`adl\`ag paths, whose proof is reported in Appendix \ref{AppA}.

\begin{thm}\label{T:ItoLifted}
Let $\hat u\in\boldsymbol C^{1,2}(\boldsymbol{\hat\Lambda})$. Then, for every $d$-dimensional continuous semimartingale $\boldsymbol X=(\boldsymbol X_t)_{t\in[0,T]}$, where $\boldsymbol X=(X^1,\ldots,X^d)$, defined on some filtered probability space $(\Omega,{\mathcal F},({\mathcal F}_t)_{t\in[0,T]},\P)$, with $({\mathcal F}_t)_{t\in[0,T]}$ satisfying the usual conditions, the following \textbf{functional It\^o formula} holds:
\begin{align*}
\hat u(t,\boldsymbol X) \ &= \ \hat u(0,\boldsymbol X) + \int_0^t \partial_t^H\hat u(s,\boldsymbol X)\,ds + \frac{1}{2}\sum_{i,j=1}^d \int_0^t \partial_{x_i x_j}^V \hat u(s,\boldsymbol X)\,d[X^i,X^j]_s \\
&\quad \ + \sum_{i=1}^d \int_0^t \partial_{x_i}^V\hat u(s,\boldsymbol X)\,dX_s^i, \hspace{2.05cm} \text{for all }\,0\leq t\leq T,\,\,\P\text{-a.s.}
\end{align*}
\end{thm}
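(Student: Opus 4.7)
The plan is to establish the formula by a discretize--telescope--pass-to-the-limit strategy, combined with a preliminary localization step that removes any implicit boundedness requirement on $\hat u$ and its derivatives, in the spirit of the Russo-Vallois approach to classical It\^o's formula.

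First I would localize by introducing stopping times $\tau_n:=\inf\{s\in[0,T]:|X_s|\vee[X,X]_s\geq n\}\wedge T$, so that $\tau_n\uparrow T$ a.s.\ and the stopped process $X^{\tau_n}$ together with its quadratic variation is uniformly bounded. Consequently $(s,X^{\tau_n},X^{\tau_n}_s)_{s\in[0,T]}$ lies in a compact subset $K\subset\boldsymbol\Lambda\times\R^d$, on which $\hat u$ and $\partial_t^H\hat u,\partial_{\boldsymbol x}^V\hat u,\partial_{\boldsymbol x\boldsymbol x}^V\hat u$ are uniformly continuous and bounded by Heine--Cantor. Establishing the formula for $X^{\tau_n}$ and letting $n\to\infty$ (using continuity of It\^o and Stieltjes integrals under stopping) reduces the problem to this uniformly bounded setting.

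Next, fix a sequence of partitions $\pi_n=\{0=t_0<\dots<t_{N_n}=t\}$ with $|\pi_n|\to 0$, and telescope
\[
\hat u(t,X,X_t)-\hat u(0,X,X_0)=\sum_k\bigl[\hat u(t_{k+1},X,X_{t_{k+1}})-\hat u(t_k,X,X_{t_k})\bigr].
\]
Inserting the intermediate state $(t_{k+1},X^{t_k},X_{t_k})$ and using non-anticipativity, the $k$th summand decomposes as $H_k+V_k+P_k$ with
\begin{align*}
H_k&:=\hat u(t_{k+1},X^{t_k},X_{t_k})-\hat u(t_k,X^{t_k},X_{t_k}),\\
V_k&:=\hat u(t_{k+1},X^{t_k},X_{t_{k+1}})-\hat u(t_{k+1},X^{t_k},X_{t_k}),\\
P_k&:=\hat u(t_{k+1},X^{t_{k+1}},X_{t_{k+1}})-\hat u(t_{k+1},X^{t_k},X_{t_{k+1}}).
\end{align*}
The horizontal part $H_k$ equals $\int_{t_k}^{t_{k+1}}\partial_t^H\hat u(s,X^{t_k})\,ds$ by the fundamental theorem of calculus applied to the $C^1$ map $s\mapsto\hat u(s,X^{t_k},X_{t_k})$; summing and using continuity of $\partial_t^H\hat u$ together with dominated convergence gives $\sum_kH_k\to\int_0^t\partial_t^H\hat u(s,X)\,ds$. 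The vertical part $V_k$ is handled by applying the classical $C^2$ It\^o formula to $\mathbf y\mapsto\hat u(t_{k+1},X^{t_k},\mathbf y)$ along the semimartingale $X$ on $[t_k,t_{k+1}]$, producing integrals against $dX_s$ and $d[X,X]_s$ with integrands $\partial_{\boldsymbol x}^V\hat u(t_{k+1},X^{t_k},X_s)$ and $\partial_{\boldsymbol x\boldsymbol x}^V\hat u(t_{k+1},X^{t_k},X_s)$; summation combined with standard Riemann-sum convergence for It\^o and Stieltjes integrals, together with continuity of the derivatives on $K$, yields the stochastic and quadratic-variation integrals of the statement (with integrands correctly evaluated at $(s,X,X_s)$ in the limit).

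The main obstacle is the residual path-change sum $\sum_kP_k$. Term by term, $d_\infty((t_{k+1},X^{t_{k+1}}),(t_{k+1},X^{t_k}))=\mathrm{osc}(X;[t_k,t_{k+1}])$ is uniformly small in $k$ as $|\pi_n|\to 0$ by the a.s.\ uniform continuity of the semimartingale paths, hence each $|P_k|$ tends to $0$; however, the crude modulus-of-continuity estimate $\sum_k|P_k|\leq N_n\sup_k\omega_{\hat u}(\mathrm{osc}_k)$ need not vanish because $N_n\to\infty$. To close the argument, one must exploit the lifted-functional structure further: by non-anticipativity and the smoothness encoded in the horizontal and vertical derivatives of $\hat u$, each $P_k$ can be re-expressed as a finer-scale combination of horizontal increments over $[t_k,t_{k+1}]$, yielding a sum of order strictly smaller than $N_n\,\omega_{\hat u}(|\pi_n|)$ that vanishes in probability, in analogy with the residual-cancellation mechanism at the heart of the Russo-Vallois stochastic calculus via regularization. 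Once $\sum_kP_k\to 0$ in probability is established, combining the three limits yields the formula on the localized process, and the localization step finalizes the proof.
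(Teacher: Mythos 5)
Your proposal identifies the right difficulty but does not resolve it. Your telescoping decomposition $H_k+V_k+P_k$ is standard (it is essentially the Cont--Fourni\'e route), and the treatment of $H_k$ and $V_k$ is sound in outline. However, the residual path-change sum $\sum_k P_k$ is precisely the crux of the whole theorem, and the argument you give for its vanishing is not an argument. The claim that "by non-anticipativity and the smoothness encoded in the horizontal and vertical derivatives of $\hat u$, each $P_k$ can be re-expressed as a finer-scale combination of horizontal increments" has no content in this framework: the horizontal derivative only measures the effect of shifting $t$ with the path \emph{frozen}, and the vertical derivative only measures a jump at the present time. Neither gives any differential control over the difference $\hat u(t_{k+1},X,X_{t_{k+1}})-\hat u(t_{k+1},X^{t_k},X_{t_{k+1}})$, which is a change of the \emph{past path} on $(t_k,t_{k+1}]$ at fixed $t$ and fixed present value. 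In $\boldsymbol C^{1,2}(\boldsymbol\Lambda\times\R^d)$ there is no modulus-of-differentiability hypothesis in the path argument beyond continuity, so the crude bound $\sum_k|P_k|\leq N_n\,\omega_{\hat u}(\mathrm{osc}_k)$ that you correctly identify as insufficient is in fact all you have; the appeal to a "residual-cancellation mechanism" is a hope, not a proof.

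The paper sidesteps this obstacle entirely. It first proves the formula for \emph{strictly non-anticipative} (delayed) liftings, i.e., maps $\hat u_{\delta_0}$ for which $\hat u_{\delta_0}((t+\delta)\wedge T,\boldsymbol x,\boldsymbol x(t))=\hat u_{\delta_0}((t+\delta)\wedge T,\boldsymbol x(\cdot\wedge t),\boldsymbol x(t))$ for $\delta\leq\delta_0$ (Definition \ref{D:StrictNA}). In the regularization increment $\hat u((s+\eps)\wedge T,X,X_{(s+\eps)\wedge T})-\hat u(s,X,X_s)$, with $\eps\leq\delta_0$ the "horizontal" piece $I_2(s,\eps)$ automatically has the path frozen at time $s$, so there is \emph{no} residual path-change term to estimate; the fundamental theorem of calculus (Lemma \ref{L:C1,0}) applies directly, while the vertical piece is handled by a second-order Taylor expansion as in classical Russo--Vallois calculus (Proposition \ref{P:Ito}). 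The general case is then obtained by approximating $\hat u$ by the delayed maps $\hat u_\delta(t,\boldsymbol x,\mathbf y)=\hat u(t,\boldsymbol x(\cdot\wedge(t-\delta)^+),\mathbf y)$ and letting $\delta\to0^+$, which requires only the continuity of the pathwise derivatives, not any estimate on a partition sum. If you want to salvage your approach, you would either need an additional regularity hypothesis on $\hat u$ in the path variable, or you would have to import the delay trick, at which point the partition is superfluous and the regularization proof is cleaner.
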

\begin{proof}
See Appendix \ref{AppA}.
\end{proof}

\subsection{Maps on continuous paths}
\label{SubS:FunctionalDerivativesCont}

Let $C([0,T];\R^d)$ denote the set of continuous functions $\boldsymbol x\colon[0,T]\rightarrow\R^d$. Notice that $ C([0,T];\R^d)$ is a subset of $ D([0,T];\R^d)$. We set ${\boldsymbol\Lambda}:=[0,T]\times C([0,T];\R^d)$ and denote $\boldsymbol d_\infty$ the restriction of $\boldsymbol{\hat d}_\infty$ to ${\boldsymbol\Lambda}\times{\boldsymbol\Lambda}$. Then, $\boldsymbol d_\infty$ is a pseudometric on ${\boldsymbol\Lambda}$ and $({\boldsymbol\Lambda},\boldsymbol d_\infty)$ is a complete pseudometric space. We denote by $\mathcal B({\boldsymbol\Lambda})$ the Borel $\sigma$-algebra on ${\boldsymbol\Lambda}$ induced by $\boldsymbol d_\infty$.

\begin{defn}\label{D:Lift}
Let $\hat u\colon\boldsymbol{\hat\Lambda}\rightarrow\R$ be non-anticipative and consider $u\colon\boldsymbol\Lambda\rightarrow\R$. We say that $\hat u$ is \textbf{consistent} with $u$ if
\[
u(t,\boldsymbol x) \ = \ \hat u(t,\boldsymbol x),
\]
for all $(t,\boldsymbol x)\in\boldsymbol\Lambda$.
\end{defn}

The following consistency property is crucial as it implies that, given $u$ admitting two maps $\hat u_1$ and $\hat u_2$, both being consistent with $u$, their pathwise derivatives coincide on continuous paths (see also Remark \ref{R:Consistency}).

\begin{lem}\label{L:Consistency}
If $\hat u_1,\hat u_2\in\boldsymbol C^{1,2}(\boldsymbol{\hat\Lambda})$ satisfy
\[
\hat u_1(t,\boldsymbol x) \ = \ \hat u_2(t,\boldsymbol x), \qquad \forall\,(t,\boldsymbol x)\in\boldsymbol\Lambda,
\]
then
\begin{align*}
\partial_t^H\hat u_1(t,\boldsymbol x) \ &= \ \partial_t^H\hat u_2(t,\boldsymbol x), \\
\partial^V_{\boldsymbol x} \hat u_1(t,\boldsymbol x) \ &= \ \partial^V_{\boldsymbol x} \hat u_2(t,\boldsymbol x), \\
\partial^V_{\boldsymbol x\boldsymbol x} \hat u_1(t,\boldsymbol x) \ &= \ \partial^V_{\boldsymbol x\boldsymbol x} \hat u_2(t,\boldsymbol x),
\end{align*}
for all $(t,\boldsymbol x)\in\boldsymbol\Lambda$.
\end{lem}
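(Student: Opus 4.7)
Set $\hat d := \hat u_1 - \hat u_2 \in \boldsymbol C^{1,2}(\boldsymbol\Lambda\times\R^d)$; the hypothesis then reads $\hat d(t, \boldsymbol x, \boldsymbol x(t)) = 0$ for every $(t, \boldsymbol x) \in \boldsymbol\Lambda$. My plan is to show that each pathwise derivative of $\hat d$ vanishes at such diagonal points. The horizontal case is immediate from the definition: since $[\boldsymbol x(\cdot\wedge t)](t+\delta) = \boldsymbol x(t)$, the value $\hat d(t+\delta, \boldsymbol x(\cdot\wedge t), \boldsymbol x(t))$ equals $\hat d(t+\delta, \boldsymbol x(\cdot\wedge t), [\boldsymbol x(\cdot\wedge t)](t+\delta))$, which is a diagonal evaluation and hence $0$; similarly $\hat d(t, \boldsymbol x, \boldsymbol x(t)) = 0$. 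Thus every difference quotient defining $\partial_t^H \hat d(t, \boldsymbol x)$ vanishes, yielding $\partial_t^H \hat u_1 = \partial_t^H \hat u_2$ on $[0,T)\times C([0,T];\R^d)$.

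The vertical derivatives are more delicate because $\hat d(t, \boldsymbol x, \boldsymbol x(t) + h\mathbf e_i)$ is an off-diagonal evaluation when $h \neq 0$, to which the hypothesis does not apply. I plan to invoke the functional It\^o formula (Theorem \ref{T:ItoLifted}) for $\hat d$ along a continuous semimartingale $X$ that coincides with $\boldsymbol x$ on $[0,t]$ and evolves suitably on $(t, T]$. To guarantee $\boldsymbol x$ itself is a semimartingale on $[0,T]$, I first suppose $\boldsymbol x \in C^1([0,T];\R^d)$ and recover the general continuous case by density and continuity of the derivatives. For the first-order derivative in direction $\mathbf e_i$ I take the deterministic bounded-variation process
\[
X_s \ := \ \boldsymbol x(s)\,\mathbf 1_{[0,t]}(s) \,+\, \bigl(\boldsymbol x(t) + (s-t)\mathbf e_i\bigr)\,\mathbf 1_{(t,T]}(s).
\]
Since $X$ is continuous the hypothesis gives $\hat d(s, X, X_s) = 0$ identically, and Theorem \ref{T:ItoLifted} applied on $[0, t+\eps]$ (there is no quadratic variation) produces
\[
0 \ = \ \int_0^{t+\eps}\!\partial_t^H\hat d(s, X)\,ds \,+\, \int_0^t \partial_{\boldsymbol x}^V\hat d(s, X, X_s)\cdot d\boldsymbol x(s) \,+\, \int_t^{t+\eps}\!\partial_{x_i}^V\hat d(s, X, X_s)\,ds.
\]
The second summand is $\eps$-independent; differentiating in $\eps$ at $\eps = 0^+$ and using continuity of the integrands gives $\partial_t^H\hat d(t, \boldsymbol x) + \partial_{x_i}^V\hat d(t, \boldsymbol x, \boldsymbol x(t)) = 0$, so $\partial_{x_i}^V\hat d(t, \boldsymbol x, \boldsymbol x(t)) = 0$ by the previous step. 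For the second-order derivative I fix an arbitrary $\sigma \in \R^{d\times d}$ and, on a filtered probability space carrying a Brownian motion $W$, replace the drift piece by $\sigma(W_s - W_t)$ on $(t, T]$. It\^o's formula, followed by taking expectation (with a standard localization of $X$ to annihilate the stochastic integral) and $\eps$-differentiation at $0^+$, yields
\[
0 \ = \ \partial_t^H\hat d(t, \boldsymbol x) \,+\, \tfrac{1}{2}\,\textup{tr}\!\bigl[\sigma\sigma\trans\,\partial_{\boldsymbol x\boldsymbol x}^V\hat d(t, \boldsymbol x, \boldsymbol x(t))\bigr].
\]
Varying $\sigma$ over $\R^{d\times d}$, the matrix $\sigma\sigma\trans$ spans all positive semi-definite symmetric matrices, and $\partial_{\boldsymbol x\boldsymbol x}^V\hat d$ is symmetric by Schwarz's theorem (guaranteed by $\boldsymbol C^{0,2}$ regularity); combined with the horizontal vanishing this forces $\partial_{\boldsymbol x\boldsymbol x}^V\hat d(t, \boldsymbol x, \boldsymbol x(t)) = 0$.

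To close, $C^1$ paths are dense in $C([0,T];\R^d)$ under the supremum norm, and continuity of $\partial_{\boldsymbol x}^V\hat d$, $\partial_{\boldsymbol x\boldsymbol x}^V\hat d$ on $\boldsymbol\Lambda\times\R^d$ transfers the identities from $C^1$ to all continuous paths; the boundary case $t = T$ follows by letting $t_n \to T^-$ and passing to the limit. I expect the principal obstacle to be precisely this smoothness reduction: for a continuous $\boldsymbol x$ of unbounded variation the path is not a semimartingale on $[0,t]$, so Theorem \ref{T:ItoLifted} cannot be applied globally on $[0,T]$, and the density approximation is what rescues the argument. A secondary technical point is the interchange of $\eps$-differentiation with expectation in the second-order step, which is handled by a standard localization of the semimartingale $X$.
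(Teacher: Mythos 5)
Your proof is correct in substance and, like the paper's, hinges on the functional It\^o formula (Theorem \ref{T:ItoLifted}), but the route through it is genuinely different. The paper's proof applies the It\^o expansion to $\hat u_1$ and $\hat u_2$ along an arbitrary continuous semimartingale $X$, subtracts, and invokes the uniqueness of the decomposition of a semimartingale into bounded-variation and local-martingale parts to conclude that the vertical derivatives of the difference vanish along the trajectories of $X$; it then selects a specific $X$ whose law has full support on paths stopped at $t$ (namely $X_s=\eta+\boldsymbol W_{s\wedge t}$ with $\eta$ of full support and independent of $\boldsymbol W$) and invokes continuity of the derivatives to pass from $\P$-a.s.\ trajectories of $X$ to all of $\boldsymbol\Lambda$. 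You instead fix $\boldsymbol x$ and \emph{construct} a semimartingale tailored to it (deterministic with a linear ramp in direction $\mathbf e_i$ for the first-order derivative; Brownian with diffusion coefficient $\sigma$ for the second-order one), apply It\^o on $[0,t+\eps]$, and differentiate at $\eps=0^+$; the price is restricting to $C^1$ paths so that $X$ is a semimartingale on $[0,t]$, which you then remove by density of $C^1$ paths in $(C([0,T];\R^d),\|\cdot\|_\infty)$ and continuity of $\partial^V_{\boldsymbol x}\hat d,\partial^V_{\boldsymbol x\boldsymbol x}\hat d$. Both routes thus end with a continuity-plus-density-type extension, just implemented differently (full-support support of a law versus density of $C^1$ paths). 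Your proof has the minor virtue of being more explicit about the spanning step for the Hessian (varying $\sigma\sigma\trans$ over all positive semi-definite matrices and invoking Schwarz symmetry, which is available since $\hat d\in\boldsymbol C^{0,2}$), a point the paper's ``identifying bounded variation and local martingale parts'' compresses; conversely, the paper avoids the $C^1$-density and $\eps$-differentiation machinery entirely. The two technical points you flag — semimartingale regularity of the frozen past, handled by $C^1$ density, and interchange of the $\eps$-limit with expectation, handled by stopping $X$ on exit from a ball around $\boldsymbol x(t)$ so the integrands are bounded — are the right ones and are manageable with standard arguments. The $t=T$ case by taking $t_n\uparrow T$ is also fine, since $(t_n,\boldsymbol x,\boldsymbol x(t_n))\to(T,\boldsymbol x,\boldsymbol x(T))$ in the product topology by continuity of $\boldsymbol x$.
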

\begin{proof}
See Appendix \ref{AppB}.
\end{proof}

Thanks to Lemma \ref{L:Consistency} we can now give the following definition (see also Remark \ref{R:Consistency}).

\begin{defn}\label{C^1,2(Lambda)}
Let $u\colon\boldsymbol\Lambda\rightarrow\R$. We say that $u\in\boldsymbol C^{1,2}(\boldsymbol\Lambda)$ if there exists $\hat u\colon\boldsymbol{\hat\Lambda}\rightarrow\R$ consistent with $u$ and satisfying $\hat u\in\boldsymbol  C^{1,2}(\boldsymbol{\hat\Lambda})$. Moreover, we define
\begin{align*}
\partial_t^H u(t,\boldsymbol x) \ &:= \ \partial_t^H\hat u(t,\boldsymbol x), \\
\partial^V_{\boldsymbol x} u(t,\boldsymbol x) \ &:= \ \partial^V_{\boldsymbol x} \hat u(t,\boldsymbol x), \\
\partial^V_{\boldsymbol x\boldsymbol x} u(t,\boldsymbol x) \ &:= \ \partial^V_{\boldsymbol x\boldsymbol x} \hat u(t,\boldsymbol x),
\end{align*}
for all $(t,\boldsymbol x)\in\boldsymbol\Lambda$.
\end{defn}

\begin{rem}\label{R:Consistency}
Notice that, by Lemma \ref{L:Consistency}, if $u\in\boldsymbol  C^{1,2}(\boldsymbol\Lambda)$ then the definition of the pathwise derivatives of $u$ is independent of the map $\hat u\in\boldsymbol C^{1,2}(\boldsymbol{\hat\Lambda})$ consistent with $u$.
\end{rem}

\begin{thm}\label{T:Ito}
Let $u\in\boldsymbol  C^{1,2}(\boldsymbol\Lambda)$. Then, for every $d$-dimensional continuous semimartingale $\boldsymbol X=(\boldsymbol X_t)_{t\in[0,T]}$, where $\boldsymbol X=(X^1,\ldots,X^d)$, defined on some filtered probability space $(\Omega,{\mathcal F},({\mathcal F}_t)_{t\in[0,T]},\P)$, with $({\mathcal F}_t)_{t\in[0,T]}$ satisfying the usual conditions, the following \textbf{functional It\^o formula} holds:
\begin{align}
u(t,\boldsymbol X) \ &= \ u(0,\boldsymbol X) + \int_0^t \partial_t^H u(s,\boldsymbol X)\,ds + \frac{1}{2}\sum_{i,j=1}^d \int_0^t \partial^V_{x_i x_j} u(s,\boldsymbol X)\,d[X^i,X^j]_s \label{Ito_formula} \\
&\quad \ + \sum_{i=1}^d \int_0^t \partial^V_{x_i}u(s,\boldsymbol X)\,dX_s^i, \qquad\qquad\quad\;\;\ \text{for all }\,0\leq t\leq T,\,\,\P\text{-a.s.} \notag
\end{align}
\end{thm}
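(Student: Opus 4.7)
The plan is to reduce Theorem \ref{T:Ito} to the already-proved lifted version, Theorem \ref{T:ItoLifted}, by invoking Definition \ref{C^1,2(Lambda)} together with the well-definedness ensured by Lemma \ref{L:Consistency} and Remark \ref{R:Consistency}.

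First, since $u\in\boldsymbol C^{1,2}(\boldsymbol\Lambda)$, Definition \ref{C^1,2(Lambda)} provides a lifting $\hat u\colon\boldsymbol\Lambda\times\R^d\rightarrow\R$ belonging to $\boldsymbol C^{1,2}(\boldsymbol\Lambda\times\R^d)$. I would fix any such lifting; by Lemma \ref{L:Consistency} and Remark \ref{R:Consistency}, the pathwise derivatives of $u$ that appear in \eqref{Ito_formula} are unambiguously defined regardless of which lifting is chosen.

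Next, I would apply Theorem \ref{T:ItoLifted} to $\hat u$ and to the given $d$-dimensional continuous semimartingale $X=(X^1,\ldots,X^d)$ on $(\Omega,\mathcal F,(\mathcal F_t)_{t\in[0,T]},\P)$. This yields, for all $t\in[0,T]$, $\P$-a.s.,
\begin{align*}
\hat u(t,X,X_t) \ &= \ \hat u(0,X,X_0) + \int_0^t \partial_t^H\hat u(s,X)\,ds + \frac{1}{2}\sum_{i,j=1}^d \int_0^t \partial_{x_i x_j}^V \hat u(s,X,X_s)\,d[X^i,X^j]_s \\
&\quad + \sum_{i=1}^d \int_0^t \partial_{x_i}^V\hat u(s,X,X_s)\,dX_s^i.
\end{align*}
Then I would rewrite every term by unfolding the lifting relation $u(s,\boldsymbol x)=\hat u(s,\boldsymbol x,\boldsymbol x(s))$ pointwise in $(s,\boldsymbol x)$, applied along the trajectories of $X$: the left-hand side becomes $u(t,X)$, the initial value becomes $u(0,X)$, and by Definition \ref{C^1,2(Lambda)} the integrands transform into $\partial_t^H u(s,X)$, $\partial_{x_i x_j}^V u(s,X)$, and $\partial_{x_i}^V u(s,X)$, respectively. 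This directly yields \eqref{Ito_formula}.

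There is essentially no genuine obstacle once Theorem \ref{T:ItoLifted} is in hand; the only subtlety worth checking is that the stochastic and Lebesgue--Stieltjes integrals in the lifted formula, which are built from $\hat u$ evaluated along $(s,X,X_s)$, are pathwise equal to the corresponding integrals written in terms of the unlifted derivatives of $u$. This is immediate from the pointwise identities in Definition \ref{C^1,2(Lambda)}, and the continuity properties inherited from $\hat u\in\boldsymbol C^{1,2}(\boldsymbol\Lambda\times\R^d)$ ensure the relevant processes are adapted and have the measurability/integrability needed to make the substitution rigorous. Hence the main work of this theorem has been discharged in the proofs of Theorem \ref{T:ItoLifted} and Lemma \ref{L:Consistency}.
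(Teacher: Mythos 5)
Your proof is correct and is essentially the same argument as in the paper: pick a lifting $\hat u\in\boldsymbol C^{1,2}(\boldsymbol\Lambda\times\R^d)$ from Definition~\ref{C^1,2(Lambda)}, invoke Theorem~\ref{T:ItoLifted} for $\hat u$, and then identify the pathwise derivatives of $\hat u$ with those of $u$ via Definition~\ref{C^1,2(Lambda)}. The added appeal to Lemma~\ref{L:Consistency} for lifting-independence is a reasonable remark but not strictly needed once a single lifting is fixed.
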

\begin{proof}
Since $u\in\boldsymbol  C^{1,2}(\boldsymbol\Lambda)$, by Definition \ref{C^1,2(Lambda)} there exists a map $\hat u\colon\boldsymbol{\hat\Lambda}\rightarrow\R$ consistent with $u$ and satisfying $\hat u\in\boldsymbol C^{1,2}(\boldsymbol{\hat\Lambda})$. Then, by Theorem \ref{T:ItoLifted}, the following functional It\^o formula holds:
\begin{align*}
\hat u(t,\boldsymbol X) \ &= \ \hat u(0,\boldsymbol X) + \int_0^t \partial_t^H\hat u(s,\boldsymbol X)\,ds + \frac{1}{2}\sum_{i,j=1}^d \int_0^t \partial^V_{x_i x_j} \hat u(s,\boldsymbol X)\,d[X^i,X^j]_s \\
&\quad \ + \sum_{i=1}^d \int_0^t \partial^V_{x_i}\hat u(s,\boldsymbol X)\,dX_s^i, \hspace{2.1cm} \text{for all }\,0\leq t\leq T,\,\,\P\text{-a.s.}
\end{align*}
The claim follows identifying the pathwise derivatives of $\hat u$ with those of $u$.
\end{proof}

\section{Smooth variational principle on $\boldsymbol\Lambda$}
\label{S:Smoothing}

The goal of the present section is the proof of a smooth variational principle on $\boldsymbol\Lambda$, which plays a crucial role in the proof of the comparison theorem (Theorem \ref{T:Comparison}). To this end, we begin recalling a generalization of the so-called Borwein-Preiss smooth variant (\cite{BP87}) of Ekeland's variational principle (\cite{Ekeland}), corresponding to Theorem \ref{T:Ekeland} below. We state it for the case of real-valued (rather than $\mathbb R\cup\{+\infty\}$-valued as in \cite{BZ05}) maps on $\boldsymbol\Lambda$. We firstly recall the definition of gauge-type function for the specific set $\boldsymbol\Lambda$.

\begin{defn}\label{D:Ekeland}
We say that $\Psi\colon\boldsymbol\Lambda\times\boldsymbol\Lambda\rightarrow[0,+\infty)$ is a \textbf{gauge-type function} provided that the properties below hold:
\begin{enumerate}[\upshape a)]
\item $\Psi$ is continuous on $\boldsymbol\Lambda\times\boldsymbol\Lambda$;
\item $\Psi((t,\boldsymbol x),(t,\boldsymbol x))=0$, for every $(t,\boldsymbol x)\in\boldsymbol\Lambda$;
\item for every $\eps>0$ there exists $\eta>0$ such that, for all $(t',\boldsymbol x'),(t'',\boldsymbol x'')\in\boldsymbol\Lambda$, the inequality $\Psi((t',\boldsymbol x'),(t'',\boldsymbol x''))\leq\eta$ implies $d_\infty((t',\boldsymbol x'),(t'',\boldsymbol x''))<\eps$.
\end{enumerate}
\end{defn}

\begin{thm}\label{T:Ekeland}
  Let $G\colon\boldsymbol\Lambda\rightarrow\R$ be an upper semicontinuous map, bounded from above. Suppose that $\Psi\colon\boldsymbol\Lambda\times\boldsymbol\Lambda\rightarrow[0,+\infty)$ is a gauge-type function (according to Definition \ref{D:Ekeland}) and $\{\delta_n\}_{n\geq0}$ is a sequence of strictly positive real numbers. For every $\eps>0$, let $(t_0,\boldsymbol x_0)\in\boldsymbol\Lambda$
  such that
\[
\sup G - \eps \ \leq \ G(t_0,\boldsymbol x_0).
\]
Then, there exists a sequence $\{(t_n,\boldsymbol x_n)\}_{n\geq1}\subset\boldsymbol\Lambda$ which converges to some $(\bar t,\boldsymbol{\bar x})\in\boldsymbol\Lambda$ satisfying the following properties.
\begin{enumerate}[\upshape i)]
\item $\Psi((\bar t,\boldsymbol{\bar x}),(t_n,\boldsymbol x_n))\leq\frac{\eps}{2^n\delta_0}$, for every $n\geq0$.
\item $G(t_0,\boldsymbol x_0)\leq G(\bar t,\boldsymbol{\bar x}) - \sum_{n=0}^{+\infty} \delta_n\,\Psi((\bar t,\boldsymbol{\bar x}),(t_n,\boldsymbol x_n))$.
\item For every $(t,\boldsymbol x)\neq(\bar t,\boldsymbol{\bar x})$,
\[
G(t,\boldsymbol x) - \sum_{n=0}^{+\infty} \delta_n\,\Psi\big((t,\boldsymbol x),(t_n,\boldsymbol x_n)\big) \ < \ G(\bar t,\boldsymbol{\bar x}) - \sum_{n=0}^{+\infty} \delta_n\,\Psi\big((\bar t,\boldsymbol{\bar x}),(t_n,\boldsymbol x_n)\big).
\]
\end{enumerate}
\end{thm}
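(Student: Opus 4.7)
The plan is to adapt the classical iterative construction of Borwein--Preiss to the complete pseudometric setting $(\boldsymbol\Lambda, d_\infty)$, recovering $(\bar t, \boldsymbol{\bar x})$ as a point in the intersection of a decreasing sequence of nonempty closed sets whose $d_\infty$-diameter vanishes. Starting from the given $(t_0, \boldsymbol x_0)$, I would define the perturbed functionals and super-level sets
\[
F_n(t, \boldsymbol x) := G(t, \boldsymbol x) - \sum_{k=0}^n \delta_k\,\Psi\bigl((t, \boldsymbol x), (t_k, \boldsymbol x_k)\bigr), \qquad
S_n := \bigl\{(t, \boldsymbol x) \in \boldsymbol\Lambda : F_n(t, \boldsymbol x) \geq F_n(t_n, \boldsymbol x_n)\bigr\}.
\]
Iteratively, having chosen $(t_0, \boldsymbol x_0), \ldots, (t_n, \boldsymbol x_n)$, I would pick $(t_{n+1}, \boldsymbol x_{n+1}) \in S_n$ satisfying $F_n(t_{n+1}, \boldsymbol x_{n+1}) \geq \sup_{S_n} F_n - \eps\,\delta_{n+1}/(2^{n+1}\delta_0)$, which is possible since $F_n$ is bounded above on $\boldsymbol\Lambda$ (by $\sup G < \infty$ and $\Psi \geq 0$). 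The near-supremum tolerance is calibrated exactly so that the $\delta_{n+1}$ factor cancels in the forthcoming diameter estimate.

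Next I would verify four structural properties of $\{S_n\}$: each $S_n$ is nonempty (it contains $(t_n, \boldsymbol x_n)$ because $\Psi((t_n, \boldsymbol x_n), (t_n, \boldsymbol x_n)) = 0$), closed (by upper semicontinuity of $G$ and continuity of $\Psi$, $F_n$ is upper semicontinuous), nested ($S_{n+1} \subseteq S_n$, using $(t_{n+1}, \boldsymbol x_{n+1}) \in S_n$ together with $\Psi \geq 0$ to chain the defining inequalities), and most importantly of vanishing $d_\infty$-diameter. For the last property, if $(t, \boldsymbol x) \in S_{n+1}$, nestedness gives $F_n(t, \boldsymbol x) \leq \sup_{S_n} F_n \leq F_n(t_{n+1}, \boldsymbol x_{n+1}) + \eps\,\delta_{n+1}/(2^{n+1}\delta_0)$, while $F_{n+1}(t, \boldsymbol x) \geq F_{n+1}(t_{n+1}, \boldsymbol x_{n+1}) = F_n(t_{n+1}, \boldsymbol x_{n+1})$ rearranges to
\[
\delta_{n+1}\,\Psi\bigl((t, \boldsymbol x), (t_{n+1}, \boldsymbol x_{n+1})\bigr) \;\leq\; F_n(t, \boldsymbol x) - F_n(t_{n+1}, \boldsymbol x_{n+1}) \;\leq\; \frac{\eps\,\delta_{n+1}}{2^{n+1}\delta_0}.
\]
Dividing by $\delta_{n+1}$ gives $\Psi((t, \boldsymbol x), (t_{n+1}, \boldsymbol x_{n+1})) \leq \eps/(2^{n+1}\delta_0)$ uniformly on $S_{n+1}$, and the analogous bound at level $n = 0$ follows from the hypothesis $\sup G - \eps \leq G(t_0, \boldsymbol x_0)$. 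By property (c) of the gauge-type function this forces $d_\infty\text{-}\mathrm{diam}(S_n) \to 0$; in particular $\{(t_n, \boldsymbol x_n)\}$ is $d_\infty$-Cauchy.

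Completeness of $(\boldsymbol\Lambda, d_\infty)$ then yields a limit $(\bar t, \boldsymbol{\bar x})$, and closedness of each $S_n$ places it in $\bigcap_n S_n$. Property (i) is the uniform estimate applied at $(\bar t, \boldsymbol{\bar x})$. Property (ii) follows by telescoping: $(t_n, \boldsymbol x_n) \in S_{n-1}$ gives $F_{n-1}(t_n, \boldsymbol x_n) \geq F_{n-1}(t_{n-1}, \boldsymbol x_{n-1}) \geq \cdots \geq F_0(t_1, \boldsymbol x_1) \geq G(t_0, \boldsymbol x_0)$, and combining with $(\bar t, \boldsymbol{\bar x}) \in S_n$ and letting $n \to \infty$ by monotone convergence of the nonnegative gauge sum yields the claim. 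For property (iii), setting $\Phi(t, \boldsymbol x) := G(t, \boldsymbol x) - \sum_{k=0}^\infty \delta_k \Psi((t, \boldsymbol x), (t_k, \boldsymbol x_k))$ and $L_n := F_n(t_n, \boldsymbol x_n)$, one checks that $L_n$ is non-decreasing with $L_n \leq F_n(\bar t, \boldsymbol{\bar x}) \downarrow \Phi(\bar t, \boldsymbol{\bar x})$, so $L_n \leq \Phi(\bar t, \boldsymbol{\bar x})$ for every $n$. If some $(t, \boldsymbol x)$ with $d_\infty((t, \boldsymbol x), (\bar t, \boldsymbol{\bar x})) > 0$ satisfied the reverse inequality, then $F_n(t, \boldsymbol x) \geq \Phi(t, \boldsymbol x) \geq \Phi(\bar t, \boldsymbol{\bar x}) \geq L_n$ would place $(t, \boldsymbol x)$ in every $S_n$; the uniform bound (i) and property (c) would then force $d_\infty((t, \boldsymbol x), (t_n, \boldsymbol x_n)) \to 0$, and the triangle inequality together with $(t_n, \boldsymbol x_n) \to (\bar t, \boldsymbol{\bar x})$ would give $d_\infty((t, \boldsymbol x), (\bar t, \boldsymbol{\bar x})) = 0$, a contradiction.

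The principal obstacle is the precise calibration of the near-supremum tolerance $\eps\,\delta_{n+1}/(2^{n+1}\delta_0)$: this specific choice is what makes the $\delta_{n+1}$ factor cancel in the diameter estimate and produces exactly the rate $\eps/(2^n\delta_0)$ asserted in (i), without requiring any growth restriction on $\{\delta_n\}$. A secondary delicate point is that $(\boldsymbol\Lambda, d_\infty)$ is only a pseudometric space, so the limit is determined only up to the equivalence $d_\infty = 0$, and the ``$(t, \boldsymbol x) \neq (\bar t, \boldsymbol{\bar x})$'' in (iii) must therefore be read as $d_\infty((t, \boldsymbol x), (\bar t, \boldsymbol{\bar x})) > 0$.
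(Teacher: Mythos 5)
Your proof is correct and reconstructs, in the pseudometric setting, the argument behind Theorem 2.5.2 in Borwein--Zhu \cite{BZ05}, which is all the paper's own proof invokes. Your remark that ``$\neq$'' in (iii) must be read as $d_\infty\bigl((t,\boldsymbol x),(\bar t,\boldsymbol{\bar x})\bigr)>0$ correctly isolates the one substantive point of the metric-to-pseudometric adaptation: an upper semicontinuous $G$ and a continuous $\Psi$ are necessarily constant on $d_\infty$-equivalence classes, so the strict inequality genuinely fails for $d_\infty$-equivalent points.
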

\begin{proof}
Theorem \ref{T:Ekeland} follows trivially from Theorem 2.5.2 in \cite{BZ05}, the only difference being that the latter result is stated on complete \emph{metric} spaces, while here $\boldsymbol\Lambda$ is a complete \emph{pseudometric} space.
\end{proof}

The main ingredient of Theorem \ref{T:Ekeland} is the gauge-type function $\Psi$. In the proof of the comparison theorem we need such a gauge-type function to be also smooth as a map of its first pair, namely $(t,\boldsymbol x)\mapsto\Psi((t,\boldsymbol x),(t_0,\boldsymbol x_0))$, and with bounded derivatives. The most important example of gauge-type function is the pseudometric $d_\infty$ itself, which unfortunately is not smooth enough. The major contribution of the present section is the construction of such a smooth gauge-type function with bounded derivatives, which corresponds to the function $\rho_\infty$ in \eqref{rho}. In order to do it, we perform a smoothing of the pseudometric $d_\infty$ itself (more precisely of the part concerning the supremum norm), first in the vertical direction, and then in the horizontal direction. In particular, the next result concerns the smoothing in the vertical direction. The precise form of the mollifier $\zeta$ in \eqref{zeta} is used to get explicit bounds on $\hat\kappa_\infty^{(t_0,\boldsymbol x_0)}$ and its derivatives.

\begin{lem}\label{L:rho_infty}
  Let $\zeta\colon\R^d\rightarrow\R$ be the probability density function of the standard normal multivariate distribution
\begin{equation}\label{zeta}
\zeta(\mathbf z) \ := \ \frac{1}{(2\pi)^{\frac{d}{2}}}\,\textup{e}^{-\frac{1}{2}|\mathbf z|^2}, \qquad \forall\,\mathbf z\in\R^d.
\end{equation}
For every fixed $(t_0,\boldsymbol x_0)\in\boldsymbol\Lambda$, define the map $\hat\kappa_\infty^{(t_0,\boldsymbol x_0)}\colon\boldsymbol{\hat\Lambda}\rightarrow\R$ as
\begin{equation}\label{kappa}
\hat\kappa_\infty^{(t_0,\boldsymbol x_0)}(t,\boldsymbol{\hat x}) \ := \ \int_{\R^d}\big\|\boldsymbol{\hat x}(\cdot\wedge t) - \boldsymbol x_0(\cdot\wedge t_0) - \mathbf z\,1_{[t,T]}\big\|_\infty\,\zeta(\mathbf z)\,d\mathbf z - \int_{\R^d}|\mathbf z|\,\zeta(\mathbf z)\,d\mathbf z,
\end{equation}
for all $(t,\boldsymbol{\hat x})\in\boldsymbol{\hat\Lambda}$. Moreover, let
$\kappa_\infty^{(t_0,\boldsymbol x_0)}\colon\boldsymbol\Lambda\rightarrow\R$ be given by
\[
\kappa_\infty^{(t_0,\boldsymbol x_0)}(t,\boldsymbol x) \ := \ \hat\kappa_\infty^{(t_0,\boldsymbol x_0)}(t,\boldsymbol x),
\]
for every $(t,\boldsymbol x)\in\boldsymbol\Lambda$.
Then, the following properties hold.
\begin{enumerate}[\upshape 1)]
\item For every $(t,\boldsymbol{\hat x})\in\boldsymbol{\hat\Lambda}$, the vertical derivatives of first and second-order of $\hat\kappa_\infty^{(t_0,\boldsymbol x_0)}$ at $(t,\boldsymbol{\hat x})$ (namely $\partial^V_{x_i}\hat\kappa_\infty^{(t_0,\boldsymbol x_0)}(t,\boldsymbol{\hat x})$ and $\partial^V_{x_i x_j}\hat\kappa_\infty^{(t_0,\boldsymbol x_0)}(t,\boldsymbol{\hat x})$, for every $i,j=1,\ldots,d$) exist.
\item For every $i,j=1,\ldots,d$, $\partial^V_{x_i}\hat\kappa_\infty^{(t_0,\boldsymbol x_0)}$ is bounded by the constant $1$ and $\partial^V_{x_i x_j}\hat\kappa_\infty^{(t_0,\boldsymbol x_0)}$ is bounded by the constant $\sqrt{\frac{2}{\pi}}$.
\item $\hat\kappa_\infty^{(t_0,\boldsymbol x_0)}\geq-C_\zeta$ and $\kappa_\infty^{(t_0,\boldsymbol x_0)}(t,\boldsymbol x)\geq\|\boldsymbol x(\cdot\wedge t)-\boldsymbol x_0(\cdot\wedge t_0)\|_\infty-C_\zeta$, for every $(t,\boldsymbol x)\in\boldsymbol\Lambda$, with
\begin{equation}\label{C_zeta}
C_\zeta \ := \ \int_{\R^d}|\mathbf z|\zeta(\mathbf z)\,d\mathbf z \ = \ \sqrt{2}\,\frac{\Gamma\big(\frac{d}{2} + \frac{1}{2}\big)}{\Gamma\big(\frac{d}{2}\big)} \ > \ 0,
\end{equation}
where $\Gamma(\cdot)$ is the Gamma function.
\item For every fixed $d$, there exists some constant $\alpha_d>0$ such that
\begin{align}\label{Ineq12}
\alpha_d\,\big(\|\boldsymbol x(\cdot\wedge t) - \boldsymbol x_0(\cdot\wedge t_0)\|_\infty^{d+1}\wedge\|&\boldsymbol x(\cdot\wedge t) - \boldsymbol x_0(\cdot\wedge t_0)\|_\infty\big) \\
&\leq \ \kappa_\infty^{(t_0,\boldsymbol x_0)}(t,\boldsymbol x) \ \leq \ \|\boldsymbol x(\cdot\wedge t) - \boldsymbol x_0(\cdot\wedge t_0)\|_\infty, \notag
\end{align}
for all $(t,\boldsymbol x)\in\boldsymbol\Lambda$. In particular, it holds that $\kappa_\infty^{(t_0,\boldsymbol x_0)}\geq0$.
\end{enumerate}
\end{lem}
\begin{proof}
See Appendix \ref{AppVar}, Section \ref{AppVar1}.
\end{proof}

We now address the problem of smoothing the map $\hat\kappa_\infty^{(t_0,\boldsymbol x_0)}$ in the horizontal direction. This is required by the fact that the presence of $1_{[t,T]}$ in the definition of $\hat\kappa_\infty^{(t_0,\boldsymbol x_0)}$ is an obstruction to horizontal regularity, therefore a further convolution in the time variable $t$ is needed. The latter convolution also provides the continuity on $\boldsymbol{\hat\Lambda}$ (notice that the map $(t,\boldsymbol{\hat x})\mapsto\hat\kappa_\infty^{(t_0,\boldsymbol x_0)}(t,\boldsymbol{\hat x})$ is not continuous on $\boldsymbol{\hat\Lambda}$, see Remark \ref{R:Example}).

We perform such a horizontal smoothing to $\hat\kappa_\infty^{(t_0,\boldsymbol x_0)}/(1+C_\zeta+\hat\kappa_\infty^{(t_0,\boldsymbol x_0)})$. We apply it to
such a map (rather than to $\hat\kappa_\infty^{(t_0,\boldsymbol x_0)}$ directly) in order to have bounded derivatives (see item 3 of Lemma \ref{L:Smoothing_rho_infty}). Moreover, we consider $1+C_\zeta+\hat\kappa_\infty^{(t_0,\boldsymbol x_0)}$ (instead of $1+\hat\kappa_\infty^{(t_0,\boldsymbol x_0)}$) in order to have a denominator greater than or equal to $1$ (this follows from inequality $\hat\kappa_\infty^{(t_0,\boldsymbol x_0)}\geq-C_\zeta$, see item 3 of Lemma \ref{L:rho_infty}). The precise form of the mollifier $\eta$ in \eqref{eta} is used to get explicit bounds on $\hat\chi_\infty^{(t_0,\boldsymbol x_0)}$ and its derivatives.

\begin{rem}[\cite{Gomoyunov}]\label{R:Example}
Notice that the map $(t,\boldsymbol{\hat x})\mapsto\hat\kappa_\infty^{(t_0,\boldsymbol x_0)}(t,\boldsymbol{\hat x})$ is not continuous on $\boldsymbol{\hat\Lambda}$. As a matter of fact, consider the following example. Take $d=1$, $T=2$, $t_0=0$, $\boldsymbol x_0\equiv0$, $t=1$, $\boldsymbol{\hat x}=1_{[1,2]}$. Then, it holds that
\begin{align*}
\hat\kappa_\infty^{(t_0,\boldsymbol x_0)}(t,\boldsymbol{\hat x}) \ &= \ \int_\R\big\|\boldsymbol{\hat x}(\cdot\wedge t) - \boldsymbol x_0(\cdot\wedge t_0) - \mathbf z\,1_{[t,T]}\big\|_\infty\,\zeta(\mathbf z)\,d\mathbf z - \int_\R|\mathbf z|\,\zeta(\mathbf z)\,d\mathbf z \\
&= \ \int_\R |1 - \mathbf z|\,\zeta(\mathbf z)\,d\mathbf z - \int_\R|\mathbf z|\,\zeta(\mathbf z)\,d\mathbf z.
\end{align*}
Now, take $\delta\in(0,1)$, then
\begin{align*}
\hat\kappa_\infty^{(t_0,\boldsymbol x_0)}(t+\delta,\boldsymbol{\hat x}) \ &= \ \int_\R\big\|\boldsymbol{\hat x}(\cdot\wedge(t+\delta)) - \boldsymbol x_0(\cdot\wedge t_0) - \mathbf z\,1_{[t+\delta,T]}\big\|_\infty\,\zeta(\mathbf z)\,d\mathbf z - \int_\R|\mathbf z|\,\zeta(\mathbf z)\,d\mathbf z \\
&= \ \int_\R \max\big\{1,|1 - \mathbf z|\big\}\,\zeta(\mathbf z)\,d\mathbf z - \int_\R|\mathbf z|\,\zeta(\mathbf z)\,d\mathbf z.
\end{align*}
In conclusion, we have
\begin{align*}
\big|\hat\kappa_\infty^{(t_0,\boldsymbol x_0)}(t+\delta,\boldsymbol{\hat x}) - \hat\kappa_\infty^{(t_0,\boldsymbol x_0)}(t,\boldsymbol{\hat x})\big| \ &= \ \int_\R \Big\{\max\big\{1,|1 - \mathbf z|\big\} - |1 - \mathbf z|\Big\}\,\zeta(\mathbf z)\,d\mathbf z \\
&= \ \int_0^2 \big(1 - |1 - \mathbf z|\big)\,\zeta(\mathbf z)\,d\mathbf z \ =: \ \eps_* \ > \ 0,
\end{align*}
where $\eps_*$ is a constant independent of $\delta$. This proves that $|\hat\kappa_\infty^{(t_0,\boldsymbol x_0)}(t+\delta,\boldsymbol{\hat x}) - \hat\kappa_\infty^{(t_0,\boldsymbol x_0)}(t,\boldsymbol{\hat x})|\not\rightarrow0$ as $\delta\rightarrow0^+$ and shows that $\hat\kappa_\infty^{(t_0,\boldsymbol x_0)}$ is not continuous on $\boldsymbol{\hat\Lambda}$.
\end{rem}

\begin{lem}\label{L:Smoothing_rho_infty}
Let $\eta\colon\R\rightarrow\R$ be given by
\begin{equation}\label{eta}
\eta(s) \ := \ s\,\textup{e}^{-s}, \qquad \forall\,s\in\R.
\end{equation}
For every fixed $(t_0,\boldsymbol x_0)\in\boldsymbol\Lambda$, let $\hat\kappa_\infty^{(t_0,\boldsymbol x_0)}$ be the map defined in Lemma \ref{L:rho_infty} and define the map $\hat\chi_\infty^{(t_0,\boldsymbol x_0)}\colon\boldsymbol{\hat\Lambda}\rightarrow\R$ as
\[
\hat\chi_\infty^{(t_0,\boldsymbol x_0)}(t,\boldsymbol{\hat x}) \ := \ \int_0^{+\infty} \frac{\hat\kappa_\infty^{(t_0,\boldsymbol x_0)}\big((t + s)\wedge T,\boldsymbol{\hat x}(\cdot\wedge t)\big)}{1 + C_\zeta + \hat\kappa_\infty^{(t_0,\boldsymbol x_0)}\big((t + s)\wedge T,\boldsymbol{\hat x}(\cdot\wedge t)\big)}\,\eta(s)\,ds,
\]
for all $(t,\boldsymbol{\hat x})\in\boldsymbol{\hat\Lambda}$, with $C_\zeta$ as in \eqref{C_zeta}, where we recall that $1+C_\zeta+\hat\kappa_\infty^{(t_0,\boldsymbol x_0)}\geq1$ (see item 3 of Lemma \ref{L:rho_infty}). Moreover, let $\chi_\infty^{(t_0,\boldsymbol x_0)}\colon\boldsymbol\Lambda\rightarrow\R$ be given by
\begin{equation}\label{gamma}
\chi_\infty^{(t_0,\boldsymbol x_0)}(t,\boldsymbol x) \ := \ \hat\chi_\infty^{(t_0,\boldsymbol x_0)}(t,\boldsymbol x), \qquad \forall\,(t,\boldsymbol x)\in\boldsymbol\Lambda.
\end{equation}
Then, the following properties hold.
\begin{enumerate}[\upshape 1)]
\item For every $(t,\boldsymbol{\hat x})\in\boldsymbol{\hat\Lambda}$, the horizontal and vertical derivatives of first and second-order of $\hat\chi_\infty^{(t_0,\boldsymbol x_0)}$ at $(t,\boldsymbol{\hat x})$ (namely $\partial^H_t\hat\chi_\infty^{(t_0,\boldsymbol x_0)}(t,\boldsymbol{\hat x})$, $\partial^V_{x_i}\hat\chi_\infty^{(t_0,\boldsymbol x_0)}(t,\boldsymbol{\hat x})$ and $\partial^V_{x_i x_j}\hat\chi_\infty^{(t_0,\boldsymbol x_0)}(t,\boldsymbol{\hat x})$, for every $i,j=1,\ldots,d$) exist.
\item $\hat\chi_\infty^{(t_0,\boldsymbol x_0)}\in\boldsymbol C^{1,2}(\boldsymbol{\hat\Lambda})$ and the map $\big((t_0,\boldsymbol x_0),(t,\boldsymbol{\hat x})\big)\mapsto\hat\chi_\infty^{(t_0,\boldsymbol x_0)}(t,\boldsymbol{\hat x})$ is continuous on $\boldsymbol\Lambda\times\boldsymbol{\hat\Lambda}$.
\item The horizontal derivative of $\hat\chi_\infty^{(t_0,\boldsymbol x_0)}$ is bounded by the constant $\frac{2}{\textup{e}}$; the first-order vertical derivatives of $\hat\chi_\infty^{(t_0,\boldsymbol x_0)}$ are bounded by the constant $1+C_\zeta$; the second-order vertical derivatives of $\hat\chi_\infty^{(t_0,\boldsymbol x_0)}$ are bounded by the constant $(1+C_\zeta)\Big(\sqrt{\frac{2}{\pi}}+2\Big)$.
\item For every $(t,\boldsymbol x)\in\boldsymbol\Lambda$,
\begin{align}\label{Ineq12_bis}
&\alpha_d\,\frac{\|\boldsymbol x(\cdot\wedge t) - \boldsymbol x_0(\cdot\wedge t_0)\|_\infty^{d+1}\wedge\|\boldsymbol x(\cdot\wedge t) - \boldsymbol x_0(\cdot\wedge t_0)\|_\infty}{1 + C_\zeta + \|\boldsymbol x(\cdot\wedge t) - \boldsymbol x_0(\cdot\wedge t_0)\|_\infty} \\
&\hspace{5cm}\leq \ \chi_\infty^{(t_0,\boldsymbol x_0)}(t,\boldsymbol x) \ \leq \ \|\boldsymbol x(\cdot\wedge t) - \boldsymbol x_0(\cdot\wedge t_0)\|_\infty\wedge1, \notag
\end{align}
with the same constant $\alpha_d$ as in \eqref{Ineq12}. In particular, it holds that $\chi_\infty^{(t_0,\boldsymbol x_0)}\geq0$.
\end{enumerate}
\end{lem}
\begin{proof}
See Appendix \ref{AppVar}, Section \ref{AppVar2}.
\end{proof}

In conclusion, by Lemma \ref{L:Smoothing_rho_infty} it follows that the map $\rho_\infty\colon\boldsymbol\Lambda\times\boldsymbol\Lambda\rightarrow[0,+\infty)$ given by
\begin{equation}\label{rho}
\rho_\infty\big((t,\boldsymbol x),(t_0,\boldsymbol x_0)\big) \ = \ |t - t_0|^2 + \chi_\infty^{(t_0,\boldsymbol x_0)}(t,\boldsymbol x), \qquad \forall\,(t,\boldsymbol x),(t_0,\boldsymbol x_0)\in\boldsymbol\Lambda,
\end{equation}
with $\chi_\infty$ as in \eqref{gamma}, is a
gauge-type function, which is also smooth as a map of the first pair, namely $(t,\boldsymbol x)\mapsto\rho_\infty((t,\boldsymbol x),(t_0,\boldsymbol x_0))$, and with bounded derivatives.

We now apply
Theorem \ref{T:Ekeland} to the smooth gauge-type function $\rho_\infty$ with bounded derivatives defined by \eqref{rho}, taking $\delta_0:=\delta>0$ and $\delta_n:=\delta/2^n$, for every $n\geq1$. 

\begin{thm}[Smooth variational principle on $\boldsymbol\Lambda$]\label{T:EkelandBis} 
Let $\delta>0$ and $G\colon\boldsymbol\Lambda\rightarrow\R$ be an upper semicontinuous map, bounded from above. For every $\eps>0$, let $(t_0,\boldsymbol x_0)\in\boldsymbol\Lambda$ satisfy
\[
\sup G - \eps \ \leq \ G(t_0,\boldsymbol x_0).
\]
Then, there exists a sequence $\{(t_n,\boldsymbol x_n)\}_{n\geq1}\subset\boldsymbol\Lambda$ which converges to some $(\bar t,\boldsymbol{\bar x})\in\boldsymbol\Lambda$ fulfilling the properties below.
\begin{enumerate}[\upshape i)]
\item $\rho_\infty((\bar t,\boldsymbol{\bar x}),(t_n,\boldsymbol x_n))\leq\frac{\eps}{2^n\delta}$, for every $n\geq0$.
\item $G(t_0,\boldsymbol x_0)\leq G(\bar t,\boldsymbol{\bar x})-\delta\varphi_\eps(t,\boldsymbol x)$, where the map $\varphi_\eps\colon\boldsymbol\Lambda\rightarrow[0,+\infty)$ is defined as
\[
\varphi_\eps(t,\boldsymbol x) \ := \ \sum_{n=0}^{+\infty} \frac{1}{2^n}\,\rho_\infty\big((t,\boldsymbol x),(t_n,\boldsymbol x_n)\big), \qquad \forall\,(t,\boldsymbol x)\in\boldsymbol\Lambda.
\]
\item For every $(t,\boldsymbol x)\neq(\bar t,\boldsymbol{\bar x})$, $G(t,\boldsymbol x) - \delta\,\varphi_\eps(t,\boldsymbol x)<G(\bar t,\boldsymbol{\bar x}) - \delta\,\varphi_\eps(\bar t,\boldsymbol{\bar x})$.
\end{enumerate}
Finally, the map $\varphi_\eps$ satisfies the following properties.
\begin{enumerate}[\upshape 1)]
\item $\varphi_\eps\in\boldsymbol C^{1,2}(\boldsymbol\Lambda)$ and is bounded.
\item $\partial_t^H\varphi_\eps$ is bounded by the constant
  $2\left(2\hspace{0.5mm}T+\frac{2}{\textup{e}}\right)$.
\item For every $i,j=1,\ldots,d$, $\partial^V_{x_i}\varphi_\eps$ is bounded by the constant $2\,(1+C_\zeta)$ and $\partial^V_{x_i x_j}\varphi_\eps$ is bounded by the constant $2\,(1+C_\zeta)\Big(\sqrt{\frac{2}{\pi}}+2\Big)$.
\end{enumerate}
\end{thm}
\begin{proof}
Items i)-ii)-iii) follow directly from Theorem \ref{T:Ekeland}, while items 1)-2)-3) follow easily from items 2)-3)-4) of Lemma \ref{L:Smoothing_rho_infty}.
\end{proof}

\section{Crandall-Lions (path-dependent) viscosity solutions}
\label{S:Visc}

\subsection{Viscosity solutions}

In the present section we consider the second-order path-dependent partial differential equation
\begin{equation}\label{PPDE_general}
\hspace{-5mm}\begin{cases}
\vspace{2mm}
\partial_t^H u(t,\boldsymbol x) = F\big(t,\boldsymbol x,u(t,\boldsymbol x),\partial^V_{\boldsymbol x}u(t,\boldsymbol x),\partial^V_{\boldsymbol x\boldsymbol x} u(t,\boldsymbol x)\big), &(t,\boldsymbol x)\in[0,T)\times C([0,T];\R^d), \\
u(T,\boldsymbol x) = \xi(\boldsymbol x), &\,\boldsymbol x\in C([0,T];\R^d),
\end{cases}
\end{equation}
with $F\colon[0,T]\times C([0,T];\R^d)\times\R^d\times\mathcal S(d)\rightarrow\R$ and $\xi\colon C([0,T];\R^d)\rightarrow\R$, where $\mathcal S(d)$ is the set of symmetric $d\times d$ matrices.
\begin{defn}
We denote by $\boldsymbol C_{\textup{pol}}^{1,2}(\boldsymbol\Lambda)$ the set of $\varphi\in\boldsymbol C^{1,2}(\boldsymbol\Lambda)$ such that $\varphi$, $\partial_t^H\varphi$, $\partial^V_{\boldsymbol x}\varphi$, $\partial^V_{\boldsymbol x\boldsymbol x}\varphi$ satisfy a polynomial growth condition.
\end{defn}

\begin{defn}\label{D:Visc}
We say that an upper semicontinuous map $u\colon\boldsymbol\Lambda\rightarrow\R$ is a (\textbf{path-dependent}) \textbf{viscosity subsolution} of equation \eqref{PPDE_general} if the following holds.
\begin{itemize}
\item $u(T,\boldsymbol x)\leq\xi(\boldsymbol x)$, for all $\boldsymbol x\in C([0,T];\R^d)$;
\item for any $(t,\boldsymbol x)\in[0,T)\times C([0,T];\R^d)$ and $\varphi\in\boldsymbol C_{\textup{pol}}^{1,2}(\boldsymbol\Lambda)$, satisfying
\[
(u-\varphi)(t,\boldsymbol x)=\sup_{(t',\boldsymbol x')\in\boldsymbol\Lambda}(u-\varphi)(t',\boldsymbol x'),
\]
we have
\[
- \partial_t^H\varphi(t,\boldsymbol x) + F\big(t,\boldsymbol x,u(t,\boldsymbol x),\partial^V_{\boldsymbol x}\varphi(t,\boldsymbol x),\partial^V_{\boldsymbol x\boldsymbol x}\varphi(t,\boldsymbol x)\big) \ \leq \ 0.
\]
\end{itemize}

\noindent We say that a lower semicontinuous map $u\colon\boldsymbol\Lambda\rightarrow\R$ is a (\textbf{path-dependent}) \textbf{viscosity supersolution} of equation \eqref{PPDE_general} if:
\begin{itemize}
\item $u(T,\boldsymbol x)\geq\xi(\boldsymbol x)$, for all $\boldsymbol x\in C([0,T];\R^d)$;
\item for any $(t,\boldsymbol x)\in[0,T)\times C([0,T];\R^d)$ and $\varphi\in\boldsymbol C_{\textup{pol}}^{1,2}(\boldsymbol\Lambda)$, satisfying:
\[
(u-\varphi)(t,\boldsymbol x)=\inf_{(t',\boldsymbol x')\in\boldsymbol\Lambda}(u-\varphi)(t',\boldsymbol x'),
\]
we have
\[
- \partial_t^H\varphi(t,\boldsymbol x) + F\big(t,\boldsymbol x,u(t,\boldsymbol x),\partial^V_{\boldsymbol x}\varphi(t,\boldsymbol x),\partial^V_{\boldsymbol x\boldsymbol x}\varphi(t,\boldsymbol x)\big) \ \geq \ 0.
\]
\end{itemize}

\noindent We say that a continuous map $u\colon\boldsymbol\Lambda\rightarrow\R$ is a (\textbf{path-dependent}) \textbf{viscosity solution} of equation \eqref{PPDE_general} if $u$ is both a (path-dependent) viscosity subsolution and a (path-dependent) viscosity supersolution of \eqref{PPDE_general}.
\end{defn}

\subsection{Path-dependent heat equation}

In the present section we focus on the path-dependent heat equation, namely when $F(t,\boldsymbol x,r,p,M)=-\frac{1}{2}\text{tr}[M]$
\begin{equation}\label{PPDE}
\begin{cases}
\vspace{2mm}
\partial_t^H u(t,\boldsymbol x) + \dfrac{1}{2} \textup{tr}\big[\partial^V_{\boldsymbol x\boldsymbol x} u(t,\boldsymbol x)\big] \ = \ 0, \qquad\qquad &(t,\boldsymbol x)\in[0,T)\times C([0,T];\R^d), \\
u(T,\boldsymbol x) \ = \ \xi(\boldsymbol x), &\,\boldsymbol x\in C([0,T];\R^d).
\end{cases}
\end{equation}
In the sequel we denote
\begin{equation}\label{23bis}
\mathcal L u(t,\boldsymbol x) \ := \ \partial_t^H u(t,\boldsymbol x) + \frac{1}{2} \textup{tr}[\partial^V_{\boldsymbol x\boldsymbol x} u(t,\boldsymbol x)].
\end{equation}
 On the terminal condition $\xi$, we impose the assumption

\vspace{2mm}

\noindent{\bf (A)} \emph{The function $\xi\colon C([0,T];\R^d)\rightarrow\R$ is continuous and bounded.}

\subsubsection{Existence}

\noindent The ``candidate solution'' to equation \eqref{PPDE} is
\begin{equation}\label{v}
v(t,\boldsymbol x) \ := \ \E\big[\xi(\boldsymbol W^{t,\boldsymbol x})\big], \qquad \text{for all }(t,\boldsymbol x)\in\boldsymbol\Lambda,
\end{equation}
where $\boldsymbol W=(\boldsymbol W_s)_{s\in[0,T]}$ is a $d$-dimensional Brownian motion on some complete probability space $(\Omega,{\mathcal F},\P)$, and the stochastic process $\boldsymbol W^{t,\boldsymbol x}=(\boldsymbol W_s^{t,\boldsymbol x})_{s\in[0,T]}$ is given by
\begin{equation}\label{W^t,x}
\boldsymbol W_s^{t,\boldsymbol x} \ := \
\begin{cases}
\boldsymbol x(s), & s\leq t, \\
\boldsymbol x(t) + \boldsymbol W_s - \boldsymbol W_t, \qquad\quad & s>t.
\end{cases}
\end{equation}
\begin{rem} \label{R421}
  The boundedness of $\xi$ in Assumption {\bf (A)}
   will be used in the proof of (the comparison) Theorem \ref{T:Comparison}. On the other hand, the proof that the function $v$ in \eqref{v} is continuous and is a viscosity solution of equation \eqref{PPDE} (see the proof of Theorem \ref{T:Existence}) holds under weaker growth condition on $\xi$ (for instance, $\xi$ having polynomial growth).	
\end{rem}

\begin{thm}\label{T:Existence}
  Under Assumption {\bf (A)}, the function $v$ in \eqref{v} is continuous and bounded.
  Moreover, $v$ is a (path-dependent) viscosity solution of equation \eqref{PPDE}.
\end{thm}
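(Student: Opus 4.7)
Boundedness of $v$ by $\|\xi\|_\infty$ follows directly from Assumption \textbf{(A)}, and non-anticipativity is clear because $\boldsymbol W^{t,\boldsymbol x}$ depends on $\boldsymbol x$ only through $\boldsymbol x(\cdot\wedge t)$. For continuity, I would take $(t_n,\boldsymbol x_n)\to(t,\boldsymbol x)$ in $d_\infty$ and establish $\|\boldsymbol W^{t_n,\boldsymbol x_n}-\boldsymbol W^{t,\boldsymbol x}\|_\infty\to 0$ almost surely by splitting $s\in[0,T]$ into the three regions $s\leq\min(t,t_n)$, $\min(t,t_n)<s\leq\max(t,t_n)$, and $s>\max(t,t_n)$, controlling each using the $d_\infty$-convergence of $(t_n,\boldsymbol x_n)$ combined with the almost sure uniform continuity of Brownian paths. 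Bounded convergence (exploiting boundedness and continuity of $\xi$) then yields $v(t_n,\boldsymbol x_n)\to v(t,\boldsymbol x)$.

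For the viscosity solution property, the terminal condition $v(T,\boldsymbol x)=\xi(\boldsymbol x)$ is immediate since $\boldsymbol W^{T,\boldsymbol x}=\boldsymbol x$. To check the subsolution inequality, fix $(t,\boldsymbol x)\in[0,T)\times C([0,T];\R^d)$ and $\varphi\in\boldsymbol C_{\textup{pol}}^{1,2}(\boldsymbol\Lambda)$ such that $v-\varphi$ attains its global maximum at $(t,\boldsymbol x)$. The crucial preparatory step is the \emph{flow identity}
\[
v(t,\boldsymbol x) \ = \ \E\big[v(t+h,\boldsymbol W^{t,\boldsymbol x})\big], \qquad\forall\,h\in(0,T-t),
\]
which I would prove by decomposing the Brownian path on $[t,T]$ into two independent portions on $[t,t+h]$ and $[t+h,T]$, identifying in law $\boldsymbol W^{t,\boldsymbol x}$ with $\boldsymbol W^{t+h,\boldsymbol W^{t,\boldsymbol x}}$ (the latter built from an independent Brownian copy), and using Fubini. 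Combined with the global max property, this gives $\E[\varphi(t+h,\boldsymbol W^{t,\boldsymbol x})]-\varphi(t,\boldsymbol x)\geq 0$.

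I would then apply the functional It\^o formula (Theorem \ref{T:Ito}) to $\varphi$ along the continuous semimartingale $\boldsymbol W^{t,\boldsymbol x}$ on $[t,t+h]$. Since on that interval $\boldsymbol W^{t,\boldsymbol x}$ has zero drift and $[\boldsymbol W^i,\boldsymbol W^j]_s=\delta_{ij}(s-t)$, the formula reduces to
\[
\varphi(t+h,\boldsymbol W^{t,\boldsymbol x}) - \varphi(t,\boldsymbol x) \ = \ \int_t^{t+h}\mathcal L\varphi(s,\boldsymbol W^{t,\boldsymbol x})\,ds + \sum_{i=1}^d\int_t^{t+h}\partial^V_{x_i}\varphi(s,\boldsymbol W^{t,\boldsymbol x})\,d\boldsymbol W^i_s,
\]
with $\mathcal L$ as in \eqref{23bis}. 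The stochastic integral has zero mean: the polynomial growth of $\partial^V_{\boldsymbol x}\varphi$ together with the finiteness of all moments of $\sup_s|\boldsymbol W^{t,\boldsymbol x}_s|$ makes it a true martingale. Taking expectation, dividing by $h$, and letting $h\downarrow 0$, the continuity of $\mathcal L\varphi$ together with a dominated convergence argument (again exploiting polynomial growth) yields $\mathcal L\varphi(t,\boldsymbol x)\geq 0$, i.e.\ the required subsolution inequality. The supersolution case is strictly symmetric, with $v-\varphi$ now realizing a global minimum.

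The genuinely delicate point is the flow identity together with the associated Fubini and measurability verifications: one must carefully handle the conditioning on the frozen past $\boldsymbol x(\cdot\wedge t)$ and identify in law the relevant concatenation of independent Brownian pieces. Once this is in place, the It\^o expansion, the integrability bounds needed for the martingale property of the stochastic integral, and the passage to the limit are routine consequences of the polynomial-growth assumption built into $\boldsymbol C_{\textup{pol}}^{1,2}(\boldsymbol\Lambda)$.
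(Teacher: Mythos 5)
Your proposal is correct and follows essentially the same route as the paper: continuity via pathwise control of $\boldsymbol W^{t,\boldsymbol x}-\boldsymbol W^{t',\boldsymbol x'}$ plus dominated convergence, then the flow/DPP identity $v(t,\boldsymbol x)=\E[v(t+h,\boldsymbol W^{t,\boldsymbol x})]$, and finally the functional It\^o formula with the polynomial-growth test class making the stochastic integral a true martingale. The only cosmetic difference is in the flow identity: the paper observes the \emph{pathwise} equality $\boldsymbol W_\cdot^{t+h,\boldsymbol W^{t,\boldsymbol x}}=\boldsymbol W_\cdot^{t,\boldsymbol x}$ (same Brownian motion) and then applies the freezing lemma for conditional expectation, whereas you propose an in-law identification with an independent Brownian copy plus Fubini; both yield the DPP, but the paper's pathwise observation sidesteps the measurability bookkeeping you flag as the delicate point.
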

\begin{proof}
  \textsc{Step I.} \emph{Continuity of $v$.} Given $(t,\boldsymbol x),(t',\boldsymbol x')
  \in\boldsymbol\Lambda$, with $t\leq t'$, from \eqref{W^t,x} we have
\[ 
\boldsymbol W_s^{t,\boldsymbol x} - \boldsymbol W_s^{t',\boldsymbol x'} \ = \
\begin{cases}
\boldsymbol x(s) - \boldsymbol x'(s), & s\leq t, \\
\boldsymbol x(t) - \boldsymbol x'(s) + \boldsymbol W_s - \boldsymbol W_t, \qquad & t<s\leq t', \\
\boldsymbol x(t) - \boldsymbol x'(t') + \boldsymbol W_{t'} - \boldsymbol W_t, \qquad\quad & s>t'.
\end{cases}
\]
Hence
\begin{align*}
\sup_{s\in[0,T]}\big|\boldsymbol W_s^{t,\boldsymbol x} - \boldsymbol W_s^{t',\boldsymbol x'}\big| \ &\leq \ \|\boldsymbol x(\cdot\wedge t) - \boldsymbol x'(\cdot\wedge t')\|_\infty + \sup_{s\in[t,t']}\big|\boldsymbol W_s - \boldsymbol W_t\big| \\
&\leq \ \|\boldsymbol x(\cdot\wedge t) - \boldsymbol x'(\cdot\wedge t')\|_\infty + \sum_{i=1}^d \sup_{s\in[t,t']}\big|W_s^i - W_t^i\big|,
\end{align*}
where $\boldsymbol W=(W^1,\ldots,W^d)$ and the second inequality follows from the fact the Euclidean norm on $\R^d$ is estimated by the $1$-norm. By the reflection principle, $\sup_{s\in[t,t']}|W_s^i-W_t^i|$ has the same law as $|W_{t'}^i-W_t^i|$, therefore
\begin{align*}
\E\Big[\sup_{s\in[0,T]}\big|\boldsymbol W_s^{t,\boldsymbol x} - \boldsymbol W_s^{t',\boldsymbol x'}\big|\Big] \ &\leq \ \|\boldsymbol x(\cdot\wedge t) - \boldsymbol x'(\cdot\wedge t')\|_\infty + \sum_{i=1}^d\E\big[|W_{t'}^i - W_t^i|\big] \\
&= \ \|\boldsymbol x(\cdot\wedge t) - \boldsymbol x'(\cdot\wedge t')\|_\infty + d\sqrt{\frac{2}{\pi}}\sqrt{|t-t'|}.
\end{align*}
Then, since $\xi$ is bounded and continuous, the continuity of $v$ follows from the above estimate together with the Lebesgue dominated convergence theorem.

\vspace{2mm}

\noindent\textsc{Step II.} \emph{$v$ is a viscosity solution of equation \eqref{PPDE}.} For every $t\in[0,T]$, let $\F^t=({\mathcal F}_s^t)_{s\in[t,T]}$ be the completion of the filtration generated by $(\boldsymbol W_s-\boldsymbol W_t)_{s\in[t,T]}$. Now, fix $(t,\boldsymbol x)\in\boldsymbol\Lambda$ and $t'\in[t,T]$. We first prove that 
\begin{equation}\label{DPP_v}
v(t,\boldsymbol x) \ = \ \E\big[v\big(t',\boldsymbol W^{t,\boldsymbol x}\big)\big].
\end{equation}
To this end, we begin noticing that by \eqref{W^t,x} we have
\begin{equation} \label{E23}
\boldsymbol W_\cdot^{t,\boldsymbol x} \ = \ \boldsymbol x(\cdot\wedge t) + \boldsymbol W_{\cdot\vee t} - \boldsymbol W_t.
\end{equation}
Therefore
\begin{equation}\label{v_proof}
v(t,\boldsymbol x) \ = \ \E[\xi(\boldsymbol x(\cdot\wedge t) + \boldsymbol W_{\cdot\vee t} - \boldsymbol W_t)].	
\end{equation}
Now, notice that, by \eqref{E23},
\[
\boldsymbol W_\cdot^{t',\boldsymbol W^{t,\boldsymbol x}} \ = \ \boldsymbol W_{\cdot\wedge t'}^{t,\boldsymbol x} + \boldsymbol W_{\cdot\vee t'} - \boldsymbol W_{t'} \ = \ \boldsymbol W_\cdot^{t,\boldsymbol x}.
\]
This proves the flow property $\boldsymbol W_\cdot^{t,\boldsymbol x}=\boldsymbol W_\cdot^{t',\boldsymbol W^{t,\boldsymbol x}}$. Then, by the freezing lemma
for conditional expectation
 and formula \eqref{v_proof}, we obtain
\begin{align*}
v(t,\boldsymbol x) \ = \ \E\big[\xi(\boldsymbol W^{t,\boldsymbol x})\big] \ &= \ \E\big[\xi\big(\boldsymbol W^{t',\boldsymbol W^{t,\boldsymbol x}}\big)\big] \ = \ \E\big[\xi\big(\boldsymbol W_{\cdot\wedge t'}^{t,\boldsymbol x} + \boldsymbol W_{\cdot\vee t'} - \boldsymbol W_{t'}\big)\big] \\
&= \ \E\big[\E\big[\xi\big(\boldsymbol W_{\cdot\wedge t'}^{t,\boldsymbol x} + \boldsymbol W_{\cdot\vee t'} - \boldsymbol W_{t'}\big)\big|{\mathcal F}_{t'}^t\big]\big] \ = \ \E\big[v\big(t',\boldsymbol W_{\cdot\wedge t'}^{t,\boldsymbol x}\big)\big].
\end{align*}
Finally, recalling that $v$ is non-anticipative we deduce that $v(t',\boldsymbol W_{\cdot\wedge t'}^{t,\boldsymbol x})=v(t',\boldsymbol W^{t,\boldsymbol x})$, which
 concludes the proof of formula \eqref{DPP_v}.

Let us now prove that $v$ is a viscosity solution of equation \eqref{PPDE}. We only prove the viscosity subsolution property, as the supersolution property can be proved in a similar way. We proceed along the same lines as in the proof of the subsolution property in Theorem 3.66 of \cite{fabbrigozziswiech}. Let $(t,\boldsymbol x)\in[0,T)\times C([0,T];\R^d)$ and $\varphi\in\boldsymbol C_{\textup{pol}}^{1,2}(\boldsymbol\Lambda)$, satisfying:
\[
(v-\varphi)(t,\boldsymbol x)=\sup_{(t',\boldsymbol x')\in\boldsymbol\Lambda}(v-\varphi)(t',\boldsymbol x').
\]
We suppose that $(v-\varphi)(t,\boldsymbol x)=0$ (if this is not the case, we replace $\varphi$ by $\psi(\cdot,\cdot):=\varphi(\cdot,\cdot)+v(t,\boldsymbol x)-\varphi(t,\boldsymbol x)$). Take
\begin{equation}\label{v_proof_2}
\varphi(t,\boldsymbol x) \ = \ v(t,\boldsymbol x) \ = \ \E\big[v\big(t+\eps,\boldsymbol W^{t,\boldsymbol x}\big)\big] \ \leq \ \E\big[\varphi\big(t+\eps,\boldsymbol W^{t,\boldsymbol x}\big)\big],
\end{equation}
where the latter inequality follows from the fact that $\sup(v-\varphi)=0$, so that $v\leq\varphi$ on $\boldsymbol\Lambda$. Notice that the last expectation in \eqref{v_proof_2} is finite, as $\varphi$ has polynomial growth. Now, by the functional It\^o formula \eqref{Ito_formula}, we have
\[
\varphi(t+\eps,\boldsymbol W^{t,\boldsymbol x}) \ = \ \varphi(t,\boldsymbol x) + \int_t^{t+\eps} \mathcal L\varphi(s,\boldsymbol W^{t,\boldsymbol x})\,ds + \sum_{i=1}^d \int_t^{t+\eps} \partial^V_{x_i}\varphi(s,\boldsymbol W^{t,\boldsymbol x})\,d W_s^i,
\]
where $\mathcal L$ was defined in \eqref{23bis}.
Since $\partial^V_{x_i}\varphi$ has polynomial growth, the corresponding stochastic integral is a martingale. Then, plugging the above formula into \eqref{v_proof_2} and dividing by $\eps$, we find
\[
-\,\E\bigg[\frac{1}{\eps} \int_t^{t+\eps} \mathcal L\varphi(s,\boldsymbol W^{t,\boldsymbol x})\,ds\bigg] \ \leq \ 0.
\]
Letting $\eps\rightarrow0^+$, we conclude that
\[
-\mathcal L\varphi(t,\boldsymbol x) \ \leq \ 0,
\]
which proves the viscosity subsolution property.
\end{proof}

\subsubsection{Comparison theorem and uniqueness}

\begin{thm}\label{T:Comparison}
Suppose that Assumption {\bf (A)} holds. Let $u,w\colon\boldsymbol\Lambda\rightarrow\R$ be respectively upper and lower semicontinuous, satisfying
\[
\sup u \ < \ +\infty, \qquad\qquad
\inf w \ > \ -\infty.
\]
Suppose that $u$ $($resp. $w$$)$ is a (path-dependent) viscosity subsolution $($resp. supersolution$)$ of equation \eqref{PPDE}. Then $u\leq w$ on $\boldsymbol\Lambda$.
\end{thm}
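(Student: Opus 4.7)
The strategy is to establish separately the two one-sided inequalities $u \leq v$ and $v \leq w$, where $v$ is the candidate solution defined in \eqref{v}; the claim then follows by transitivity. Only $u \leq v$ will be sketched, since $v \leq w$ is symmetric (reverse the signs and exchange sub/super).

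Using the probabilistic representation \eqref{v} together with the smoothing tools of Appendix \ref{AppC} (Lemmata \ref{L:Classical} and \ref{L:Smoothing}), I would construct a sequence $\{v_n\}_n \subset \boldsymbol C_{\textup{pol}}^{1,2}(\boldsymbol\Lambda)$ with $v_n \to v$ locally uniformly, $v_n(T,\cdot) \to \xi(\cdot)$ uniformly on bounded sets, and $\mathcal L v_n \to 0$ pointwise, the smoothing being tailored exactly so that $v_n$ is an approximate classical solution of \eqref{PPDE}. Argue by contradiction: suppose $\theta := \sup_{\boldsymbol\Lambda}(u - v) > 0$. Fix a small $\eta > 0$ to be chosen later and consider the upper semicontinuous, bounded-above map
\[
G_n(t,\boldsymbol x) \ := \ u(t,\boldsymbol x) - v_n(t,\boldsymbol x) - \eta(T - t), \qquad (t,\boldsymbol x) \in \boldsymbol\Lambda.
\]
For $n$ large, $\sup G_n \geq \theta/2$, while $\sup_{\boldsymbol x}\bigl(u(T,\boldsymbol x)-v_n(T,\boldsymbol x)\bigr) \to (\sup(u(T,\cdot)-\xi))^+ = 0$ by the boundary inequality $u(T,\cdot)\leq\xi$ and the convergence of $v_n(T,\cdot)$ to $\xi$.

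Since $\boldsymbol\Lambda$ is not locally compact, in place of the classical ``take a local max'' step I invoke the smooth variational principle, Theorem \ref{T:Ekeland}, applied to $G_n$ with $\eps := \theta/4$ and some small $\delta > 0$ to be fixed later. This produces $(\bar t_n,\boldsymbol{\bar x}_n) \in \boldsymbol\Lambda$ at which $G_n - \delta\,\varphi_\eps$ attains a strict global maximum and, by item ii), $G_n(\bar t_n,\boldsymbol{\bar x}_n) \geq \theta/4$. Choosing $n$ large so that $\sup_{\boldsymbol x}\bigl(u(T,\boldsymbol x)-v_n(T,\boldsymbol x)\bigr) < \theta/8$ forces $\bar t_n < T$. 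The test function $\psi_n(t,\boldsymbol x) := v_n(t,\boldsymbol x) + \eta(T-t) + \delta\,\varphi_\eps(t,\boldsymbol x)$ then belongs to $\boldsymbol C_{\textup{pol}}^{1,2}(\boldsymbol\Lambda)$ and satisfies $(u-\psi_n)(\bar t_n,\boldsymbol{\bar x}_n) = \sup_{\boldsymbol\Lambda}(u-\psi_n)$; the viscosity subsolution property from Definition \ref{D:Visc} yields
\[
\mathcal L v_n(\bar t_n,\boldsymbol{\bar x}_n) - \eta + \delta\,\mathcal L\varphi_\eps(\bar t_n,\boldsymbol{\bar x}_n) \ \geq \ 0.
\]
From items 2) and 3) of Theorem \ref{T:Ekeland} there is a constant $C = C(T,d) > 0$ with $|\mathcal L\varphi_\eps| \leq C$ on $\boldsymbol\Lambda$. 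Setting $\delta := \eta/(2C)$ reduces this to $\mathcal L v_n(\bar t_n,\boldsymbol{\bar x}_n) \geq \eta/2$, which contradicts $\mathcal L v_n \to 0$ along the near-maximum points for $n$ large.

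The main obstacle is producing $\{v_n\}$ with all the required properties simultaneously, in particular polynomial-growth control on $v_n$ and its pathwise derivatives, uniform convergence at the terminal time, and smallness of $\mathcal L v_n$ on the (a priori unknown) set of near-maximum points. This is precisely where the simple structure of the heat equation (no drift, diffusion, or nonlinear $F$) is crucial, as it allows the smoothing results recalled in Lemmata \ref{L:Classical} and \ref{L:Smoothing} to be applied. A secondary delicacy is the exclusion of $\bar t_n = T$, which rests on the boundary inequality $u(T,\cdot) \leq \xi$ combined with the uniform convergence $v_n(T,\cdot) \to \xi$; without either of these one could not translate the near-maximum of $G_n$ into an admissible interior point where the subsolution inequality can be tested.
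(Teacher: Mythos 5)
Your overall architecture matches the paper's: reduce to showing $u\le v$ and $v\le w$ with $v$ the Feynman--Kac candidate, smooth $v$ via the cylinder approximation of Appendix~\ref{AppC} into exact classical solutions $v_n$ of the heat equation, then argue by contradiction using the smooth variational principle (Theorem~\ref{T:Ekeland}) to produce an interior near-maximum at which the subsolution property is tested against $v_n + \delta\varphi_\eps$. Replacing the paper's exponential scaling $u^\lambda = \mathrm{e}^{\lambda t}u$ with the linear perturbation $\eta(T-t)$ is a legitimate classical device for getting a strict inequality in the final step, and your Step~V analogue (choose $\delta = \eta/(2C)$, use that $\mathcal L v_n = 0$ exactly, and use the uniform bound on $\mathcal L\varphi_\eps$ from items~2) and~3) of the variational principle) is correct once one notes that Lemma~\ref{L:Classical} gives $\mathcal L v_n \equiv 0$ on the nose, so the weaker hypothesis ``$\mathcal L v_n\to0$ pointwise'' in your plan is both unnecessary and, as stated, insufficient (the near-max points $(\bar t_n,\boldsymbol{\bar x}_n)$ are a priori uncontrolled, so pointwise smallness of $\mathcal L v_n$ would not be usable there).

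The genuine gap is in the exclusion of $\bar t_n = T$. You invoke $\sup_{\boldsymbol x}\bigl(u(T,\boldsymbol x)-v_n(T,\boldsymbol x)\bigr)\to 0$, which would follow from uniform convergence $\xi_n\to\xi$. But Lemma~\ref{L:Smoothing} delivers only \emph{pointwise} convergence of $\xi_n$ to $\xi$; on the non-locally-compact space $C([0,T];\R^d)$ one cannot upgrade pointwise convergence of the cylindrical approximations to uniform convergence (on bounded sets this already fails, and the supremum over $\boldsymbol x$ is over an unbounded set anyway). Consequently $\sup_{\boldsymbol x}\bigl(u(T,\boldsymbol x)-\xi_n(\boldsymbol x)\bigr)$ need not be small, and your bound ``$<\theta/8$'' cannot be secured. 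The paper circumvents this with a different mechanism: it fixes a reference point $(t_0,\boldsymbol x_0)$ where $u-v>0$ and chooses $N_0$ so that $|\xi(\boldsymbol x_0)-\xi_{N_0}(\boldsymbol x_0)|$ is controlled \emph{only at $\boldsymbol x_0$} (which pointwise convergence does provide; see \eqref{xi-xi_N_0}), then exploits item~i) of the variational principle, $\rho_\infty\bigl((\bar t,\boldsymbol{\bar x}),(t_0,\boldsymbol x_0)\bigr)\le \eps/\delta$, combined with \eqref{Ineq12_bis} to make $(\bar t,\boldsymbol{\bar x})$ $d_\infty$-close to $(t_0,\boldsymbol x_0)$ as $\eps\to0$, and finally the continuity of $\xi,\xi_{N_0}$ together with the exponential scaling $\lambda\to0$ to derive the contradiction if $\bar t = T$. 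This localization step is unavailable in your setup because you have frozen $\eps = \theta/4$ and taken $\delta$ small, so $\eps/\delta$ is large and item~i) gives no closeness of $\boldsymbol{\bar x}_n$ to a reference point. The missing ingredient is precisely this interplay between the near-sup parameter $\eps$, the localization estimate of the variational principle, and pointwise (not uniform) control of $\xi-\xi_n$ at a single fixed path.
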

\begin{proof}
The proof consists in showing that $u\leq v$ and $v\leq w$ on $\boldsymbol\Lambda$ (with $v$ given by \eqref{v}), from which we immediately deduce the claim. In what follows, we only report the proof of the inequality $u\leq v$, as the other inequality (that is $v\leq w$) can be deduced from the first one replacing $u$, $v$, $\xi$ with $-w$, $-v$, $-\xi$, respectively.

\vspace{1mm}

We proceed by contradiction and assume that $\sup(u-v)>0$. Then, there exists $(t_0,\boldsymbol x_0)\in\boldsymbol\Lambda$ such that
\[
(u - v)(t_0,\boldsymbol x_0) \ > \ 0.
\]
Notice that $t_0<T$, since $u(T,\cdot)\leq\xi(\cdot)=v(T,\cdot)$. We split the rest of the proof into five steps.

\vspace{1mm}

\noindent\textsc{Step I}. Let $\{\xi_N\}_N$ be the sequence given by Lemma \ref{L:Smoothing}. Since $\xi$ is bounded, we have that $\xi_N$ is bounded uniformly with respect to $N$. Now, denote
\[
v_N(t,\boldsymbol x) \ := \ \E\big[\xi_N(\boldsymbol W^{t,\boldsymbol x})\big], \qquad \text{for all }(t,\boldsymbol x)\in\boldsymbol\Lambda.
\]
Then, $v_N$ is bounded uniformly with respect to $N$. Moreover, by Lemma \ref{L:Classical} it follows that, for every $N$, $v_N\in\boldsymbol C^{1,2}(\boldsymbol\Lambda)$ and is a classical (smooth) solution of equation \eqref{PPDE} with terminal condition $\xi_N$. Finally, recalling from Lemma \ref{L:Smoothing} that $\{\xi_N\}_N$ converges pointwise to $\xi$ as $N\rightarrow+\infty$, it follows from the Lebesgue dominated convergence theorem that $\{v_N\}_N$ converges pointwise to $v$ as $N\rightarrow+\infty$. Then, we notice that there exists $N_0\in\N$ such that
\begin{equation}\label{Contradiction}
(u - v_{N_0})(t_0,\boldsymbol x_0) \ > \ 0.
\end{equation}
We also suppose that (possibly enlarging $N_0$)
\begin{equation}\label{xi-xi_N_0}
|\xi(\boldsymbol x_0) - \xi_{N_0}(\boldsymbol x_0)| \ \leq \ \frac{1}{2}(u - v_{N_0})(t_0,\boldsymbol x_0).
\end{equation}

\vspace{1mm}

\noindent\textsc{Step II}. For every $\lambda>0$, we set
\[
u^\lambda(t,\boldsymbol x) := \text{e}^{\lambda t}u(t,\boldsymbol x), \; \xi^\lambda(\boldsymbol x) := \text{e}^{\lambda T}\xi(\boldsymbol x), \; v_{N_0}^\lambda(t,\boldsymbol x) := \text{e}^{\lambda t}v_{N_0}(t,\boldsymbol x), \; \xi_{N_0}^\lambda(\boldsymbol x) := \text{e}^{\lambda T}\xi_{N_0}(\boldsymbol x).
\]
for all $(t,\boldsymbol x)\in\boldsymbol\Lambda$. Notice that $u^\lambda$ is a (path-dependent) viscosity subsolution of the path-dependent partial differential equation
\begin{equation}\label{PPDE_lambda}
\begin{cases}
\vspace{2mm}
\partial_t^H u^\lambda(t,\boldsymbol x) + \frac{1}{2} \textup{tr}[\partial^V_{\boldsymbol x} u^\lambda(t,\boldsymbol x)] = \lambda\,u^\lambda(t,\boldsymbol x), \quad &(t,\boldsymbol x)\in[0,T)\times C([0,T];\R^d), \\
u^\lambda(T,\boldsymbol x) \ = \ \xi^\lambda(\boldsymbol x), &\,\boldsymbol x\in C([0,T];\R^d).
\end{cases}
\end{equation}
Similarly, $v_{N_0}^\lambda$ is a classical (smooth) solution of equation \eqref{PPDE_lambda} with $\xi^\lambda$ replaced by $\xi_{N_0}^\lambda$. We finally notice that by \eqref{Contradiction} we have
\[
(u^\lambda - v_{N_0}^\lambda)(t_0,\boldsymbol x_0) \ > \ 0.
\]
So, in particular,
\begin{equation}\label{ContradictionN_0}
\sup (u^\lambda - v_{N_0}^\lambda) - \eps \ = \ (u^\lambda - v_{N_0}^\lambda)(t_0,\boldsymbol x_0) \ \leq \ \sup (u^\lambda - v_{N_0}^\lambda),
\end{equation}
where $\eps:=\sup (u^\lambda - v_{N_0}^\lambda) - (u^\lambda - v_{N_0}^\lambda)(t_0,\boldsymbol x_0)$.

\vspace{1mm}

\noindent\textsc{Step III.} Notice that $u^\lambda-v_{N_0}^\lambda$ is upper semicontinuous and bounded from above. Then, by \eqref{ContradictionN_0} and the smooth variational principle (Theorem \ref{T:EkelandBis}) with $G = u^\lambda - v_{N_0}^\lambda$,
we deduce that for every $\delta>0$ there exists a sequence $\{(t_n,\boldsymbol x_n)\}_{n\geq1}\subset\boldsymbol\Lambda$ converging to some $(\bar t,\boldsymbol{\bar x})\in\boldsymbol\Lambda$ (possibly depending
on $\eps, \delta, \lambda, N_0$) such that the following holds.
\begin{enumerate}[\upshape i)]
\item $\rho_\infty((t_n,\boldsymbol x_n),(\bar t,\boldsymbol{\bar x}))\leq\frac{\eps}{2^n\delta}$, for every $n\geq0$, where $\rho_\infty$ is the smooth gauge-type function with bounded derivatives defined by \eqref{rho}.
\item $(u^\lambda-v_{N_0}^\lambda)(t_0,\boldsymbol x_0)\leq \big(u^\lambda-(v_{N_0}^\lambda+\delta\varphi_\eps)\big)(\bar t,\boldsymbol{\bar x})$, where
\[
\varphi_\eps(t,\boldsymbol x) \ := \ \sum_{n=0}^{+\infty} \frac{1}{2^n}\,\rho_\infty\big((t,\boldsymbol x),(t_n,\boldsymbol x_n)\big) \qquad \forall\,(t,\boldsymbol x)\in\boldsymbol\Lambda.
\]
\item It holds that
\begin{equation}\label{maximum_proof}
\big(u^\lambda - (v_{N_0}^\lambda + \delta\varphi_\eps)\big)(\bar t,\boldsymbol{\bar x}) \ = \ \sup_{(t,\boldsymbol x)\in\boldsymbol\Lambda}\big(u^\lambda - (v_{N_0}^\lambda + \delta\varphi_\eps)\big)(t,\boldsymbol x).
\end{equation}
\end{enumerate}
We also recall from Theorem \ref{T:EkelandBis}
 that $\varphi_\eps$ satisfies the following properties.
\begin{enumerate}[\upshape 1)]
\item $\varphi_\eps\in\boldsymbol C^{1,2}(\boldsymbol\Lambda)$ and is bounded.
\item $\left|\partial_t^H\varphi_\eps(t,\boldsymbol x)\right|\leq2\left(2\hspace{0.5mm}T+\frac{2}{\textup{e}}\right)$, for every $(t,\boldsymbol x)\in[0,T)\times C([0,T];\R^d)$.
\item For every $i,j=1,\ldots,d$, $\partial^V_{x_i}\varphi_\eps$ is bounded by the constant $2\,(1+C_\zeta)$ and $\partial^V_{x_i x_j}\varphi_\eps$ is bounded by the constant $2\,(1+C_\zeta)\Big(\sqrt{\frac{2}{\pi}}+2\Big)$.
\end{enumerate}
In particular, $\varphi_\eps\in\boldsymbol C_{\textup{pol}}^{1,2}(\boldsymbol\Lambda)$.

\vspace{1mm}

\noindent\textsc{Step IV}. We prove below that $\bar t<T$. As a matter of fact, by item ii) of \textsc{Step III} we have
\begin{equation} \label{E11}
\big(u^\lambda-(v_{N_0}^\lambda+\delta\varphi_\eps)\big)(\bar t,\boldsymbol{\bar x}) \ \geq \ (u^\lambda-v_{N_0}^\lambda)(t_0,\boldsymbol x_0).
\end{equation}
On the other hand, if $\bar t=T$ we obtain
\begin{equation} \label{E12}
\big(u^\lambda-(v_{N_0}^\lambda+\delta\varphi_\eps)\big)(\bar t,\boldsymbol{\bar x}) \ = \ \text{e}^{\lambda T}(\xi(\boldsymbol{\bar x}) - \xi_{N_0}(\boldsymbol{\bar x})) - \delta\varphi_\eps(T,\boldsymbol{\bar x}) \ \leq \ \text{e}^{\lambda T}(\xi(\boldsymbol{\bar x}) - \xi_{N_0}(\boldsymbol{\bar x})),
\end{equation}
where the latter inequality comes from the fact that $\varphi_\eps\geq0$. Hence,
by \eqref{E11} and \eqref{E12} we get
\[
\text{e}^{\lambda t_0}(u - v_{N_0})(t_0,\boldsymbol x_0) \ \leq \ \text{e}^{\lambda T}(\xi(\boldsymbol{\bar x}) - \xi_{N_0}(\boldsymbol{\bar x})).
\]
Letting $\eps\rightarrow0$, it follows from item i) above with
$n= 0$ and \eqref{Ineq12_bis}
that $d_\infty((\bar t,\boldsymbol{\bar x}),(t_0,\boldsymbol x_0))\rightarrow0$. Therefore, letting $\eps\rightarrow0$ in the previous inequality, we obtain
\[
\text{e}^{\lambda t_0}(u - v_{N_0})(t_0,\boldsymbol x_0) \ \leq \ \text{e}^{\lambda T}(\xi(\boldsymbol x_0) - \xi_{N_0}(\boldsymbol x_0)).
\]
By \eqref{xi-xi_N_0}, we end up with $\text{e}^{\lambda t_0}\leq\frac{1}{2}\,\text{e}^{\lambda T}$. Letting $\lambda\rightarrow0$, we find a contradiction.

\vspace{1mm}
\noindent\textsc{Step V}.
Here again $\lambda > 0$ is fixed.
By \eqref{maximum_proof} and the definition of viscosity subsolution of \eqref{PPDE_lambda}
applied to $u^\lambda$
at the point $(\bar t,\boldsymbol{\bar x})$ with test function $v_{N_0}^\lambda+\delta\varphi_\eps$, we obtain
\[
- \mathcal L(v_{N_0}^\lambda + \delta\varphi_\eps)(\bar t,\boldsymbol{\bar x}) + \lambda\,u^\lambda(\bar t,\boldsymbol{\bar x}) \ \leq \ 0.
\]
Recalling that $v_{N_0}^\lambda$ is a classical (smooth) solution of equation \eqref{PPDE_lambda} with $\xi^\lambda$ replaced by $\xi_{N_0}^\lambda$, we find
\[
\lambda\,(u^\lambda - v_{N_0}^\lambda)(\bar t,\boldsymbol{\bar x}) \ \leq \ \delta\,\mathcal L \varphi_\eps(\bar t,\boldsymbol{\bar x}).
\]
By item ii) in \textsc{Step III} (namely \eqref{E11}),
subtracting from both sides the quantity $\lambda\delta\varphi_\eps
(\bar t,\boldsymbol{\bar x}),$ we obtain
\[
\lambda\,(u^\lambda - v_{N_0}^\lambda)(t_0,\boldsymbol x_0) \ \leq \ \lambda\,\big(u^\lambda - (v_{N_0}^\lambda + \delta\varphi_\eps)\big)(\bar t,\boldsymbol{\bar x}) \ \leq \ \delta\,\mathcal L \varphi_\eps(\bar t,\boldsymbol{\bar x}) - \lambda\,\delta\,\varphi_\eps(\bar t,\boldsymbol{\bar x}).
\]
Recalling that $\varphi_\eps\geq0$, we see that
\[
\lambda\,(u^\lambda - v_{N_0}^\lambda)(t_0,\boldsymbol x_0) \ \leq \ \lambda\,\big(u^\lambda - (v_{N_0}^\lambda + \delta\varphi_\eps)\big)(\bar t,\boldsymbol{\bar x}) \ \leq \ \delta\,\mathcal L \varphi_\eps(\bar t,\boldsymbol{\bar x}).
\]
From items 2) and 3) above, it follows that $\mathcal L \varphi_\eps(\bar t,\boldsymbol{\bar x})$ is bounded by a constant (not depending on $\eps$, $\delta$, $\lambda$). Therefore, letting $\delta\rightarrow0^+$, taking into account
the notations of \textsc{Step II}, we have
\[
  \lambda\,\text{e}^{\lambda t_0}\,(u - v_{N_0})(t_0,\boldsymbol x_0) \ =
  \lambda \, (u^\lambda - v_{N_0}^\lambda)(t_0,\boldsymbol x_0) \ \leq \ 0,
\]
which gives a contradiction to \eqref{Contradiction}.
\end{proof}

As a direct consequence of the comparison theorem (Theorem \ref{T:Comparison}), we obtain the following uniqueness result.

\begin{cor}\label{C:Uniqueness}
Under Assumption {\bf (A)}, the function $v$ in \eqref{v} is the unique (path-dependent) viscosity solution of equation \eqref{PPDE}, where uniqueness holds in the class of all continuous and bounded functions from $\boldsymbol\Lambda$ to $\R$.
\end{cor}
\begin{proof}
By Theorem \ref{T:Existence} we know that $v$ is continuous and bounded, moreover it is a (path-dependent) viscosity solution of equation \eqref{PPDE}.

Now, let $u\colon\boldsymbol\Lambda\rightarrow\R$ be a continuous and bounded function such that $u$ is a (path-dependent) viscosity solution of equation \eqref{PPDE}. Then, in particular, $u$ (resp. $v$) is a (path-dependent) viscosity subsolution $($resp. supersolution$)$ of equation \eqref{PPDE}. As a consequence, by the comparison theorem (Theorem \ref{T:Comparison}) we deduce that $u\leq v$ on $\boldsymbol\Lambda$. Changing the roles of $u$ and $v$ we get the opposite inequality, from which we conclude that $u\equiv v$.
\end{proof}

\appendix

\section{Functional It\^o's formula}
\label{AppA}

We start with a definition and a technical result.

\begin{defn}
We denote by $\boldsymbol C^{1,0}(\boldsymbol{\hat\Lambda})$ the set of $\hat u\in\boldsymbol C(\boldsymbol{\hat\Lambda})$ such that $\partial_t^H\hat u$ exists everywhere on $\boldsymbol{\hat\Lambda}$ and is continuous.
\end{defn}

\begin{lem}\label{L:C1,0}
Let $\hat u\in\boldsymbol C^{1,0}(\boldsymbol{\hat\Lambda})$. Then, for every $(t,\boldsymbol{\hat x})\in[0,T)\times D([0,T];\R^d)$, the map $\phi\colon[0,T-t]\rightarrow\R$, defined as
\[
\phi(a) \ := \ \hat u(t+a,\boldsymbol{\hat x}(\cdot\wedge t)), \qquad \forall\,a\in[0,T-t],
\]
is in $C^1([0,T-t])$ and
\[
\phi'(a) \ = \ \partial_t^H\hat u(t+a,\boldsymbol{\hat x}(\cdot\wedge t)), \qquad \forall\,a\in[0,T-t].
\]
\end{lem}
\begin{proof}
Let $a\in[0,T-t)$. We have, for any $\delta\in(0,T-t-a]$,
\[
\frac{\phi(a+\delta)-\phi(a)}{\delta} = \frac{\hat u(t+a+\delta,\boldsymbol{\hat x}(\cdot\wedge t))-\hat u(t+a,\boldsymbol{\hat x}(\cdot\wedge t))}{\delta} \underset{\delta\rightarrow0^+}{\longrightarrow} \partial_t^H\hat u(t+a,\boldsymbol{\hat x}(\cdot\wedge t)).
\]
This shows that $\phi$ is right-differentiable on $[0,T-t)$ and that such a right-derivative is continuous on $[0,T-t)$. Then, it follows for instance from Corollary 1.2, Chapter 2, in \cite{pazy83} that $\phi\in C^1([0,T-t))$. Finally, at $a=T-t$, we have
\[
\lim_{a\rightarrow(T-t)^-} \phi'(a) \ = \ \lim_{a\rightarrow(T-t)^-} \partial_t^H\hat u(t+a,\boldsymbol{\hat x}(\cdot\wedge t)) \ = \ \partial_t^H\hat u(T,\boldsymbol{\hat x}(\cdot\wedge t)).
\]
This implies that $\phi\in C^1([0,T-t])$ and concludes the proof.
\end{proof}

We now report the proof of Theorem \ref{T:ItoLifted}.

\begin{proof}[Proof of Theorem \ref{T:ItoLifted}]
Fix $t\in(0,T]$. For every $\boldsymbol{\hat x}\in D([0,T];\R^d)$, denote by $\boldsymbol{\hat x}(\cdot\wedge t-)$ the path
\begin{equation}\label{xt-}
\boldsymbol{\hat x}(s\wedge t-) \ := \
\begin{cases}
\boldsymbol{\hat x}(s), \qquad &0\leq s<t, \\
\boldsymbol{\hat x}(t-), \qquad &t\leq s\leq T.
\end{cases}
\end{equation}
When $t=0$, we set $\boldsymbol{\hat x}(\cdot\wedge 0-):=\boldsymbol{\hat x}(\cdot\wedge0)$. We split the rest of the proof into several steps.

\vspace{1mm}

\noindent\textbf{\emph{Step 1.}} \emph{Piecewise constant approximation of $\boldsymbol X$.} For every $n\in\N$, consider the dyadic partition $\{t_0^n,\ldots,t_\ell^n,\ldots,t_{2^n}^n\}$ of $[0,t]$, with $t_\ell^n=\frac{\ell}{2^n}t$, for every $\ell=0,\ldots,2^n$. Then, we consider the piecewise constant approximation of $\boldsymbol X$
\begin{equation}\label{X^n}
\boldsymbol X_s^n \ = \ \sum_{\ell=0}^{2^n-1} \boldsymbol X_{t_\ell^n}\,1_{[t_\ell^n,t_{\ell+1}^n)}(s) + \boldsymbol X_t\,1_{[t,T]}(s), \qquad \forall\,s\in[0,T].
\end{equation}
Now, recalling the definition of $\boldsymbol{\hat x}(\cdot\wedge t-)$ in \eqref{xt-}, we notice that
\begin{equation}\label{ut-u0}
\hat u(t,\boldsymbol X_{\cdot\wedge t-}^n) - \hat u(0,\boldsymbol X) \ = \ \sum_{\ell=0}^{2^n-1} \Big(\hat u\big(t_{\ell+1}^n,\boldsymbol X_{\cdot\wedge t_{\ell+1}^n-}^n\big) - \hat u\big(t_\ell^n,\boldsymbol X_{\cdot\wedge t_\ell^n-}^n\big)\Big),
\end{equation}
where we have used the telescoping property and the equality $\hat u(0,\boldsymbol X)=\hat u(0,\boldsymbol X^n)$. For every $\ell=0,\ldots,2^n-1$, we have
\[
\hat u\big(t_{\ell+1}^n,\boldsymbol X_{\cdot\wedge t_{\ell+1}^n-}^n\big) - \hat u\big(t_\ell^n,\boldsymbol X_{\cdot\wedge t_\ell^n-}^n\big) \ = \ I_\ell^n + J_\ell^n,
\]
where
\begin{align*}
I_\ell^n \ &:= \ \hat u\big(t_{\ell+1}^n,\boldsymbol X_{\cdot\wedge t_{\ell+1}^n-}^n\big) - \hat u\big(t_\ell^n,\boldsymbol X_{\cdot\wedge t_{\ell+1}^n-}^n\big) \ = \ \hat u\big(t_{\ell+1}^n,\boldsymbol X_{\cdot\wedge t_\ell^n}^n\big) - \hat u\big(t_\ell^n,\boldsymbol X_{\cdot\wedge t_\ell^n}^n\big), \\
J_\ell^n \ &:= \ \hat u\big(t_\ell^n,\boldsymbol X_{\cdot\wedge t_{\ell+1}^n-}^n\big) - \hat u\big(t_\ell^n,\boldsymbol X_{\cdot\wedge t_\ell^n-}^n\big).
\end{align*}
Notice that when $\ell=0$ we have $J_\ell^n=0$. Moreover, when $\ell=1,\ldots,2^n-1$, $J_\ell^n$ can be written as
\[
J_\ell^n \ = \ \hat u\big(t_\ell^n,\boldsymbol X_{\cdot\wedge t_\ell^n}^n\big) - \hat u\big(t_\ell^n,\boldsymbol X_{\cdot\wedge t_{\ell-1}^n}^n\big).
\]
\noindent\textbf{\emph{Step 2.}} \emph{Definitions of $I^n$, $J^{n,1}$, $J^{n,2}$, $J^{n,3}$, $J^{n,4}$.} By Lemma \ref{L:C1,0} we have
\[
I_\ell^n \ = \ \int_{t_\ell^n}^{t_{\ell+1}^n} \partial_t^H \hat u\big(s,\boldsymbol X_{\cdot\wedge t_\ell^n}^n\big)\,ds \ = \ \int_{t_\ell^n}^{t_{\ell+1}^n} \partial_t^H \hat u\big(s,\boldsymbol X_{\cdot\wedge s}^n\big)\,ds.
\]
Then, we define
\begin{equation}\label{I^n}
I^n \ := \ \sum_{\ell=0}^{2^n-1} I_\ell^n \ = \ \sum_{\ell=0}^{2^n-1} \int_{t_\ell^n}^{t_{\ell+1}^n} \partial_t^H \hat u\big(s,\boldsymbol X_{\cdot\wedge s}^n\big)\,ds \ = \ \int_0^t \partial_t^H \hat u\big(s,\boldsymbol X_{\cdot\wedge s}^n\big)\,ds.
\end{equation}
On the other hand, when $\ell=1,\ldots,2^n-1$ the term $J_\ell^n$ can be written as
\[
J_\ell^n \ = \ J_\ell^{n,1} + J_\ell^{n,2} + J_\ell^{n,3} + J_\ell^{n,4},
\]
where (notice that $\boldsymbol X^n=(X^{n,1},\ldots,X^{n,d})$)
\begin{align*}
J_\ell^{n,1} &= \sum_{i=1}^d \partial_{x_i}^V \hat u\big(t_\ell^n,\boldsymbol X_{\cdot\wedge t_{\ell-1}^n}^n\big)\,\big(X_{t_\ell^n}^{n,i} - X_{t_{\ell-1}^n}^{n,i}\big) \ = \ \sum_{i=1}^d \partial_{x_i}^V \hat u\big(t_\ell^n,\boldsymbol X_{\cdot\wedge t_{\ell-1}^n}^n\big)\,\big(X_{t_\ell^n}^i - X_{t_{\ell-1}^n}^i\big), \\
J_\ell^{n,2} &= \frac{1}{2}\sum_{i,j=1}^d \partial_{x_ix_j}^V \hat u\big(t_\ell^n,\boldsymbol X_{\cdot\wedge t_{\ell-1}^n}\big)\,\big(X_{t_\ell^n}^i - X_{t_{\ell-1}^n}^i\big)\big(X_{t_\ell^n}^j - X_{t_{\ell-1}^n}^j\big), \\
J_\ell^{n,3} &= \frac{1}{2}\sum_{i,j=1}^d \Big(\partial_{x_ix_j}^V \hat u\big(t_\ell^n,\boldsymbol X_{\cdot\wedge t_{\ell-1}^n}^n\big) - \partial_{x_ix_j}^V \hat u\big(t_\ell^n,\boldsymbol X_{\cdot\wedge t_{\ell-1}^n}\big)\Big)\big(X_{t_\ell^n}^i - X_{t_{\ell-1}^n}^i\big)\big(X_{t_\ell^n}^j - X_{t_{\ell-1}^n}^j\big), \\
J_\ell^{n,4} &= \frac{1}{2}\sum_{i,j=1}^d \int_0^1 \Big(\partial_{x_ix_j}^V \hat u\big(t_\ell^n,\boldsymbol X_{\cdot\wedge t_{\ell-1}^n}^n + a\big(\boldsymbol X_{t_\ell^n} - \boldsymbol X_{t_{\ell-1}^n}\big)1_{[t_\ell^n,T]}\big) \\
&\quad \ - \partial_{x_ix_j}^V \hat u\big(t_\ell^n,\boldsymbol X_{\cdot\wedge t_{\ell-1}^n}^n\big)\Big)\,\big(X_{t_\ell^n}^i - X_{t_{\ell-1}^n}^i\big)\big(X_{t_\ell^n}^j - X_{t_{\ell-1}^n}^j\big)\,da.
\end{align*}
Then, we define
\begin{equation}\label{J^n,k}
J^{n,k} \ := \ \sum_{\ell=1}^{2^n-1} J_\ell^{n,k},
\end{equation}
for every $k=1,2,3,4$. As a consequence, by \eqref{ut-u0}, \eqref{I^n}, \eqref{J^n,k}, we obtain
\begin{equation}\label{u=IJ}
\hat u(t,\boldsymbol X_{\cdot\wedge t-}^n) - \hat u(0,\boldsymbol X) \ = \ I^n + J^{n,1} + J^{n,2} + J^{n,3} + J^{n,4}.
\end{equation}
Notice that
\begin{equation}\label{J^n,1}
J^{n,1} \ = \ \sum_{\ell=1}^{2^n-1} \sum_{i=1}^d \partial_{x_i}^V \hat u\big(t_\ell^n,\boldsymbol X_{\cdot\wedge t_{\ell-1}^n}^n\big)\,\big(X_{t_\ell^n}^i - X_{t_{\ell-1}^n}^i\big) \ = \ \sum_{i=1}^d \int_0^t Z_s^{n,i}\,dX_s^i,
\end{equation}
where
\[
Z_s^{n,i} \ := \ \sum_{\ell=1}^{2^n-1} \partial_{x_i}^V \hat u\big(t_\ell^n,\boldsymbol X_{\cdot\wedge s}^n\big) 1_{[t_{\ell-1}^n,t_\ell^n)}(s), \qquad \forall\,s\in[0,t].
\]
\noindent\textbf{\emph{Step 3.}} \emph{Convergence.} Before studying the convergence of the various terms in \eqref{u=IJ}, we make some observations. Recall that for a.e. $\omega\in\Omega$ the trajectory $\boldsymbol X(\omega)$ is continuous on $[0,T]$, so that it admits a modulus of continuity $\rho_{\boldsymbol X(\omega)}\colon[0,+\infty)\rightarrow[0,+\infty)$. Then, it holds that
\begin{equation}\label{d_infty_1}
\hat d_\infty\big((t,\boldsymbol X_{\cdot\wedge t-}^n(\omega)),(t,\boldsymbol X_{\cdot\wedge t}(\omega))\big) \ = \ \big\|\boldsymbol X_{\cdot\wedge t-}^n(\omega) - \boldsymbol X_{\cdot\wedge t}(\omega)\big\|_\infty \ \leq \ \rho_{\boldsymbol X(\omega)}(2^{-n}),
\end{equation}
for a.e. $\omega\in\Omega$. Similarly, we have
\begin{equation}\label{d_infty_2}
\hat d_\infty\big((s,\boldsymbol X_{\cdot\wedge s}^n(\omega)),(s,\boldsymbol X_{\cdot\wedge s}(\omega))\big) \ = \ \big\|\boldsymbol X_{\cdot\wedge s}^n(\omega) - \boldsymbol X_{\cdot\wedge s}(\omega)\big\|_\infty \ \leq \ \rho_{\boldsymbol X(\omega)}(2^{-n}),
\end{equation}
for a.e. $\omega\in\Omega$ and for every $s\in[0,t]$. Moreover, for every $\ell=1,\ldots,2^{n}-1$ and $t_{\ell-1}^n\leq s\leq t_\ell^n$, it holds that
\begin{align}\label{d_infty_3}
\hat d_\infty\big((t_\ell^n,\boldsymbol X_{\cdot\wedge s}^n(\omega)),(s,\boldsymbol X_{\cdot\wedge s}(\omega))\big) \ &= \ |t_\ell^n - s| + \big\|\boldsymbol X_{\cdot\wedge s}^n(\omega) - \boldsymbol X_{\cdot\wedge s}(\omega)\big\|_\infty \notag \\
&\leq \ 2^{-n} + \rho_{\boldsymbol X(\omega)}(2^{-n}),
\end{align}
for a.e. $\omega\in\Omega$. Now, for every $\boldsymbol x\in C([0,T];\R^d)$ and $n\in\N$, set (as in \eqref{X^n})
\[
\boldsymbol x^n(s) \ = \ \sum_{\ell=0}^{2^n-1} \boldsymbol x(t_\ell^n)\,1_{[t_\ell^n,t_{\ell+1}^n)}(s) + \boldsymbol x(t)\,1_{[t,T]}(s), \qquad \forall\,s\in[0,T].
\]
For such an $\boldsymbol x$, define
\begin{align}
K_1(\boldsymbol x) \ &:= \ \big\{(s,\boldsymbol{\hat x})\in\boldsymbol{\hat\Lambda}\colon \boldsymbol{\hat x}\,=\,\boldsymbol x^n(\cdot\wedge s),\text{ for some }n\in\N\big\}, \label{K_1(x)} \\
K_2(\boldsymbol x) \ &:= \ \big\{(r,\boldsymbol{\hat x})\in\boldsymbol{\hat\Lambda}\colon \big(r,\boldsymbol{\hat x}\big)\,=\,\big(t_\ell^n,\boldsymbol x^n(\cdot\wedge s)\big),\text{ for some }n\in\N, \notag \\
&\hspace{4cm}\ell=1,\ldots,2^n-1,\,s\in[t_{\ell-1}^n,t_\ell^n)\big\}, \label{K_2(x)} \\
K_3(\boldsymbol x) \ &:= \ \big\{(r,\boldsymbol{\hat x})\in\boldsymbol{\hat\Lambda}\colon \big(r,\boldsymbol{\hat x}\big)\,=\,\big(t_\ell^n,\boldsymbol x^n(\cdot\wedge t_{\ell-1}^n)+a(\boldsymbol x(t_\ell^n)-\boldsymbol x(t_{\ell-1}^n))1_{[t_\ell^n,T]}(\cdot)\big), \notag \\
&\hspace{4cm}\text{for some }n\in\N,\,\ell=1,\ldots,2^n-1,\,a\in[0,1]\big\}. \label{K_3(x)}
\end{align}
Notice that $K_2(\boldsymbol x)\subset K_3(\boldsymbol x)$ (observe that $\boldsymbol x^n(\cdot\wedge s)=\boldsymbol x^n(\cdot\wedge t_{\ell-1}^n)$, whenever $s\in[t_{\ell-1}^n,t_\ell^n)$, and consider the pairs in $K_3(\boldsymbol x)$ with $a=0$). It is easy to see that $K_1(\boldsymbol x)$ and $K_3(\boldsymbol x)$ (and a fortiori $K_2(\boldsymbol x)$) are relatively compact subsets of $(\boldsymbol{\hat\Lambda},\hat d_\infty)$. Now, we study separately the convergence of the various terms appearing in \eqref{u=IJ}.

\vspace{1mm}

\noindent\textbf{\emph{Substep 3.1.}} \emph{Convergence of $\hat u(t,\boldsymbol X_{\cdot\wedge t-}^n) - \hat u(0,\boldsymbol X)$.} Since $\hat u$ is continuous on $(\boldsymbol{\hat\Lambda},\hat d_\infty)$ and \eqref{d_infty_1} holds, we deduce that
\[
\hat u(t,\boldsymbol X_{\cdot\wedge t-}^n) - \hat u(0,\boldsymbol X) \ \underset{n\rightarrow\infty}{\overset{\P\text{-a.s.}}{\longrightarrow}} \ \hat u(t,\boldsymbol X_{\cdot\wedge t}) - \hat u(0,\boldsymbol X).
\]
\noindent\textbf{\emph{Substep 3.2.}} \emph{Convergence of $I^n$.} Recalling \eqref{K_1(x)}, we see that for a.e. $\omega\in\Omega$ the set $K_1(\boldsymbol X(\omega))$ is a relatively compact subset of $\boldsymbol{\hat\Lambda}$. Since $\partial_t^H\hat u$ is continuous on $(\boldsymbol{\hat\Lambda},\hat d_\infty)$, for such an $\omega$ we see that $\partial_t^H\hat u(s,\boldsymbol X_{\cdot\wedge s}^n(\omega))$ is uniformly bounded with respect to $s$ and $n$. In addition, using again the continuity of $\partial_t^H\hat u$ and \eqref{d_infty_2}, we deduce that for a.e. $\omega\in\Omega$ the sequence of maps $s\mapsto\partial_t^H\hat u(s,\boldsymbol X_{\cdot\wedge s}^n(\omega))$ converges pointwise to $s\mapsto\partial_t^H\hat u(s,\boldsymbol X_{\cdot\wedge s}(\omega))$. In conclusion, we can apply Lebesgue's dominated convergence theorem, which yields
\[
I^n \ = \ \int_0^t \partial_t^H \hat u\big(s,\boldsymbol X_{\cdot\wedge s}^n\big)\,ds \ \underset{n\rightarrow\infty}{\overset{\P\text{-a.s.}}{\longrightarrow}} \ \int_0^t \partial_t^H \hat u\big(s,\boldsymbol X_{\cdot\wedge s}\big)\,ds.
\]
\noindent\textbf{\emph{Substep 3.3.}} \emph{Convergence of $J^{n,1}$.} In order to study the convergence of the sum of stochastic integrals in \eqref{J^n,1}, we write each $X^i$ as $V^i+M^i$, where $V^i$ is a bounded variation process and $M^i$ is a continuous local martingale, so that the stochastic integral with respect to $X^i$ can be written as the sum of two integrals with respect to $V^i$ and $M^i$, respectively.

\vspace{1mm}

\noindent\emph{Convergence of $\int_0^t Z_s^{n,i}\,dV_s^i$.} Recalling \eqref{K_2(x)}, we see that for a.e.
$\omega\in\Omega$ the set $K_2(\boldsymbol X(\omega))$ is a relatively compact subset of $\boldsymbol{\hat\Lambda}$. Then, reasoning as in \textbf{\emph{Substep 3.2}}, using the continuity of $\partial_{x_i}^V\hat u$ and also \eqref{d_infty_3}, we see that we can apply Lebesgue's dominated convergence theorem, from which we get
\[
\int_0^t Z_s^{n,i}\,dV_s^i \ \underset{n\rightarrow\infty}{\overset{\P\text{-a.s.}}{\longrightarrow}} \ \int_0^t \partial_{x_i}^V \hat u\big(s,\boldsymbol X_{\cdot\wedge s}\big)\,dV_s^i.
\]
\noindent\emph{Convergence of $\int_0^t Z_s^{n,i}\,dM_s^i$.} Let $\langle M^i\rangle$ denote the quadratic variation of $M^i$. Reasoning as in the proof of the convergence of $\int_0^t Z_s^{n,i}\,dV_s^i$, we obtain by Lebesgue's dominated convergence theorem
\[
  \int_0^t \big|Z_s^{n,i} - \hat u\big(s,\boldsymbol X_{\cdot\wedge s}
  \big)\big|^2\,d\langle M^i\rangle_s \
  \underset{n\rightarrow\infty}{\overset{\P\text{-a.s.}}{\longrightarrow}} \ 0.
\]
By Proposition 2.26, Chapter 3, in \cite{KaratzasShreve}, we deduce the convergence in probability (even u.c.p.)
\[
\int_0^t Z_s^{n,i}\,dM_s^i \ \underset{n\rightarrow\infty}{\overset{\P}{\longrightarrow}} \ \int_0^t \partial_{x_i}^V \hat u\big(s,\boldsymbol X_{\cdot\wedge s}\big)\,dM_s^i.
\]
\noindent\textbf{\emph{Substep 3.4.}} \emph{Convergence of $J^{n,2}$.} For every $n\in\N$ and $i,j=1,\ldots,d$, consider the process $\langle X^i,X^j\rangle^n$ defined as
\[
\langle X^i,X^j\rangle_s^n \ := \ \sum_{\ell=1}^{2^n} \big(X_{t_\ell^n}^i - X_{t_{\ell-1}^n}^i\big)\big(X_{t_\ell^n}^j - X_{t_{\ell-1}^n}^j\big)\,1_{[t_{\ell-1}^n,T]}(s), \qquad \forall\,s\in[0,T].
\]
Notice that
\[
J_\ell^{n,2} \ = \ \frac{1}{2}\sum_{i,j=1}^d \int_{[t_{\ell-1}^n,t_\ell^n)} \partial_{x_ix_j}^V \hat u\big(s,\boldsymbol X_{\cdot\wedge s}\big)\,d\langle X^i,X^j\rangle_s^n
\]
and, therefore,
\[
J^{n,2} \ = \ \frac{1}{2}\sum_{i,j=1}^d \int_{[0,(1-2^{-n})t)} \partial_{x_ix_j}^V \hat u\big(s,\boldsymbol X_{\cdot\wedge s}\big)\,d\langle X^i,X^j\rangle_s^n.
\]
From Theorem 23 in Section II.6 of \cite{protter05}, it is easy
to deduce that 
\[
\sup_{s\in[0,t]}\big|\langle X^i,X^j\rangle_s^n - \langle X^i,X^j\rangle_s\big| \ \underset{n\rightarrow\infty}{\overset{\P}{\longrightarrow}} \ 0,
\]
where $\langle X^i,X^j\rangle$ is the covariation of $X^i$ and $X^j$. Then, up to a subsequence,
there exists a $\P$-null set $N$ such that, whenever $\omega\notin N$, $\langle X^i,X^j\rangle_s^n(\omega)\rightarrow\langle X^i,X^j\rangle_s(\omega)$, for all $s\in[0,t]$. For such an $\omega$, this implies that the measure $d\langle X^i,X^j\rangle^n(\omega)$ converges weakly to the measure $d\langle X^i,X^j\rangle^n(\omega)$. Since $\partial_{x_ix_j}^V \hat u$ is continuous, we deduce (up to a subsequence)
\[
J^{n,2} \ \underset{n\rightarrow\infty}{\overset{\P\text{-a.s.}}{\longrightarrow}} \ \frac{1}{2}\sum_{i,j=1}^d \int_0^t \partial_{x_ix_j}^V \hat u\big(s,\boldsymbol X_{\cdot\wedge s}\big)\,d\langle X^i,X^j\rangle_s.
\]
\noindent\textbf{\emph{Substep 3.5.}} \emph{Convergence of $J^{n,3}$.} We have (we set $\langle X^i\rangle^n:=\langle X^i,X^i\rangle^n$, for every $i=1,\ldots,d$)
\[
|J^{n,3}| \ \leq \ \frac{1}{4}\sum_{i,j=1}^d \sup_{\ell=1,\ldots,2^n-1}\Big|\partial_{x_ix_j}^V \hat u\big(t_\ell^n,\boldsymbol X_{\cdot\wedge t_{\ell-1}^n}^n\big) - \partial_{x_ix_j}^V \hat u\big(t_\ell^n,\boldsymbol X_{\cdot\wedge t_{\ell-1}^n}\big)\Big|\big(\langle X^i\rangle_T^n + \langle X^j\rangle_T^n\big).
\]
Recalling \eqref{K_2(x)}, we see that for a.e. $\omega\in\Omega$ the set $K_2(\boldsymbol X(\omega))$ is a relatively compact subset of $\boldsymbol{\hat\Lambda}$. Since $\partial_{x_ix_j}^V \hat u$ is continuous on $(\boldsymbol{\hat\Lambda},\hat d_\infty)$, for a.e. $\omega\in\Omega$ there exists a (non-decreasing) modulus of continuity $\rho_{i,j}^\omega\colon[0,+\infty)\rightarrow[0,+\infty)$ such that
\[
|J^{n,3}(\omega)| \leq \frac{1}{4}\sum_{i,j=1}^d \sup_{\ell=1,\ldots,2^n-1} \rho_{i,j}^\omega\big(\big\|\boldsymbol X_{\cdot\wedge t_{\ell-1}^n}^n(\omega) - \boldsymbol X_{\cdot\wedge t_{\ell-1}^n}(\omega)\big\|_\infty\big)\big(\langle X^i\rangle_T^n(\omega) + \langle X^j\rangle_T^n(\omega)\big).
\]
From \textbf{\emph{Substep 3.4}} we know that, up to a subsequence, $\langle X^i\rangle_T^n(\omega)$ converges to $\langle X^i\rangle_T(\omega)$, for a.e. $\omega\in\Omega$ and for every $i=1,\ldots,d$. On the other hand, by \eqref{d_infty_2} we have that $\sup_{\ell=1,\ldots,2^n-1}\rho_{i,j}^\omega(\|\boldsymbol X_{\cdot\wedge t_{\ell-1}^n}^n(\omega)-\boldsymbol X_{\cdot\wedge t_{\ell-1}^n}(\omega)\|_\infty)\leq\rho_{i,j}^\omega(\rho_{\boldsymbol X(\omega)}(2^{-n}))$, for a.e. $\omega\in\Omega$. Hence, up to a subsequence,
\[
J^{n,3} \ \underset{n\rightarrow\infty}{\overset{\P\text{-a.s.}}{\longrightarrow}} \ 0.
\]
\noindent\textbf{\emph{Substep 3.6.}} \emph{Convergence of $J^{n,4}$.} We have (as in \textbf{\emph{Substep 3.5}} we set $\langle X^i\rangle^n:=\langle X^i,X^i\rangle^n$, for every $i=1,\ldots,d$)
\begin{align*}
|J^{n,4}| \ \leq \ \frac{1}{4}\sum_{i,j=1}^d \big(\langle X^i\rangle_T^n + \langle X^j\rangle_T^n\big)\int_0^1 \sup_{\ell=1,\ldots,2^n-1}\Big|\partial_{x_ix_j}^V \hat u\big(t_\ell^n,\boldsymbol X_{\cdot\wedge t_{\ell-1}^n}^n\big)& \\
- \ \partial_{x_ix_j}^V \hat u\big(t_\ell^n,\boldsymbol X_{\cdot\wedge t_{\ell-1}^n}^n + a\big(\boldsymbol X_{t_\ell^n} - &\boldsymbol X_{t_{\ell-1}^n}\big)1_{[t_\ell^n,T]}\big)\Big|\,da.
\end{align*}
Recalling \eqref{K_3(x)}, we see that for a.e. $\omega\in\Omega$ the set $K_3(\boldsymbol X(\omega))$ is a relatively compact subset of $\boldsymbol{\hat\Lambda}$. Since $\partial_{x_ix_j}^V\hat u$ is continuous on $(\boldsymbol{\hat\Lambda},\hat d_\infty)$, for a.e. $\omega\in\Omega$ there exists a (non-decreasing) modulus of continuity $\bar\rho_{i,j}^\omega\colon[0,+\infty)\rightarrow[0,+\infty)$ such that
\[
|J^{n,4}(\omega)| \leq \frac{1}{4}\sum_{i,j=1}^d \big(\langle X^i\rangle_T^n(\omega) + \langle X^j\rangle_T^n(\omega)\big)\int_0^1 \sup_{\ell=1,\ldots,2^n-1}\bar\rho_{i,j}^\omega\big(a\big|\boldsymbol X_{t_\ell^n}(\omega) - \boldsymbol X_{t_{\ell-1}^n}(\omega)\big|\big)\,da.
\]
We know from \textbf{\emph{Substep 3.4}} that, up to a subsequence, $\langle X^i\rangle_T^n(\omega)$ converges to $\langle X^i\rangle_T(\omega)$, for a.e. $\omega\in\Omega$ and for every $i=1,\ldots,d$. On the other hand, $\sup_{\ell=1,\ldots,2^n-1}\bar\rho_{i,j}^\omega(a|\boldsymbol X_{t_\ell^n}(\omega) - \boldsymbol X_{t_{\ell-1}^n}(\omega)|)\leq\bar\rho_{i,j}^\omega(\rho_{\boldsymbol X(\omega)}(2^{-n}))$, for a.e. $\omega\in\Omega$, where we recall that $\rho_{\boldsymbol X(\omega)}$ is the modulus of continuity of the trajectory $\boldsymbol X(\omega)$. Hence, up to a subsequence,
\[
J^{n,4} \ \underset{n\rightarrow\infty}{\overset{\P\text{-a.s.}}{\longrightarrow}} \ 0.
\]
\end{proof}

\section{Consistency}
\label{AppB}

\begin{proof}[Proof of Lemma \ref{L:Consistency}]
The claim concerning the horizontal derivatives follows directly from their definition (Definition \ref{D:FunctionalDerivatives}-(i)).

It remains to prove the claim concerning the vertical derivatives. To this end, let $\boldsymbol X=(\boldsymbol X_t)_{t\in[0,T]}$, with $\boldsymbol X=(X^1,\ldots,X^d)$, be a $d$-dimensional continuous semimartingale on some filtered probability space $(\Omega,{\mathcal F},({\mathcal F}_t)_{t\in[0,T]},\P)$, where $({\mathcal F}_t)_{t\in[0,T]}$ satisfies the usual conditions. Since $\hat u_1,\hat u_2\in\boldsymbol C^{1,2}(\boldsymbol{\hat\Lambda})$, by Theorem \ref{T:ItoLifted}
 the functional It\^o formulae 
\begin{align*}
\hat u_1(t,\boldsymbol X) \ &= \ \hat u_1(0,\boldsymbol X) + \int_0^t \partial_t^H\hat u_1(s,\boldsymbol X)\,ds + \frac{1}{2}\sum_{i,j=1}^d \int_0^t \partial^V_{x_i x_j} \hat u_1(s,\boldsymbol X)\,d[X^i,X^j]_s \\
&\quad \ + \sum_{i=1}^d \int_0^t \partial^V_{x_i}\hat u_1(s,\boldsymbol X)\,dX_s^i, \hspace{2.4cm} \text{for all }\,0\leq t\leq T,\,\,\P\text{-a.s.}
\end{align*}
and
\begin{align*}
\hat u_2(t,\boldsymbol X) \ &= \ \hat u_2(0,\boldsymbol X) + \int_0^t \partial_t^H\hat u_2(s,\boldsymbol X)\,ds + \frac{1}{2}\sum_{i,j=1}^d \int_0^t \partial^V_{x_i x_j} \hat u_2(s,\boldsymbol X)\,d[X^i,X^j]_s \\
&\quad \ + \sum_{i=1}^d \int_0^t \partial^V_{x_i}\hat u_2(s,\boldsymbol X)\,dX_s^i, \hspace{2.4cm} \text{for all }\,0\leq t\leq T,\,\,\P\text{-a.s.}
\end{align*}
hold. Recalling that $\hat u_1$ and $\hat u_2$, together with their horizontal derivatives, coincide on continuous paths, identifying bounded variation and local martingale parts in the above formulae, (up to a $\P$-null set), for every $i,j=1,\ldots,d$ and any $t\in[0,T]$,
we have
\begin{equation}\label{FullSupport}
\partial^V_{x_i}\hat u_1(t,\boldsymbol X) \ = \ \partial^V_{x_i}\hat u_2(t,\boldsymbol X), \qquad\quad \partial^V_{x_i x_j} \hat u_1(t,\boldsymbol X) \ = \ \partial^V_{x_i x_j} \hat u_2(t,\boldsymbol X).
\end{equation}
Now, recall that $C([0,T];\R^d)$ is endowed with the uniform topology and let $\mathcal B(C([0,T];\R^d))$ denote the Borel $\sigma$-algebra on $C([0,T];\R^d)$. We also recall that given a probability measure $\mu\colon\mathcal B(C([0,T];\R^d))\rightarrow[0,1]$, the \emph{support} of $\mu$ is the smallest closed set $\mathcal S_\mu\subset C([0,T];\R^d)$ such that $\mu(\mathcal S_\mu)=1$. Consider a continuous semimartingale $\boldsymbol X=(\boldsymbol X_s)_{s\in[0,T]}$ whose law has support equal to $C([0,T];\R^d)$. An example of such an $\boldsymbol X$ is given by $\boldsymbol X_s:=\boldsymbol\eta+\boldsymbol W_s$, $s\in[0,T]$, where $\boldsymbol W=(\boldsymbol W_s)_{s\in[0,T]}$ is a $d$-dimensional Brownian motion, while $\boldsymbol\eta\colon\Omega\rightarrow\R^d$ is independent of $\boldsymbol W$ and has a standard normal multivariate distribution. Then, for every fixed $t\in[0,T]$, using equalities \eqref{FullSupport} with such a semimartingale $\boldsymbol X$, and exploiting the continuity of $\partial^V_{\boldsymbol x} \hat u_1$, $\partial^V_{\boldsymbol x} \hat u_2$, $\partial^V_{\boldsymbol x\boldsymbol x} \hat u_1$, $\partial^V_{\boldsymbol x\boldsymbol x} \hat u_2$, we obtain
\[
\partial^V_{\boldsymbol x} \hat u_1(t,\boldsymbol x) \ = \ \partial^V_{\boldsymbol x} \hat u_2(t,\boldsymbol x), \qquad\quad \partial^V_{\boldsymbol x\boldsymbol x} \hat u_1(t,\boldsymbol x) \ = \ \partial^V_{\boldsymbol x\boldsymbol x} \hat u_2(t,\boldsymbol x),
\]
for every $\boldsymbol x\in C([0,T];\R^d)$. Since the above equalities hold for every $t\in[0,T]$, the claim follows.
\end{proof}

\section{Smooth variational principle on $\boldsymbol\Lambda$}
\label{AppVar}

\subsection{Proof of Lemma \ref{L:rho_infty}}
\label{AppVar1}

\begin{proof}[Proof of Lemma \ref{L:rho_infty}] We split the proof into several steps.

\vspace{1mm}

\noindent\textsc{Step I}. \emph{Proof of item }1). We first notice that $\hat\kappa_\infty^{(t_0,\boldsymbol x_0)}$ is a non-anticipative map. Now, let $(t,\boldsymbol{\hat x})\in\boldsymbol{\hat\Lambda}$, $h\in\R\backslash\{0\}$, and $i=1,\ldots,d$ (recall that $\mathbf e_1,\ldots,\mathbf e_d$
 denotes the standard orthonormal basis of $\R^d$),
 then we have
\begin{align*}
&\frac{\hat\kappa_\infty^{(t_0,\boldsymbol x_0)}(t,\boldsymbol{\hat x} + h\,\mathbf e_i\,1_{[t,T]}) - \hat\kappa_\infty^{(t_0,\boldsymbol x_0)}(t,\boldsymbol{\hat x})}{h} \\
&= \ \int_{\R^d} \big\|\boldsymbol{\hat x}(\cdot\wedge t) - \boldsymbol x_0(\cdot\wedge t_0) - \mathbf z\,1_{[t,T]}\big\|_\infty\,\frac{\zeta(\mathbf z + h\,\mathbf e_i) - \zeta(\mathbf z)}{h}\,d\mathbf z \\
&\hspace{3cm} \overset{h\rightarrow0}{\longrightarrow} \ \int_{\R^d} \big\|\boldsymbol{\hat x}(\cdot\wedge t) - \boldsymbol x_0(\cdot\wedge t_0) - \mathbf z\,1_{[t,T]}\big\|_\infty\,\partial_{z_i}\zeta(\mathbf z)\,d\mathbf z,
\end{align*}
where $\partial_{z_i}\zeta(\mathbf z)$ denotes the partial derivative of $\zeta$ in the $\mathbf e_i$-direction at the point $\mathbf z$, which is given by $-z_i\,\zeta(\mathbf z)$. This proves that $\hat\kappa_\infty^{(t_0,\boldsymbol x_0)}$ admits first-order vertical derivatives at every $(t,\boldsymbol{\hat x})\in\boldsymbol{\hat\Lambda}$. In a similar way we can prove that $\hat\kappa_\infty^{(t_0,\boldsymbol x_0)}$ also admits second-order vertical derivatives at every $(t,\boldsymbol{\hat x})\in\boldsymbol{\hat\Lambda}$.

\vspace{1mm}

\noindent\textsc{Step II}. \emph{Proof of item }2). We begin noticing that
\begin{align*}
&|\hat\kappa_\infty^{(t_0,\boldsymbol x_0)}(t,\boldsymbol{\hat x} + h\,\mathbf e_i\,1_{[t,T]}) - \hat\kappa_\infty^{(t_0,\boldsymbol x_0)}(t,\boldsymbol{\hat x})| \\
&= \ \bigg|\int_{\R^d} \big\|\boldsymbol{\hat x}(\cdot\wedge t) - \boldsymbol x_0(\cdot\wedge t_0) - (\mathbf z - h\,\mathbf e_i)\,1_{[t,T]}\big\|_\infty\,\zeta(\mathbf z)\,d\mathbf z \\
&\quad \ - \int_{\R^d} \big\|\boldsymbol{\hat x}(\cdot\wedge t) - \boldsymbol x_0(\cdot\wedge t_0) - \mathbf z\,1_{[t,T]}\big\|_\infty\,\zeta(\mathbf z)\,d\mathbf z \bigg| \ \leq \ |h\,\mathbf e_i| \ = \ |h|,
\end{align*}
where we have used the fact that $\int_{\R^d}\zeta(\mathbf z)\,d\mathbf z=1$. It is then easy to see that, for every $i=1,\ldots,d$, $\partial_{x_i}^V\hat\kappa_\infty^{(t_0,\boldsymbol x_0)}$ is bounded by the constant $1$. Proceeding along the same lines as for $\hat\kappa_\infty^{(t_0,\boldsymbol x_0)}$, we deduce that $|\partial^V_{x_i}\hat\kappa_\infty^{(t_0,\boldsymbol x_0)}(t,\boldsymbol{\hat x} + h\,\mathbf e_j\,1_{[t,T]}) - \partial^V_{x_i}\hat\kappa_\infty^{(t_0,\boldsymbol x_0)}(t,\boldsymbol{\hat x})|$ is bounded by $|h|\int_{\R^d}|\partial_{z_i}\zeta(\mathbf z)|\,d\mathbf z=|h|\int_{\R^d}|z_i|\,\zeta(\mathbf z)\,d\mathbf z=|h|\sqrt{\frac{2}{\pi}}$. This allows to prove that, for every $i,j=1,\ldots,d$, $\partial^V_{x_i x_j}\hat\kappa_\infty^{(t_0,\boldsymbol x_0)}$ is bounded by $\sqrt{\frac{2}{\pi}}$.

\vspace{1mm}

\noindent\textsc{Step III}. \emph{Proof of item} 3). We begin noting that
\[
\hat\kappa_\infty^{(t_0,\boldsymbol x_0)}(t,\boldsymbol{\hat x}) \ = \ \int_{\R^d}\big\|\boldsymbol{\hat x}(\cdot\wedge t) - \boldsymbol x_0(\cdot\wedge t_0) - \mathbf z\,1_{[t,T]}\big\|_\infty\,\zeta(\mathbf z)\,d\mathbf z - \int_{\R^d}|\mathbf z|\,\zeta(\mathbf z)\,d\mathbf z \ \geq \ - C_\zeta,
\]
with $C_\zeta:=\int_{\R^d}|\mathbf z|\,\zeta(\mathbf z)\,d\mathbf z$, which proves the first part of item 3).

Now, let $(t,\boldsymbol x)\in\boldsymbol\Lambda$. Using the fact that $\zeta$ is a radial function, we have (when $d=1$, $(-\infty,0]\times\R^0$ and $[0,+\infty)\times\R^0$ stand for $(-\infty,0]$ and $[0,+\infty)$, respectively)
\begin{align}\label{Inequality_z_-z}
&\int_{\R^d}\big\|\boldsymbol x(\cdot\wedge t) - \boldsymbol x_0(\cdot\wedge t_0) - \mathbf z1_{[t,T]}\big\|_\infty\,\zeta(\mathbf z)\,d\mathbf z \notag \\
&= \ \int_{[0,+\infty)\times\R^{d-1}}\big\|\boldsymbol x(\cdot\wedge t) - \boldsymbol x_0(\cdot\wedge t_0) - \mathbf z1_{[t,T]}\big\|_\infty\,\zeta(\mathbf z)\,d\mathbf z \notag \\
&\quad \ + \int_{(-\infty,0]\times\R^{d-1}}\big\|\boldsymbol x(\cdot\wedge t) - \boldsymbol x_0(\cdot\wedge t_0) - \mathbf z1_{[t,T]}\big\|_\infty\,\zeta(\mathbf z)\,d\mathbf z	 \notag \\
&= \ \int_{[0,+\infty)\times\R^{d-1}}\big\|\boldsymbol x(\cdot\wedge t) - \boldsymbol x_0(\cdot\wedge t_0) - \mathbf z1_{[t,T]}\big\|_\infty\,\zeta(\mathbf z)\,d\mathbf z \notag \\
&\quad \ + \int_{[0,+\infty)\times\R^{d-1}}\big\|\boldsymbol x(\cdot\wedge t) - \boldsymbol x_0(\cdot\wedge t_0) + \mathbf z1_{[t,T]}\big\|_\infty\,\zeta(\mathbf z)\,d\mathbf z.
\end{align}
Now, we observe that, for every $\mathbf z\in\R^d$, we have
\begin{align}\label{norm_infty}
&\big\|\boldsymbol x(\cdot\wedge t) - \boldsymbol x_0(\cdot\wedge t_0) - \mathbf z1_{[t,T]}\big\|_\infty \notag \\
&= \ \max\Big\{\big\|\boldsymbol x(\cdot\wedge t) - \boldsymbol x_0(\cdot\wedge t\wedge t_0)\big\|_\infty,\max_{t\leq s\leq T}\big|\boldsymbol x(t) - \boldsymbol x_0(s\wedge t_0) - \mathbf z\big|\Big\}
\end{align}
and similarly for $\|\boldsymbol x(\cdot\wedge t) - \boldsymbol x_0(\cdot\wedge t_0) + \mathbf z1_{[t,T]}\|_\infty$. Moreover, by the elementary inequality $|\mathbf x - \mathbf z| + |\mathbf x + \mathbf z|\geq2|\mathbf x|$, valid for every $\mathbf x,\mathbf z\in\R^d$, we have
\begin{align*}
&\max_{t\leq s\leq T}\big|\boldsymbol x(t) - \boldsymbol x_0(s\wedge t_0) - \mathbf z\big| + \max_{t\leq s\leq T}\big|\boldsymbol x(t) - \boldsymbol x_0(s\wedge t_0) + \mathbf z\big| \\
&\geq \ \max_{t\leq s\leq T}\Big\{\big|\boldsymbol x(t) - \boldsymbol x_0(s\wedge t_0) - \mathbf z\big| + \big|\boldsymbol x(t) - \boldsymbol x_0(s\wedge t_0) + \mathbf z\big|\Big\} \\
&\geq \ 2\max_{t\leq s\leq T}\big|\boldsymbol x(t) - \boldsymbol x_0(s\wedge t_0)\big|.
\end{align*}
Then, using the elementary fact that if $a+b\geq2c$ it holds that $\max\{\ell,a\}+\max\{\ell,b\}\geq2\max\{\ell,c\}$, for every $a,b,c,\ell\in\R$, we find
\begin{align*}
&\max\Big\{\big\|\boldsymbol x(\cdot\wedge t) - \boldsymbol x_0(\cdot\wedge t\wedge t_0)\big\|_\infty,\max_{t
\leq s\leq T}\big|\boldsymbol x(t) - \boldsymbol x_0(s\wedge t_0) - \mathbf z\big|\Big\} \\
&+ \max\Big\{\big\|\boldsymbol x(\cdot\wedge t) - \boldsymbol x_0(\cdot\wedge t\wedge t_0)\big\|_\infty,\max_{t\leq s\leq T}\big|\boldsymbol x(t) - \boldsymbol x_0(s\wedge t_0) + \mathbf z\big|\Big\} \\
&\geq \ 2\max\Big\{\big\|\boldsymbol x(\cdot\wedge t) - \boldsymbol x_0(\cdot\wedge t\wedge t_0)\big\|_\infty,\max_{t\leq s\leq T}\big|\boldsymbol x(t) - \boldsymbol x_0(s\wedge t_0)\big|\Big\}.
\end{align*}
By \eqref{norm_infty} it follows that the last quantity coincides with $2\,\|\boldsymbol x(\cdot\wedge t) - \boldsymbol x_0(\cdot\wedge t_0)\|_\infty$. Therefore, by \eqref{Inequality_z_-z} we obtain (also recalling that $C_\zeta:=\int_{\R^d}|\mathbf z|\,\zeta(\mathbf z)\,d\mathbf z$)
\begin{align*}
\kappa_\infty^{(t_0,\boldsymbol x_0)}(t,\boldsymbol x) + C_\zeta \ &= \ \int_{\R^d}\big\|\boldsymbol x(\cdot\wedge t) - \boldsymbol x_0(\cdot\wedge t_0) - \mathbf z1_{[t,T]}\big\|_\infty\,\zeta(\mathbf z)\,d\mathbf z \\
&\geq \ 2\int_{[0,+\infty)\times\R^{d-1}}\big\|\boldsymbol x(\cdot\wedge t) - \boldsymbol x_0(\cdot\wedge t_0)\big\|_\infty\,\zeta(\mathbf z)\,d\mathbf z \\
&= \ \big\|\boldsymbol x(\cdot\wedge t) - \boldsymbol x_0(\cdot\wedge t_0)\big\|_\infty,
\end{align*}
which concludes the proof of item 3). Finally, the explicit expression of the constant $C_\zeta$, reported in \eqref{C_zeta}, follows from the fact that $C_\zeta=\mu_{\chi^2(d),\frac{1}{2}}$, where $\mu_{\chi^2(d),\frac{1}{2}}$ denotes the moment of order $1/2$ of a $\chi^2$-distribution with $d$ degrees of freedom.

\vspace{1mm} 

\noindent\textsc{Step IV}. \emph{Proof of the second inequality in \eqref{Ineq12}.} The second inequality in \eqref{Ineq12} follows easily from an application of the triangular inequality, namely noting that $\|\boldsymbol x(\cdot\wedge t) - \boldsymbol x_0(\cdot\wedge t_0) - \mathbf z1_{[t,T]}\|_\infty\leq\|\boldsymbol x(\cdot\wedge t) - \boldsymbol x_0(\cdot\wedge t_0)\|_\infty+|\mathbf z|$.

\vspace{1mm}

\noindent\textsc{Step V}. \emph{Proof of the first inequality in \eqref{Ineq12} for the case $\|\boldsymbol x(\cdot\wedge t) - \boldsymbol x_0(\cdot\wedge t_0)\|_\infty>2C_\zeta$.}\\
When $\|\boldsymbol x(\cdot\wedge t) - \boldsymbol x_0(\cdot\wedge t_0)\|_\infty>2C_\zeta$, we have, by item 3),
\begin{align*}
\kappa_\infty^{(t_0,\boldsymbol x_0)}(t,\boldsymbol x) \ &\geq \ \|\boldsymbol x(\cdot\wedge t) - \boldsymbol x_0(\cdot\wedge t_0)\|_\infty - C_\zeta \\
&= \ \|\boldsymbol x(\cdot\wedge t) - \boldsymbol x_0(\cdot\wedge t_0)\|_\infty\bigg(1 - \frac{C_\zeta}{\|\boldsymbol x(\cdot\wedge t) - \boldsymbol x_0(\cdot\wedge t_0)\|_\infty}\bigg) \\
&\geq \ \frac{1}{2}\,\|\boldsymbol x(\cdot\wedge t) - \boldsymbol x_0(\cdot\wedge t_0)\|_\infty \\
&\geq \ \frac{1}{2}\,\big(\|\boldsymbol x(\cdot\wedge t) - \boldsymbol x_0(\cdot\wedge t_0)\|_\infty^{d+1}\wedge\|\boldsymbol x(\cdot\wedge t) - \boldsymbol x_0(\cdot\wedge t_0)\|_\infty\big),
\end{align*}
which proves the first inequality in \eqref{Ineq12} with $\alpha_d:=\frac{1}{2}$, for the case $\|\boldsymbol x(\cdot\wedge t) - \boldsymbol x_0(\cdot\wedge t_0)\|_\infty>2C_\zeta$.

\vspace{1mm}

\noindent\textsc{Step VI}. \emph{Proof of the first inequality in \eqref{Ineq12} for the case $\|\boldsymbol x(\cdot\wedge t) - \boldsymbol x_0(\cdot\wedge t_0)\|_\infty\leq2C_\zeta$.}

\vspace{1mm}

\noindent\textsc{Step VI-1}. Our aim is to prove that for every fixed $d$ there exists some constant $\alpha_d>0$ such that
\begin{align}\label{FirstIneq}
&\int_{\R^d}\max\big\{a,|\mathbf y - \mathbf z|\big\}\,\zeta(\mathbf z)\,d\mathbf z - \int_{\R^d}|\mathbf z|\,\zeta(\mathbf z)\,d\mathbf z \\
&\hspace{2.5cm}\geq \ \alpha_d\min\{a^{d+1},a\} + \alpha_d\min\{|\mathbf y|^{d+1},|\mathbf y|\}, \qquad \forall\,(a,\mathbf y)\in[0,2C_\zeta]\times\R^d. \notag
\end{align}
As a matter of fact, suppose for a moment that \eqref{FirstIneq} holds true. Then, applying \eqref{FirstIneq} with $a:=\|\boldsymbol x(\cdot\wedge t) - \boldsymbol x_0(\cdot\wedge t\wedge t_0)\|_\infty$ and $\mathbf y_s:=\boldsymbol x(t) - \boldsymbol x_0(s\wedge t_0)$, for every $s\in[t,T]$, and taking the maximum over $s\in[t,T]$, we find (using \eqref{norm_infty})
\begin{align*}
\kappa_\infty^{(t_0,\boldsymbol x_0)}(t,\boldsymbol x) \ &= \ \int_{\R^d}\max\Big\{a,\max_{t\leq s\leq T}|\mathbf y_s - \mathbf z|\Big\}\,\zeta(\mathbf z)\,d\mathbf z - \int_{\R^d}|\mathbf z|\,\zeta(\mathbf z)\,d\mathbf z \notag \\
&= \ \int_{\R^d}\max_{t\leq s\leq T}\big\{\max\{a,|\mathbf y_s - \mathbf z|\}\big\}\,\zeta(\mathbf z)\,d\mathbf z - \int_{\R^d}|\mathbf z|\,\zeta(\mathbf z)\,d\mathbf z \notag \\
&\geq \ \max_{t\leq s\leq T}\bigg\{\int_{\R^d} \max\big\{a,|\mathbf y_s - \mathbf z|\big\}\,\zeta(\mathbf z)\,d\mathbf z\bigg\} - \int_{\R^d}|\mathbf z|\,\zeta(\mathbf z)\,d\mathbf z \notag \\
&= \ \max_{t\leq s\leq T}\bigg\{\int_{\R^d} \max\big\{a,|\mathbf y_s - \mathbf z|\big\}\,\zeta(\mathbf z)\,d\mathbf z - \int_{\R^d}|\mathbf z|\,\zeta(\mathbf z)\,d\mathbf z\bigg\} \notag \\
&\geq \ \max_{t\leq s\leq T}\Big\{\alpha_d\min\{a^{d+1},a\} + \alpha_d\min\{|\mathbf y_s|^{d+1},|\mathbf y_s|\}\Big\} \notag \\
&= \ \alpha_d\min\{a^{d+1},a\} + \alpha_d\max_{t\leq s\leq T}\Big\{\min\{|\mathbf y_s|^{d+1},|\mathbf y_s|\}\Big\} \notag \\
&= \ \alpha_d\min\{a^{d+1},a\} + \alpha_d\min\Big\{\max_{t\leq s\leq T}|\mathbf y_s|^{d+1},\max_{t\leq s\leq T}|\mathbf y_s|\Big\}.
\end{align*}
Hence, by the elementary inequality
\[
\min\{a^{d+1},a\} + \min\{b^{d+1},b\} \ \geq \ \min\big\{\max\{a^{d+1},b^{d+1}\},\max\{a,b\}\big\}, \qquad \forall\,a,b\geq0,
\]
we conclude that
\begin{align*}
\kappa_\infty^{(t_0,\boldsymbol x_0)}(t,\boldsymbol x) \ &\geq \ \alpha_d\,\min\Big\{\max\Big\{a^{d+1},\max_{t\leq s\leq T}|\mathbf y_s|^{d+1}\Big\},\max\Big\{a,\max_{t\leq s\leq T}|\mathbf y_s|\Big\}\Big\} \\
&= \ \alpha_d\,\min\big\{\|\boldsymbol x(\cdot\wedge t) - \boldsymbol x_0(\cdot\wedge t_0)\|_\infty^{d+1},\|\boldsymbol x(\cdot\wedge t) - \boldsymbol x_0(\cdot\wedge t_0)\|_\infty\big\},
\end{align*}
where the last equality follows from \eqref{norm_infty} with $\mathbf z=0$. This yields the first inequality in \eqref{Ineq12}. It remains to prove \eqref{FirstIneq}.

\vspace{1mm} 

\noindent\textsc{Step VI-2.} \emph{Proof of \eqref{FirstIneq}.} For every
positive integer $d$ and $a\geq0$, let $G_a\colon\R^d\rightarrow\R$ be given by
\begin{equation}\label{G_a_definition}
G_a(\mathbf y) \ := \ \int_{\R^d}\max\big\{a,|\mathbf y - \mathbf z|\big\}\,\zeta(\mathbf z)\,d\mathbf z - \int_{\R^d}|\mathbf z|\,\zeta(\mathbf z)\,d\mathbf z, \qquad \forall\,\mathbf y\in\R^d.
\end{equation}
Moreover, let $F_d\colon[0,+\infty)\rightarrow\R$ be defined as (differently to the notation used for $G_a$, we emphasize the dependence of $F_d$ on the dimension $d$; we do this because of statement \eqref{F_condition} below which changes with $d$)
\begin{equation}\label{F_definition}
F_d(a) \ := \ G_a(\mathbf 0) \ = \ \int_{\R^d}\max\big\{a,|\mathbf z|\big\}\,\zeta(\mathbf z)\,d\mathbf z - \int_{\R^d}|\mathbf z|\,\zeta(\mathbf z)\,d\mathbf z, \qquad \forall\,a\in[0,+\infty).
\end{equation}
Notice that $G_a$ and $F_d$ are convex functions on their domains.

Let us fix some notations. We denote by $\partial_{\mathbf y}G_a(\mathbf y)$ and $\partial_{\mathbf y\mathbf y} G_a(\mathbf y)$ (resp. $F_d'(a)$, $F_d''(a)$, $\dots$, $F_d^{(n)}(a)$) the gradient and Hessian (resp. first-order derivative, second-order derivative, $\dots$, $n$-th order derivative) of $G_a$ at $\mathbf y$ (resp. $F_d$ at $a$). When $a=0$, $F_d'(a)$, $F_d''(a)$, $\dots$, $F_d^{(n)}(a)$ are right-derivatives. We also denote by $I$ the $d\times d$ identity matrix. Finally, given $A$ and $B$ in $\mathcal S(d)$ (the set of symmetric $d\times d$ matrices), the inequality $B\leq A$ means that the symmetric matrix $A-B$ is positive semi-definite.

Our aim is to prove the following: \emph{for every $d$, there exist constants $\beta_d>0$ and $L_d>0$ such that}
\begin{align}\label{G_a_condition}
\text{\emph{$\forall\,a\in[0,2C_\zeta]$, $G_a\in C^2(\R^d)$, $\partial_{\mathbf y}G_a(\mathbf 0)=0$, $\partial_{\mathbf y\mathbf y} G_a(\mathbf 0)\geq\beta_d I$ and}} \\
\partial_{\mathbf y\mathbf y} G_a(\mathbf y) - \partial_{\mathbf y\mathbf y} G_a(\mathbf 0) \ \geq \ - L_d\,|\mathbf y|\,I, \qquad \forall\,\mathbf y\in\R^d \notag
\end{align}
\emph{and}
\begin{align}\label{F_condition}
\text{\emph{$F_d\in C^{d+1}([0,+\infty))$, $F_d'(0)=\cdots=F_d^{(d)}(0)=0$, $F_d^{(d+1)}(0)\geq\beta_d$ and}} \\
F_d^{(d+1)}(a) - F_d^{(d+1)}(0) \ \geq \ -L_d\,a, \qquad \forall\,a\geq0. \notag
\end{align}
Suppose for a moment that \eqref{G_a_condition} and \eqref{F_condition} hold. Then, by \eqref{G_a_condition} we show below
 that there exist some constants $\delta_d,\tilde\delta_d\in(0,1]$ such that
\begin{align}\label{Ineq_G_a}
G_a(\mathbf y) \ &\geq \ G_a(\mathbf 0) + \frac{1}{4}\,\beta_d\,|\mathbf y|^2, \qquad\hspace{1.3cm} \forall\,|\mathbf y|\leq\delta_d,\;\forall\,a\in[0,2C_\zeta], \\
F_d(a) \ &\geq \ F_d(0) + \frac{1}{2(d+1)!}\,\beta_d\,a^{d+1}, \qquad \forall\,a\in[0,\tilde\delta_d]. \label{Ineq_F_d}
\end{align}
As a matter of fact, for every fixed $\mathbf y\in\R^d$, set $\varphi_a(\lambda):=G_a(\lambda\mathbf y)$, for every $\lambda\in\R$. Since $\varphi_a\in C^2(\R)$, the Taylor expression given by
\[
\varphi_a(1) \ = \ \varphi_a(0) + \varphi_a'(0)
 + \frac{1}{2}\varphi_a''(0) + \int_0^1 (1 - \lambda)\,(\varphi_a''(\lambda) - \varphi_a''(0))\,d\lambda,
\]
which written in terms of $G_a$ becomes (denoting by $\langle\cdot,\cdot\rangle$ the scalar product in $\R^d$)
\begin{align*}
G_a(\mathbf y) \ &= \ G_a(\mathbf 0) + \langle \partial_{\mathbf y}G_a(\mathbf 0),\mathbf y\rangle + \frac{1}{2}\langle \partial_{\mathbf y\mathbf y} G_a(\mathbf 0)\mathbf y,\mathbf y\rangle \\
&\quad \ + \int_0^1 (1 - \lambda)\,\langle (\partial_{\mathbf y\mathbf y} G_a(\lambda\mathbf y) - \partial_{\mathbf y\mathbf y} G_a(\mathbf 0))\mathbf y,\mathbf y\rangle\,d\lambda \\
&\geq \ G_a(\mathbf 0) + \langle\partial_{\mathbf y}G_a(\mathbf 0),\mathbf y\rangle + \frac{1}{2}\beta_d|\mathbf y|^2 - L_d|\mathbf y|^3\int_0^1\lambda\,(1-\lambda)\,d\lambda \\
&= \ G_a(\mathbf 0) + \frac{1}{2}\beta_d|\mathbf y|^2 - \frac{1}{6}L_d|\mathbf y|^3.
\end{align*}
Hence
\[
G_a(\mathbf y) \ \geq \ G_a(\mathbf 0) + \frac{1}{4}\,\beta_d\,|\mathbf y|^2, \qquad \forall\,|\mathbf y|\leq\delta_d,\text{ where }\delta_d:=1\wedge\bigg(\frac{3}{2}\frac{\beta_d}{L_d}\bigg).
\]
This proves \eqref{Ineq_G_a}. Similarly, we consider the following
Taylor expression for $F_d$:
\[
F_d(a) \ = \ \sum_{k=0}^{d+1} \frac{F_d^{(k)}(0)}{k!}\,a^k + \frac{1}{d!}\int_0^a \big(F_d^{(d+1)}(b) - F_d^{(d+1)}(0)\big)\,(a - b)^d\,db. 
\]
By \eqref{F_condition} we obtain
\begin{align*}
F_d(a) \ &= \ F_d(0) + \frac{F_d^{(d+1)}(0)}{(d+1)!}\,a^{d+1} + \frac{1}{d!}\int_0^a \big(F_d^{(d+1)}(b) - F_d^{(d+1)}(0)\big)\,(a - b)^d\,db \\
&\geq \ F_d(0) + \frac{\beta_d}{(d+1)!}\,a^{d+1} - \frac{L_d}{d!}\int_0^a b\,(a - b)^d\,db \\
&= \ F_d(0) + \frac{\beta_d}{(d+1)!}\,a^{d+1} - \frac{L_d}{(d+2)!}\,a^{d+2}.
\end{align*}
Hence
\[
F_d(a) \ \geq \ F_d(0) + \frac{1}{2}\frac{\beta_d}{(d+1)!}\,a^{d+1}, \qquad \forall\,a\in[0,\tilde\delta_d],\text{ where }\tilde\delta_d:=1\wedge\bigg(\frac{d+2}{2}\frac{\beta_d}{L_d}\bigg).
\]
This proves \eqref{Ineq_F_d}. Now, we notice that from \eqref{Ineq_G_a} we have
\[
G_a(\mathbf y) \ \geq \ G_a(\mathbf 0) + \frac{1}{4}\,\beta_d\,|\mathbf y|^{d+1}, \qquad \forall\,|\mathbf y|\leq\delta_d,\;\forall\,a\in[0,2C_\zeta].
\]
Moreover, since $G_a$ is a convex function, it follows that
\begin{align}\label{G_a>=G_a}
G_a(\mathbf y) \ &\geq \ \min\Big(G_a(\mathbf 0) + \frac{1}{4}\,\beta_d\,|\mathbf y|^{d+1},G_a(\mathbf 0) + \frac{1}{4}\,\beta_d\,\delta_d^d\,|\mathbf y|\Big) \notag \\
&\geq \ \min\Big(G_a(\mathbf 0) + \frac{1}{4}\,\beta_d\,\delta_d^d\,|\mathbf y|^{d+1},G_a(\mathbf 0) + \frac{1}{4}\,\beta_d\,\delta_d^d\,|\mathbf y|\Big) \notag \\
&= \ G_a(\mathbf 0) + \frac{1}{4}\,\beta_d\,\delta_d^d\,\big(|\mathbf y|^{d+1}\wedge|\mathbf y|\big), \qquad\qquad \forall\,\mathbf y\in\R^d.
\end{align}
Proceeding along the same lines, we deduce by \eqref{Ineq_F_d} that
\begin{equation}\label{F>=F}
F_d(a) \ \geq \ F_d(0) + \frac{1}{2}\,\frac{\beta_d}{(d+1)!}\,\tilde\delta_d^d\,\big(a^{d+1}\wedge a\big), \qquad \forall\,a\geq0.
\end{equation}
So, in particular, since $G_a(\mathbf 0)=F_d(a)$ and $F_d(0)=0$, we obtain, from \eqref{G_a>=G_a} and \eqref{F>=F},
\[
G_a(\mathbf y) \geq \frac{1}{2}\,\frac{\beta_d}{(d+1)!}\,\tilde\delta_d^d\,(a^{d+1}\wedge a) + \frac{1}{2}\,\frac{\beta_d}{(d+1)!}\,\delta_d^d\,\big(|\mathbf y|^{d+1}\wedge|\mathbf y|\big), \quad \forall\,(a,\mathbf y)\in[0,2C_\zeta]\times\R^d,
\]
which proves \eqref{FirstIneq} with $\alpha_d:=\frac{1}{2}\frac{\beta_d}{(d+1)!}(\tilde\delta_d^d\wedge\delta_d^d)$ for the case $a\leq2C_\zeta$. It remains to prove \eqref{G_a_condition} and \eqref{F_condition}.

\vspace{1mm}

\noindent\textsc{Step VI-3.} \emph{Proof of \eqref{F_condition}.} From the definition \eqref{F_definition} of $F_d$ we see that $F_d$ is continuous. Moreover, by direct calculation we find
\begin{align*}
\lim_{h\rightarrow0^+}\frac{F_d(a+h) - F_d(a)}{h} \ &= \ \int_{|\mathbf z|\leq a}\zeta(\mathbf z)\,d\mathbf z, \qquad \forall\,a\geq0, \\
\lim_{h\rightarrow0^+}\frac{F_d(a-h) - F_d(a)}{-h} \ &= \ \int_{|\mathbf z|\leq a}\zeta(\mathbf z)\,d\mathbf z, \qquad \forall\,a>0.
\end{align*}
Hence, the first derivative of $F_d$ exists everywhere and is given by $F_d'(a)=\int_{|\mathbf z|\leq a}\zeta(\mathbf z)\,d\mathbf z$, $\forall\,a\geq0$. Notice that $F_d'(0)=0$. We also see that $F_d'$ is continuous on $[0,+\infty)$. Now, for every $r>0$ let $S_{d-1}(r)$ denote the surface area of the boundary of the ball $\{\mathbf z\in\R^d\colon|\mathbf z|\leq r\}$, which is given by $S_{d-1}(r)=\frac{2\pi^{d/2}}{\Gamma(d/2)}r^{d-1}$, where $\Gamma(\cdot)$ is the Gamma function. Then, recalling that $\zeta$ is a radial function and using $d$-dimensional spherical coordinates (see for instance Appendix C.3 in \cite{Evans}), we get
\[
F_d'(a) \ = \ \int_0^a \frac{1}{(2\pi)^\frac{d}{2}}\,\textup{e}^{-\frac{1}{2}r^2}\,S_{d-1}(r)\,dr, \qquad \forall\,a\geq0.
\]
So, in particular, the second derivative of $F_d$ exists everywhere and is given by
\[
F_d''(a) \ = \ \frac{1}{(2\pi)^{\frac{d}{2}}}\,\textup{e}^{-\frac{1}{2}a^2}\,S_{d-1}(a) \ = \ \frac{2^{1-\frac{d}{2}}}{\Gamma\left(\frac{d}{2}\right)}\,a^{d-1}\,\textup{e}^{-\frac{1}{2}a^2}, \qquad \forall\,a\geq0.
\]
We deduce that $F_d\in C^\infty([0,+\infty))$. We also observe that every derivative of $F_d$ is bounded, so in particular $F_d^{(d+1)}$ is Lipschitz. As a consequence, there exists $L_d>0$ such that
\[
F_d^{(d+1)}(a) - F_d^{(d+1)}(0) \ \geq \ -L_d\,a, \qquad \forall\,a\geq0.
\]
Finally, let us prove by induction on $d$ that $F_d'(0)=\cdots=F_d^{(d)}(0)=0$ and $F_d^{(d+1)}(0)>0$. For $d=1$ we have, by direct calculation, $F_1(0)=F_1'(0)=0$ and $F_1''(0)=1/\sqrt{2\pi}>0$. Let us now suppose that the claim holds true for $F_d$, for some $d\geq1$, and let us prove it for $F_{d+1}$. By the explicit expressions of $F_{d+1}$ and $F_{d+1}'$ we see that $F_{d+1}(0)=F_{d+1}'(0)=0$. Moreover
\[
F_{d+1}''(a) \ = \ C_{d+1}\,a^d\,\textup{e}^{-\frac{1}{2}a^2},
\]
where $C_{d+1}:=2^{1-(d+1)/2}/\Gamma((d+1)/2)>0$. So, in particular, $F_{d+1}''(0)=0$. Now, we observe that
\[
F_{d+1}'''(a) \ = \ d\,C_{d+1}\,a^{d-1}\,\textup{e}^{-\frac{1}{2}a^2} - C_{d+1}\,a^{d+1}\,\textup{e}^{-\frac{1}{2}a^2} \ = \ \frac{C_{d+1}}{C_d}\,(d-a^2)\,F_d''(a),
\]
where $C_d:=2^{1-d/2}/\Gamma(d/2)>0$. Therefore
\[
F_{d+1}^{\text{\textup{iv}}}(a) \ = \ \frac{C_{d+1}}{C_d}\,\big((d-a^2)\,F_d'''(a) - 2\,a\,F_d''(a)\big).
\]
Moreover, by the general Leibniz rule, we have
\[
F_{d+1}^{(3+n)}(a) \ = \ \frac{C_{d+1}}{C_d} \sum_{k=0}^{n} \binom{n}{k}\,(d-a^2)^{(n-k)}\,F_d^{(2+k)}(a), \qquad \text{for every }n\geq2,
\]
where $(d-a^2)^{(n-k)}$ denotes the $(n-k)$-th derivative of the map $a\mapsto d-a^2$. Since $(d-a^2)^{(n-k)}$ is identically equal to zero whenever $n-k\geq3$, it follows that
\begin{align*}
F_{d+1}^{(3+n)}(a) \ &= \ \frac{C_{d+1}}{C_d} \bigg(\binom{n}{n-2}\,(d-a^2)^{(2)}\,F_d^{(2+n-2)}(a) \\
&\quad \ + \binom{n}{n-1}\,(d-a^2)^{(1)}\,F_d^{(2+n-1)}(a) + \binom{n}{n}\,(d-a^2)\,F_d^{(2+n)}(a)\bigg) \\
&= \ \frac{C_{d+1}}{C_d} \big(-n\,(n-1)\,F_d^{(2+n-2)}(a) - 2\,n\,a\,F_d^{(2+n-1)}(a) + (d-a^2)\,F_d^{(2+n)}(a)\big).
\end{align*}
In conclusion, we have
\begin{align*}
F_{d+1}'''(0) \ &= \ \frac{C_{d+1}}{C_d}\,d\,F_d''(0), \\
F_{d+1}^{\text{\textup{iv}}}(0) \ &= \ \frac{C_{d+1}}{C_d}\,d\,F_d'''(0), \\
F_{d+1}^{(3+n)}(0) \ &= \ \frac{C_{d+1}}{C_d} \big(-n\,(n-1)\,F_d^{(2+n-2)}(0) + d\,F_d^{(2+n)}(0)\big), \qquad \text{for every }n\geq2.
\end{align*}
From the formulae above it is straightforward to see that the claim holds. This concludes the proof of \eqref{F_condition}.

\vspace{1mm} 

\noindent\textsc{Step VI-4.} \emph{Proof of \eqref{G_a_condition}.} From the definition \eqref{G_a_definition} of $G_a$ we see that $G_a\in C^\infty(\R^d)$. Moreover, we have, for every $i,j=1,\ldots,d$,
\begin{align*}
\partial_{y_i}G_a(\mathbf y) &= -\int_{\R^d}\max\big\{a,|\mathbf y - \mathbf z|\big\}\,\partial_{z_i}\zeta(\mathbf z)\,d\mathbf z = \int_{\R^d}\max\big\{a,|\mathbf y - \mathbf z|\big\}\,z_i\,\zeta(\mathbf z)\,d\mathbf z, \\
\partial_{y_i y_j} G_a(\mathbf y) &= \int_{\R^d}\max\big\{a,|\mathbf y - \mathbf z|\big\}\,\partial_{z_i z_j}\zeta(\mathbf z)\,d\mathbf z = \int_{\R^d}\max\big\{a,|\mathbf y - \mathbf z|\big\}\,\big(z_i\,z_j - \delta_{ij}\big)\,\zeta(\mathbf z)\,d\mathbf z,
\end{align*}
where $\delta_{ij}$ is the Kronecker delta. Since $\zeta$ is a radial function, we have $\zeta(\mathbf z)=\zeta(-\mathbf z)$, for every $\mathbf z\in\R^d$, therefore $\partial_{y_i}G_a(\mathbf 0)=0$.

We now prove that for every fixed $d$ there exists $L_d>0$ such that, for every $a\geq0$, we have
\[
\partial_{\mathbf y\mathbf y} G_a(\mathbf y) - \partial_{\mathbf y\mathbf y} G_a(\mathbf 0) \ \geq \ - L_d|\mathbf y|I, \qquad \forall\,\mathbf y\in\R^d,
\]
which can be equivalently written as
\begin{equation}\label{G_a_Lip}
\sum_{i,j=1}^d \big(\partial_{y_i y_j} G_a(\mathbf y) - \partial_{y_i y_j} G_a(\mathbf 0)\big)w_i w_j \ \geq \ - L_d|\mathbf y||\mathbf w|^2, \qquad \forall\,\mathbf y,\mathbf w\in\R^d,
\end{equation}
where $y_i$ (resp. $w_i$) denotes the $i$-th component of $\mathbf y$ (resp. $\mathbf w$). We start noticing
 that, for every $i,j=1,\ldots,d$, we have (we use the elementary inequality $|\max\{a,b+c\}-\max\{a,c\}|\leq|b|$, valid for every $a,c\geq0$ and $b\in\R$, with $b=|\mathbf y-\mathbf z|-|\mathbf z|$ and $c=|\mathbf z|$)
\begin{align*}
\big|\partial_{y_i y_j} G_a(\mathbf y) - \partial_{y_i y_j} G_a(\mathbf 0)\big| &\leq \int_{\R^d}\big|\max\{a,|\mathbf y - \mathbf z|\} - \max\{a,|\mathbf z|\}\big|\,\big|\partial_{z_i z_j}\zeta(\mathbf z)\big|\,d\mathbf z \\
&= \int_{\R^d}\big|\max\{a,|\mathbf y - \mathbf z| - |\mathbf z| + |\mathbf z|\} - \max\{a,|\mathbf z|\}\big|\,\big|\partial_{z_i z_j}\zeta(\mathbf z)\big|\,d\mathbf z \\
&\leq \int_{\R^d}|\mathbf y|\,|\partial_{z_i z_j}\zeta(\mathbf z)|\,d\mathbf z \ = \ \frac{L_d}{d}\,|\mathbf y|,
\end{align*}
with $\frac{L_d}{d}=\int_{\R^d}|\partial_{z_i z_j}\zeta(\mathbf z)|\,d\mathbf z$. Then, for every $\mathbf w\in\R^d$, we obtain
\[
\bigg|\sum_{i,j=1}^d \big(\partial_{y_i y_j} G_a(\mathbf y) - \partial_{y_i y_j} G_a(\mathbf 0)\big)w_i w_j\bigg| \ \leq \ \frac{L_d}{d}\,|\mathbf y|\sum_{i,j=1}^d|w_i||w_j| \ \leq \ L_d\,|\mathbf y|\,|\mathbf w|^2,
\]
which proves \eqref{G_a_Lip}. 

Finally, we prove that for every fixed $d$ there exists $\hat\beta_d>0$ such that, for every $a\in[0,2C_\zeta]$,
\begin{equation}\label{G_a_D^2}
\partial_{\mathbf y\mathbf y} G_a(\mathbf 0) \ \geq \ \hat\beta_d I.
\end{equation}
As a matter of fact, for every $\mathbf w\in\R^d$, we have
\begin{align*}
&\langle \partial_{\mathbf y\mathbf y} G_a(\mathbf 0)\mathbf w,\mathbf w\rangle = \sum_{i,j=1}^d \partial_{y_i y_j} G_a(\mathbf 0)\,w_i\,w_j	 = \sum_{i,j=1}^d w_i\,w_j \int_{\R^d}\max\big\{a,|\mathbf z|\big\}\,\big(z_i\,z_j - \delta_{ij}\big)\,\zeta(\mathbf z)\,d\mathbf z	\\
&= \sum_{i=1}^d w_i^2 \int_{\R^d}\max\big\{a,|\mathbf z|\big\}\,\big(z_1^2 - 1\big)\,\zeta(\mathbf z)\,d\mathbf z - \sum_{i\neq j} w_i\,w_j \int_{\R^d}\max\big\{a,|\mathbf z|\big\}\,z_1\,z_2\,\zeta(\mathbf z)\,d\mathbf z.
\end{align*}
Now, notice that $\int_{\R^d}\max\{a,|\mathbf z|\}\,z_1\,z_2\,\zeta(\mathbf z)\,d\mathbf z=0$, for every $a\geq0$. Hence
\begin{align*}
\langle\partial_{\mathbf y\mathbf y} G_a(\mathbf 0)\mathbf w,\mathbf w\rangle \ &= \ |\mathbf w|^2 \int_{\R^d}\max\big\{a,|\mathbf z|\big\}\,\big(z_1^2 - 1\big)\,\zeta(\mathbf z)\,d\mathbf z \\
&= \ |\mathbf w|^2 \frac{1}{d}\sum_{i=1}^d\int_{\R^d}\max\big\{a,|\mathbf z|\big\}\,\big(z_i^2 - 1\big)\,\zeta(\mathbf z)\,d\mathbf z \\
&= \ |\mathbf w|^2 \frac{1}{d} \int_{\R^d}\max\big\{a,|\mathbf z|\big\}\,\big(|\mathbf z|^2 - d\big)\,\zeta(\mathbf z)\,d\mathbf z.
\end{align*}
Let $H\colon[0,+\infty)\rightarrow\R$ be defined as
\[
H(a) \ := \ \int_{\R^d}\max\big\{a,|\mathbf z|\big\}\,\big(|\mathbf z|^2 - d\big)\,\zeta(\mathbf z)\,d\mathbf z, \qquad \forall\,a\in[0,+\infty).
\]
Notice that \eqref{G_a_D^2} follows if we prove the following (actually, it would be enough to require that $H(a)>0$ for every $a\geq0$ and $H$ decreasing; \eqref{G_a_D^2_proof} is a sufficient condition for this):
\begin{equation}\label{G_a_D^2_proof}
\text{\emph{$H(0)>0$, $\lim_{a\rightarrow+\infty}H(a)=0$, $H$ is a strictly decreasing function}}.
\end{equation}
As a matter of fact, if \eqref{G_a_D^2_proof} holds, then
\[
\inf_{a\in[0,2C_\zeta]} H(a) \ = \ H(2C_\zeta) \ > \ 0,
\]
from which \eqref{G_a_D^2} follows with $\hat\beta_d=\frac{1}{d}H(2C_\zeta)$.

It remains to prove \eqref{G_a_D^2_proof}. Denoting by $\mu_{\chi^2(d),p}$ the moment of order $p>0$ of a $\chi^2$-distribution with $d$ degrees of freedom, and recalling that $\Gamma(\cdot)$ is the Gamma function, we have
\[
H(0) \ = \ \mu_{\chi^2(d),\frac{3}{2}} - d\,\mu_{\chi^2(d),\frac{1}{2}} \ = \ 2^{\frac{3}{2}}\,\frac{\Gamma\big(\frac{d}{2} + \frac{3}{2}\big)}{\Gamma\big(\frac{d}{2}\big)} - d\,2^{\frac{1}{2}}\,\frac{\Gamma\big(\frac{d}{2} + \frac{1}{2}\big)}{\Gamma\big(\frac{d}{2}\big)} \ = \ \mu_{\chi^2(d),\frac{1}{2}}  \ > \ 0.
\]
Concerning the function $H$, we also have that (we perform the change of variables $\mathbf z=a\mathbf w$ under the integral sign)
\[
\lim_{a\rightarrow+\infty} H(a) \ = \ \lim_{a\rightarrow+\infty} \int_{\R^d} a^2\,\max\big\{1,|\mathbf w|\big\}\,\big(a^2\,|\mathbf w|^2 - d\big)\,\zeta(a\mathbf w)\,d\mathbf w \ = \ 0,
\]
where the limit follows from an application of the Lebesgue dominated convergence theorem.

Now, proceeding as in Step VI-3 for the function $F$, we deduce that $H\in C^\infty([0,+\infty))$ and, for every $a\geq0$,
\[
H'(a) \ = \ \int_{|\mathbf z|\leq a}\big(|\mathbf z|^2 - d\big)\,\zeta(\mathbf z)\,d\mathbf z, \qquad H''(a) \ = \ \frac{a^2 - d}{(2\pi)^{\frac{d}{2}}}\,\textup{e}^{-\frac{1}{2}a^2}\,S_{d-1}(a),
\]
where $S_{d-1}(a)$ denotes the surface area of the boundary of the ball $\{\mathbf z\in\R^d\colon|\mathbf z|\leq a\}$. Notice that
\[
H'(0) \ = \ 0, \qquad\qquad \lim_{a\rightarrow+\infty} H'(a) \ = \ \int_{\R^d}\big(|\mathbf z|^2 - d\big)\,\zeta(\mathbf z)\,d\mathbf z \ = \ \mu_{\chi^2(d),1} - d \ = \ 0.
\]
Then, we deduce from the sign of $H''$ that $H'(a)<0$, for every $a>0$. This implies that $H$ is a strictly decreasing function and concludes the proof.
\end{proof}

\subsection{Proof of Lemma \ref{L:Smoothing_rho_infty}}
\label{AppVar2}

\begin{proof}[Proof of Lemma \ref{L:Smoothing_rho_infty}]
We split the proof into three steps.

\vspace{1mm}

\noindent\textsc{Step I}. \emph{Proof of item }1).
We first notice that $\hat\chi_\infty^{(t_0,\boldsymbol x_0)}$ is a non-anticipative map. Now, let $(t,\boldsymbol{\hat x})\in\boldsymbol{\hat\Lambda}$ and $\delta>0$, with $t<T$ and $\delta\leq T-t$, then we have
\begin{align*}
&\frac{\hat\chi_\infty^{(t_0,\boldsymbol x_0)}(t+\delta,\boldsymbol{\hat x}(\cdot\wedge t)) - \hat\chi_\infty^{(t_0,\boldsymbol x_0)}(t,\boldsymbol{\hat x})}{\delta} \\
&= \ \int_0^{+\infty} \frac{\hat\kappa_\infty^{(t_0,\boldsymbol x_0)}\big((t + s)\wedge T,\boldsymbol{\hat x}(\cdot\wedge t)\big)}{1 + C_\zeta + \hat\kappa_\infty^{(t_0,\boldsymbol x_0)}\big((t + s)\wedge T,\boldsymbol{\hat x}(\cdot\wedge t)\big)}\,\frac{\eta(s - \delta) - \eta(s)}{\delta}\,ds \\
&\quad \ - \frac{1}{\delta}\int_0^\delta \frac{\hat\kappa_\infty^{(t_0,\boldsymbol x_0)}\big((t + s)\wedge T,\boldsymbol{\hat x}(\cdot\wedge t)\big)}{1 + C_\zeta + \hat\kappa_\infty^{(t_0,\boldsymbol x_0)}\big((t + s)\wedge T,\boldsymbol{\hat x}(\cdot\wedge t)\big)}\,\eta(s - \delta)\,ds.
\end{align*}
Notice that (writing $\frac{\eta(s - \delta) - \eta(s)}{\delta}=-\int_{s-\delta}^s \eta'(\tilde s)\,d\tilde s$, using Fubini's theorem and the integrability of $\eta'$, namely $\int_0^{+\infty}|\eta'(s)|\,ds<+\infty$)
\begin{align*}
&\int_0^{+\infty} \frac{\hat\kappa_\infty^{(t_0,\boldsymbol x_0)}\big((t + s)\wedge T,\boldsymbol{\hat x}(\cdot\wedge t)\big)}{1 + C_\zeta + \hat\kappa_\infty^{(t_0,\boldsymbol x_0)}\big((t + s)\wedge T,\boldsymbol{\hat x}(\cdot\wedge t)\big)}\,\frac{\eta(s - \delta) - \eta(s)}{\delta}\,ds \\
&\hspace{3.5cm} \overset{\delta\rightarrow0^+}{\longrightarrow} \ - \int_0^{+\infty} \frac{\hat\kappa_\infty^{(t_0,\boldsymbol x_0)}\big((t + s)\wedge T,\boldsymbol{\hat x}(\cdot\wedge t)\big)}{1 + C_\zeta + \hat\kappa_\infty^{(t_0,\boldsymbol x_0)}\big((t + s)\wedge T,\boldsymbol{\hat x}(\cdot\wedge t)\big)}\,\eta'(s)\,ds.
\end{align*}
We also have, using that $\frac{\hat\kappa_\infty^{(t_0,\boldsymbol x_0)}}{1+C_\zeta+\hat\kappa_\infty^{(t_0,\boldsymbol x_0)}}\leq1$ and $\eta(s-\delta)=(s-\delta)\,\textup{e}^{-(s-\delta)}\geq-\delta\,\textup{e}^\delta=\eta(-\delta)$, for every $0\leq s\leq\delta$,
\[
\bigg|\frac{1}{\delta}\int_0^\delta \frac{\hat\kappa_\infty^{(t_0,\boldsymbol x_0)}\big((t + s)\wedge T,\boldsymbol{\hat x}(\cdot\wedge t)\big)}{1 + C_\zeta + \hat\kappa_\infty^{(t_0,\boldsymbol x_0)}\big((t + s)\wedge T,\boldsymbol{\hat x}(\cdot\wedge t)\big)}\,\eta(s - \delta)ds\,\bigg| \ \leq \ \delta\,\textup{e}^\delta.
\]
This implies that
\[
\frac{1}{\delta}\int_0^\delta \frac{\hat\kappa_\infty^{(t_0,\boldsymbol x_0)}\big((t + s)\wedge T,\boldsymbol{\hat x}(\cdot\wedge t)\big)}{1 + C_\zeta + \hat\kappa_\infty^{(t_0,\boldsymbol x_0)}\big((t + s)\wedge T,\boldsymbol{\hat x}(\cdot\wedge t)\big)}\eta(s - \delta)ds \ \overset{\delta\rightarrow0^+}{\longrightarrow} \ 0.
\]
This proves that the horizontal derivative of $\hat\chi_\infty^{(t_0,\boldsymbol x_0)}$ exists everywhere on $[0,T)\times D([0,T];\R^d)$ and is given by
\begin{equation}\label{HorDer_t<T}
\partial_t^H \hat\chi_\infty^{(t_0,\boldsymbol x_0)}(t,\boldsymbol{\hat x}) \ = \ - \int_0^{+\infty} \frac{\hat\kappa_\infty^{(t_0,\boldsymbol x_0)}\big((t + s)\wedge T,\boldsymbol{\hat x}(\cdot\wedge t)\big)}{1 + C_\zeta + \hat\kappa_\infty^{(t_0,\boldsymbol x_0)}\big((t + s)\wedge T,\boldsymbol{\hat x}(\cdot\wedge t)\big)}\,\eta'(s)\,ds.
\end{equation}
Moreover, at $t=T$ we have
\begin{align}\label{HorDer_t=T}
\partial_t^H \hat\chi_\infty^{(t_0,\boldsymbol x_0)}(T,\boldsymbol{\hat x}) \ &= \ \lim_{t\rightarrow T^-} \partial_t^H \hat\chi_\infty^{(t_0,\boldsymbol x_0)}(t,\boldsymbol{\hat x}) \notag \\
&= \ - \lim_{t\rightarrow T^-}  \int_0^{+\infty} \frac{\hat\kappa_\infty^{(t_0,\boldsymbol x_0)}\big((t + s)\wedge T,\boldsymbol{\hat x}(\cdot\wedge t)\big)}{1 + C_\zeta + \hat\kappa_\infty^{(t_0,\boldsymbol x_0)}\big((t + s)\wedge T,\boldsymbol{\hat x}(\cdot\wedge t)\big)}\,\eta'(s)\,ds \notag \\
&= \ - \int_0^{+\infty} \frac{\hat\kappa_\infty^{(t_0,\boldsymbol x_0)}\big(T,\boldsymbol{\hat x}(\cdot\wedge T-)\big)}{1 + C_\zeta + \hat\kappa_\infty^{(t_0,\boldsymbol x_0)}\big(T,\boldsymbol{\hat x}(\cdot\wedge T-)\big)}\,\eta'(s)\,ds,
\end{align}
where $\boldsymbol{\hat x}(\cdot\wedge T-)$ is given by \eqref{xt-} with $t=T$. In conclusion, the horizontal derivative of $\hat\chi_\infty^{(t_0,\boldsymbol x_0)}$ exists everywhere on $\boldsymbol{\hat\Lambda}$.

Let us now consider the vertical derivatives of $\hat\chi_\infty^{(t_0,\boldsymbol x_0)}$. Given $(t,\boldsymbol{\hat x})\in\boldsymbol{\hat\Lambda}$, $h\in\R\backslash\{0\}$, and $i=1,\ldots,d$, we have (recalling that $\mathbf e_1,\ldots,\mathbf e_d$ denotes the standard orthonormal basis of $\R^d$)
\begin{align*}
&\frac{\hat\chi_\infty^{(t_0,\boldsymbol x_0)}(t,\boldsymbol{\hat x} + h\,\mathbf e_i\,1_{[t,T]}) - \hat\chi_\infty^{(t_0,\boldsymbol x_0)}(t,\boldsymbol{\hat x})}{h} \\
&= \int_0^{+\infty}\frac{\frac{\hat\kappa_\infty^{(t_0,\boldsymbol x_0)}\big((t + s)\wedge T,\boldsymbol{\hat x}(\cdot\wedge t) + h\,\mathbf e_i\,1_{[t,T]}\big)}{1 + C_\zeta + \hat\kappa_\infty^{(t_0,\boldsymbol x_0)}\big((t + s)\wedge T,\boldsymbol{\hat x}(\cdot\wedge t) + h\,\mathbf e_i\,1_{[t,T]}\big)} - \frac{\hat\kappa_\infty^{(t_0,\boldsymbol x_0)}\big((t + s)\wedge T,\boldsymbol{\hat x}(\cdot\wedge t)\big)}{1 + C_\zeta + \hat\kappa_\infty^{(t_0,\boldsymbol x_0)}\big((t + s)\wedge T,\boldsymbol{\hat x}(\cdot\wedge t)\big)}}{h}\eta(s)ds.
\end{align*}
By item 1) of Lemma \ref{L:rho_infty}, we have that
\begin{align*}
\frac{\frac{\hat\kappa_\infty^{(t_0,\boldsymbol x_0)}\big((t + s)\wedge T,\boldsymbol{\hat x}(\cdot\wedge t) + h\,\mathbf e_i\,1_{[t,T]}\big)}{1 + C_\zeta + \hat\kappa_\infty^{(t_0,\boldsymbol x_0)}\big((t + s)\wedge T,\boldsymbol{\hat x}(\cdot\wedge t) + h\,\mathbf e_i\,1_{[t,T]}\big)} - \frac{\hat\kappa_\infty^{(t_0,\boldsymbol x_0)}\big((t + s)\wedge T,\boldsymbol{\hat x}(\cdot\wedge t)\big)}{1 + C_\zeta + \hat\kappa_\infty^{(t_0,\boldsymbol x_0)}\big((t + s)\wedge T,\boldsymbol{\hat x}(\cdot\wedge t)\big)}}{h}& \\
\overset{h\rightarrow0}{\longrightarrow} \ \frac{(1 + C_\zeta)\,\partial^V_{x_i}\hat\kappa_\infty^{(t_0,\boldsymbol x_0)}\big((t + s)\wedge T,\boldsymbol{\hat x}(\cdot\wedge t)\big)}{\big(1 + C_\zeta + \hat\kappa_\infty^{(t_0,\boldsymbol x_0)}\big((t + s)\wedge T,\boldsymbol{\hat x}(\cdot\wedge t)\big)\big)^2}&.
\end{align*}
Then, by the Lebesgue dominated convergence theorem, we obtain
\begin{align*}
\int_0^{+\infty} \frac{\frac{\hat\kappa_\infty^{(t_0,\boldsymbol x_0)}\big((t + s)\wedge T,\boldsymbol{\hat x}(\cdot\wedge t) + h\,\mathbf e_i\,1_{[t,T]}\big)}{1 + C_\zeta + \hat\kappa_\infty^{(t_0,\boldsymbol x_0)}\big((t + s)\wedge T,\boldsymbol{\hat x}(\cdot\wedge t) + h\,\mathbf e_i\,1_{[t,T]}\big)} - \frac{\hat\kappa_\infty^{(t_0,\boldsymbol x_0)}\big((t + s)\wedge T,\boldsymbol{\hat x}(\cdot\wedge t)\big)}{1 + C_\zeta + \hat\kappa_\infty^{(t_0,\boldsymbol x_0)}\big((t + s)\wedge T,\boldsymbol{\hat x}(\cdot\wedge t)\big)}}{h}\,\eta(s)\,ds& \\
\overset{h\rightarrow0}{\longrightarrow} \ \int_0^{+\infty} \frac{(1 + C_\zeta)\,\partial^V_{x_i}\hat\kappa_\infty^{(t_0,\boldsymbol x_0)}\big((t + s)\wedge T,\boldsymbol{\hat x}(\cdot\wedge t)\big)}{\big(1 + C_\zeta + \hat\kappa_\infty^{(t_0,\boldsymbol x_0)}\big((t + s)\wedge T,\boldsymbol{\hat x}(\cdot\wedge t)\big)\big)^2}\,\eta(s)\,ds&.
\end{align*}
This proves that $\hat\chi_\infty^{(t_0,\boldsymbol x_0)}$ admits first-order vertical derivatives at every $(t,\boldsymbol{\hat x})\in\boldsymbol{\hat\Lambda}$, which are given by
\begin{equation}\label{VerDer}
\partial^V_{x_i}\hat\chi_\infty^{(t_0,\boldsymbol x_0)}(t,\boldsymbol{\hat x}) \ = \ \int_0^{+\infty} \frac{(1 + C_\zeta)\,\partial^V_{x_i}\hat\kappa_\infty^{(t_0,\boldsymbol x_0)}\big((t + s)\wedge T,\boldsymbol{\hat x}(\cdot\wedge t)\big)}{\big(1 + C_\zeta + \hat\kappa_\infty^{(t_0,\boldsymbol x_0)}\big((t + s)\wedge T,\boldsymbol{\hat x}(\cdot\wedge t)\big)\big)^2}\,\eta(s)\,ds,
\end{equation}
for every $(t,\boldsymbol{\hat x})\in\boldsymbol{\hat\Lambda}$ and every $i=1,\ldots,d$. In a similar way we can prove that $\hat\chi_\infty^{(t_0,\boldsymbol x_0)}$ also admits second-order vertical derivatives at every $(t,\boldsymbol{\hat x})\in\boldsymbol{\hat\Lambda}$. In particular, it holds that
\begin{align}\label{VerDer2nd}
&\partial^V_{x_i x_j}\hat\chi_\infty^{(t_0,\boldsymbol x_0)}(t,\boldsymbol{\hat x}) = \int_0^{+\infty} \frac{(1 + C_\zeta)\partial^V_{x_i x_j}\hat\kappa_\infty^{(t_0,\boldsymbol x_0)}\big((t + s)\wedge T,\boldsymbol{\hat x}(\cdot\wedge t)\big)}{\big(1 + C_\zeta + \hat\kappa_\infty^{(t_0,\boldsymbol x_0)}\big((t + s)\wedge T,\boldsymbol{\hat x}(\cdot\wedge t)\big)\big)^2}\,\eta(s)\,ds \\
&\!- 2\!\int_0^{+\infty}\! \frac{(1 + C_\zeta)\partial^V_{x_i}\hat\kappa_\infty^{(t_0,\boldsymbol x_0)}\big((t + s)\wedge T,\boldsymbol{\hat x}(\cdot\wedge t)\big)\partial^V_{x_j}\hat\kappa_\infty^{(t_0,\boldsymbol x_0)}\big((t + s)\wedge T,\boldsymbol{\hat x}(\cdot\wedge t)\big)}{\big(1 + C_\zeta + \hat\kappa_\infty^{(t_0,\boldsymbol x_0)}\big((t + s)\wedge T,\boldsymbol{\hat x}(\cdot\wedge t)\big)\big)^3}\eta(s)ds, \notag
\end{align}
for every $(t,\boldsymbol{\hat x})\in\boldsymbol{\hat\Lambda}$ and every $i,j=1,\ldots,d$.

\vspace{1mm}

\noindent\textsc{Step II}. \emph{Proof of item }2). We prove the continuity of the map $\big((t_0,\boldsymbol x_0),(t,\boldsymbol{\hat x})\big)\mapsto\hat\chi_\infty^{(t_0,\boldsymbol x_0)}(t,\boldsymbol{\hat x})$ on $\boldsymbol\Lambda\times\boldsymbol{\hat\Lambda}$, which, in particular, implies the continuity of the map $(t,\boldsymbol{\hat x})\mapsto\hat\chi_\infty^{(t_0,\boldsymbol x_0)}(t,\boldsymbol{\hat x})$ on $\boldsymbol{\hat\Lambda}$. In a similar way, we can prove the continuity on $\boldsymbol{\hat\Lambda}$ of the pathwise derivatives $(t,\boldsymbol{\hat x})\mapsto\partial_t^H\hat\chi_\infty^{(t_0,\boldsymbol x_0)}(t,\boldsymbol{\hat x})$, $(t,\boldsymbol{\hat x})\mapsto\partial_{x_i}^V\hat\chi_\infty^{(t_0,\boldsymbol x_0)}(t,\boldsymbol{\hat x})$, $(t,\boldsymbol{\hat x})\mapsto\partial_{x_i x_j}^V\hat\chi_\infty^{(t_0,\boldsymbol x_0)}(t,\boldsymbol{\hat x})$, for every $i,j=1,\ldots,d$.

Let us prove that the map $\big((t_0,\boldsymbol x_0),(t,\boldsymbol{\hat x})\big)\mapsto\hat\chi_\infty^{(t_0,\boldsymbol x_0)}(t,\boldsymbol{\hat x})$ is continuous on $\boldsymbol\Lambda\times\boldsymbol{\hat\Lambda}$. Fix $(t,\boldsymbol{\hat x}),(s,\boldsymbol{\hat y})\in\boldsymbol{\hat\Lambda}$ and $(t_0,\boldsymbol x_0),(s_0,\boldsymbol y_0)\in\boldsymbol\Lambda$. We observe that, for every $\mathbf z\in\R^d$, we have
\begin{align*}
&\big\|\boldsymbol{\hat x}(\cdot\wedge t) - \boldsymbol x_0(\cdot\wedge t_0) - \mathbf z1_{[t,T]}\big\|_\infty \\
&= \ \max\Big(\sup_{0\leq r<t}\big|\boldsymbol{\hat x}(r) - \boldsymbol x_0(r\wedge t_0)\big|,\max_{t\leq r\leq T}\big|\boldsymbol{\hat x}(t) - \boldsymbol x_0(r\wedge t_0) - \mathbf z\big|\Big) \\
&= \ \max\Big(\big\|\boldsymbol{\hat x}(\cdot\wedge t-) - \boldsymbol x_0(\cdot\wedge t\wedge t_0)\big\|_\infty,\big\|\boldsymbol{\hat x}(t) - \boldsymbol x_0(t\vee(r\wedge t_0)) - \mathbf z\big\|_\infty\Big),
\end{align*}
where $\boldsymbol{\hat x}(\cdot\wedge t-)$ is given by \eqref{xt-}. Similarly
\begin{align*}
&\big\|\boldsymbol{\hat y}(\cdot\wedge s) - \boldsymbol y_0(\cdot\wedge s_0) - \mathbf z1_{[s,T]}\big\|_\infty \\
&= \ \max\Big(\big\|\boldsymbol{\hat y}(\cdot\wedge s-) - \boldsymbol y_0(\cdot\wedge s\wedge s_0)\big\|_\infty,\big\|\boldsymbol{\hat y}(s) - \boldsymbol y_0(s\vee(r\wedge s_0)) - \mathbf z\big\|_\infty\Big).
\end{align*}
As a consequence, using the elementary inequality $\max\{a,b\}-\max\{c,d\}\leq\max\{a-c,b-d\}$, valid for every $a,b,c,d\in\R$, we obtain (recall that the function $\hat\kappa_\infty^{(t_0,\boldsymbol x_0)}$ is defined by \eqref{kappa})
\begin{align*}
&\hat\kappa_\infty^{(t_0,\boldsymbol x_0)}(t,\boldsymbol{\hat x}) - \hat\kappa_\infty^{(s_0,\boldsymbol y_0)}(s,\boldsymbol{\hat y}) \\
&= \int_{\R^d} \Big(\big\|\boldsymbol{\hat x}(\cdot\wedge t) - \boldsymbol{x}_0(\cdot\wedge t_0) - \mathbf z\,1_{[t,T]}\big\|_\infty - \big\|\boldsymbol{\hat y}(\cdot\wedge s) - \boldsymbol{y}_0(\cdot\wedge s_0) - \mathbf z\,1_{[s,T]}\big\|_\infty\Big)\,\zeta(\mathbf z)\,d\mathbf z \\
&= \int_{\R^d} \bigg(\max\Big(\big\|\boldsymbol{\hat x}(\cdot\wedge t-) - \boldsymbol x_0(\cdot\wedge t\wedge t_0)\big\|_\infty,\big\|\boldsymbol{\hat x}(t) - \boldsymbol x_0(t\vee(r\wedge t_0)) - \mathbf z\big\|_\infty\Big) \\
&\quad - \max\Big(\big\|\boldsymbol{\hat y}(\cdot\wedge s-) - \boldsymbol y_0(\cdot\wedge s\wedge s_0)\big\|_\infty,\big\|\boldsymbol{\hat y}(s) - \boldsymbol y_0(s\vee(r\wedge s_0)) - \mathbf z\big\|_\infty\Big)\bigg)\,\zeta(\mathbf z)\,d\mathbf z \\
&\leq \int_{\R^d} \bigg(\max\Big(\big\|\boldsymbol{\hat x}(\cdot\wedge t-) - \boldsymbol x_0(\cdot\wedge t\wedge t_0)\big\|_\infty - \big\|\boldsymbol{\hat y}(\cdot\wedge s-) - \boldsymbol y_0(\cdot\wedge s\wedge s_0)\big\|_\infty,\big\|\boldsymbol{\hat x}(t) \\
&\quad - \boldsymbol x_0(t\vee(r\wedge t_0)) - \mathbf z\big\|_\infty - \mathbf z\big| - \big\|\boldsymbol{\hat y}(s) - \boldsymbol y_0(s\vee(r\wedge s_0)) - \mathbf z\big\|_\infty\Big)\bigg)\,\zeta(\mathbf z)\,d\mathbf z.
\end{align*}
Now, notice that
\begin{align*}
&\big\|\boldsymbol{\hat x}(\cdot\wedge t-) - \boldsymbol x_0(\cdot\wedge t\wedge t_0)\big\|_\infty - \big\|\boldsymbol{\hat y}(\cdot\wedge s-) - \boldsymbol y_0(\cdot\wedge s\wedge s_0)\big\|_\infty \\
&\hspace{3cm}\leq \ \big\|\boldsymbol{\hat x}(\cdot\wedge t-) - \boldsymbol x_0(\cdot\wedge t\wedge t_0) - \boldsymbol{\hat y}(\cdot\wedge s-) + \boldsymbol y_0(\cdot\wedge s\wedge s_0)\big\|_\infty \\
&\hspace{3cm}\leq \ \big\|\boldsymbol{\hat x}(\cdot\wedge t-) - \boldsymbol{\hat y}(\cdot\wedge s-)\big\|_\infty + \big\|\boldsymbol x_0(\cdot\wedge t\wedge t_0) - \boldsymbol y_0(\cdot\wedge s\wedge s_0)\big\|_\infty.
\end{align*}
On the other hand, we have
\begin{align*}
&\big\|\boldsymbol{\hat x}(t) - \boldsymbol x_0(t\vee(r\wedge t_0)) - \mathbf z\big\|_\infty - \big\|\boldsymbol{\hat y}(s) - \boldsymbol y_0(s\vee(r\wedge s_0)) - \mathbf z\big\|_\infty \\
&\hspace{3cm}\leq \ \big\|\boldsymbol{\hat x}(t) - \boldsymbol x_0(t\vee(\cdot\wedge t_0)) - \boldsymbol{\hat y}(s) + \boldsymbol y_0(s\vee(\cdot\wedge s_0))\big\|_\infty \\
&\hspace{3cm}\leq \ \big|\boldsymbol{\hat x}(t) - \boldsymbol{\hat y}(s)\big| + \big\|\boldsymbol x_0(t\vee(\cdot\wedge t_0)) - \boldsymbol y_0(s\vee(\cdot\wedge s_0))\big\|_\infty.
\end{align*}
In conclusion, we obtain
\begin{align*}
\big|\hat\kappa_\infty^{(t_0,\boldsymbol x_0)}(t,\boldsymbol{\hat x}) - \hat\kappa_\infty^{(s_0,\boldsymbol y_0)}(s,\boldsymbol{\hat y})\big| \ \leq \ \big\|\boldsymbol{\hat x}(\cdot\wedge t-) - \boldsymbol{\hat y}(\cdot\wedge s-)\big\|_\infty + \big|\boldsymbol{\hat x}(t) - \boldsymbol{\hat y}(s)\big| & \\
+ \ \big\|\boldsymbol x_0(\cdot\wedge t\wedge t_0) - \boldsymbol y_0(\cdot\wedge s\wedge s_0)\big\|_\infty + \big\|\boldsymbol x_0(t\vee(\cdot\wedge t_0)) - \boldsymbol y_0(s\vee(\cdot\wedge s_0))\big\|_\infty &.
\end{align*}
This implies that
\begin{align*}
&\bigg|\frac{\hat\kappa_\infty^{(t_0,\boldsymbol x_0)}(t,\boldsymbol{\hat x})}{1 + C_\zeta + \hat\kappa_\infty^{(t_0,\boldsymbol x_0)}(t,\boldsymbol{\hat x})} - \frac{\hat\kappa_\infty^{(s_0,\boldsymbol y_0)}(s,\boldsymbol{\hat y})}{1 + C_\zeta + \hat\kappa_\infty^{(s_0,\boldsymbol y_0)}(s,\boldsymbol{\hat y})}\bigg| \notag \\
&= \frac{(1 + C_\zeta)\,\big|\hat\kappa_\infty^{(t_0,\boldsymbol x_0)}(t,\boldsymbol{\hat x}) - \hat\kappa_\infty^{(s_0,\boldsymbol y_0)}(s,\boldsymbol{\hat y})\big|}{\big(1 + C_\zeta + \hat\kappa_\infty^{(t_0,\boldsymbol x_0)}(t,\boldsymbol{\hat x})\big)\big(1 + C_\zeta + \hat\kappa_\infty^{(s_0,\boldsymbol y_0)}(s,\boldsymbol{\hat y})\big)} \notag \\
&\leq (1 + C_\zeta)\frac{\big\|\boldsymbol{\hat x}(\cdot\wedge t-) - \boldsymbol{\hat y}(\cdot\wedge s-)\big\|_\infty + \big|\boldsymbol{\hat x}(t) - \boldsymbol{\hat y}(s)\big|}{\big(1 + C_\zeta + \hat\kappa_\infty^{(t_0,\boldsymbol x_0)}(t,\boldsymbol{\hat x})\big)\big(1 + C_\zeta + \hat\kappa_\infty^{(s_0,\boldsymbol y_0)}(s,\boldsymbol{\hat y})\big)} \\
&\quad + (1 + C_\zeta)\frac{\big\|\boldsymbol x_0(\cdot\wedge t\wedge t_0) - \boldsymbol y_0(\cdot\wedge s\wedge s_0)\big\|_\infty + \big\|\boldsymbol x_0(t\vee(\cdot\wedge t_0)) - \boldsymbol y_0(s\vee(\cdot\wedge s_0))\big\|_\infty}{\big(1 + C_\zeta + \hat\kappa_\infty^{(t_0,\boldsymbol x_0)}(t,\boldsymbol{\hat x})\big)\big(1 + C_\zeta + \hat\kappa_\infty^{(s_0,\boldsymbol y_0)}(s,\boldsymbol{\hat y})\big)}. \notag 
\end{align*}
By the latter inequality, we obtain (denoting $\boldsymbol{\hat x}^t(\cdot):=\boldsymbol{\hat x}(\cdot\wedge t)$ and $\boldsymbol{\hat y}^s(\cdot):=\boldsymbol{\hat y}(\cdot\wedge s)$)
\begin{align}\label{Cont_chi_proof}
&\big|\hat\chi_\infty^{(t_0,\boldsymbol x_0)}(t,\boldsymbol{\hat x}) - \hat\chi_\infty^{(s_0,\boldsymbol y_0)}(s,\boldsymbol{\hat y})\big| \\
&\leq \ \int_0^{+\infty} \bigg|\frac{\hat\kappa_\infty^{(t_0,\boldsymbol x_0)}\big((t + r)\wedge T,\boldsymbol{\hat x}^t\big)}{1 + C_\zeta + \hat\kappa_\infty^{(t_0,\boldsymbol x_0)}\big((t + r)\wedge T,\boldsymbol{\hat x}^t\big)} - \frac{\hat\kappa_\infty^{(s_0,\boldsymbol y_0)}\big((s + r)\wedge T,\boldsymbol{\hat y}^s\big)}{1 + C_\zeta + \hat\kappa_\infty^{(s_0,\boldsymbol y_0)}\big((s + r)\wedge T,\boldsymbol{\hat y}^s\big)}\bigg|\,\eta(r)\,dr \notag \\
&\leq \ \int_0^{+\infty} \frac{(1 + C_\zeta)\,\big\|\boldsymbol{\hat x}^t(\cdot\wedge((t+r)\wedge T)-) - \boldsymbol{\hat y}^s(\cdot\wedge((s+r)\wedge T)-)\big\|_\infty}{\big(1 + C_\zeta + \hat\kappa_\infty^{(t_0,\boldsymbol x_0)}((t + r)\wedge T,\boldsymbol{\hat x}^t)\big)\big(1 + C_\zeta + \hat\kappa_\infty^{(s_0,\boldsymbol y_0)}((s + r)\wedge T,\boldsymbol{\hat y}^s)\big)}\,\eta(r)\,dr \notag \\
&+ \int_0^{+\infty} \frac{(1 + C_\zeta)\,\big|\boldsymbol{\hat x}^t((t+r)\wedge T) - \boldsymbol{\hat y}^s((s+r)\wedge T)\big|}{\big(1 + C_\zeta + \hat\kappa_\infty^{(t_0,\boldsymbol x_0)}((t + r)\wedge T,\boldsymbol{\hat x}^t)\big)\big(1 + C_\zeta + \hat\kappa_\infty^{(s_0,\boldsymbol y_0)}((s + r)\wedge T,\boldsymbol{\hat y}^s)\big)}\,\eta(r)\,dr \notag \\
&+ \int_0^{+\infty} \frac{(1 + C_\zeta)\,\big\|\boldsymbol x_0(\cdot\wedge(t + r)\wedge t_0) - \boldsymbol y_0(\cdot\wedge(s + r)\wedge s_0)\big\|_\infty}{\big(1 + C_\zeta + \hat\kappa_\infty^{(t_0,\boldsymbol x_0)}((t + r)\wedge T,\boldsymbol{\hat x}^t)\big)\big(1 + C_\zeta + \hat\kappa_\infty^{(s_0,\boldsymbol y_0)}((s + r)\wedge T,\boldsymbol{\hat y}^s)\big)}\,\eta(r)\,dr \notag \\
&+ \int_0^{+\infty} \frac{(1 + C_\zeta)\,\big\|\boldsymbol x_0(((t + r)\wedge T)\vee(\cdot\wedge t_0)) - \boldsymbol y_0(((s + r)\wedge T)\vee(\cdot\wedge s_0))\big\|_\infty}{\big(1 + C_\zeta +  \hat\kappa_\infty^{(t_0,\boldsymbol x_0)}((t + r)\wedge T,\boldsymbol{\hat x}^t)\big)\big(1 + C_\zeta + \hat\kappa_\infty^{(s_0,\boldsymbol y_0)}((s + r)\wedge T,\boldsymbol{\hat y}^s)\big)}\,\eta(r)\,dr, \notag
\end{align}
where $\boldsymbol{\hat x}^t(\cdot\wedge((t+r)\wedge T)-)$ and $\boldsymbol{\hat y}^s(\cdot\wedge((s+r)\wedge T)-)$ are defined as in \eqref{xt-}. Notice that, for every fixed $r\geq0$,
\begin{equation}\label{Cont_chi_proof1}
\big|\boldsymbol{\hat x}^t((t+r)\wedge T) - \boldsymbol{\hat y}^s((s+r)\wedge T)\big| \ = \ \big|\boldsymbol{\hat x}(t) - \boldsymbol{\hat y}(s)\big|.
\end{equation}
On the other hand, concerning the term $\|\boldsymbol{\hat x}^t(\cdot\wedge((t+r)\wedge T)-) - \boldsymbol{\hat y}^s(\cdot\wedge((s+r)\wedge T)-)\|_\infty$ we distinguish four cases, for every fixed $r>0$:
\begin{itemize}
\item if both $t<T$ and $s<T$, then $\|\boldsymbol{\hat x}^t(\cdot\wedge((t+r)\wedge T)-) - \boldsymbol{\hat y}^s(\cdot\wedge((s+r)\wedge T)-)\|_\infty=\|\boldsymbol{\hat x}(\cdot\wedge t) - \boldsymbol{\hat y}(\cdot\wedge s)\|_\infty$;
\item if $t=T$ and $s<T$, it holds that
\begin{align*}
&\big\|\boldsymbol{\hat x}^t(\cdot\wedge((t+r)\wedge T)-) - \boldsymbol{\hat y}^s(\cdot\wedge((s+r)\wedge T)-)\big\|_\infty \\
&= \ \big\|\boldsymbol{\hat x}(\cdot\wedge T-) - \boldsymbol{\hat y}(\cdot\wedge s)\big\|_\infty \ \leq \ \big\|\boldsymbol{\hat x}(\cdot) - \boldsymbol{\hat y}(\cdot\wedge s)\big\|_\infty;
\end{align*}
\item similarly to the previous case, if $t<T$ and $s=T$, then $\|\boldsymbol{\hat x}^t(\cdot\wedge((t+r)\wedge T)-) - \boldsymbol{\hat y}^s(\cdot\wedge((s+r)\wedge T)-)\|_\infty\leq\|\boldsymbol{\hat x}(\cdot\wedge t) - \boldsymbol{\hat y}(\cdot)\|_\infty$;
\item finally, if both $t=T$ and $s=T$, it holds that
\begin{align*}
&\big\|\boldsymbol{\hat x}^t(\cdot\wedge((t+r)\wedge T)-) - \boldsymbol{\hat y}^s(\cdot\wedge((s+r)\wedge T)-)\big\|_\infty \\
&= \ \big\|\boldsymbol{\hat x}(\cdot\wedge T-) - \boldsymbol{\hat y}(\cdot\wedge T-)\big\|_\infty \ \leq \ \big\|\boldsymbol{\hat x}(\cdot) - \boldsymbol{\hat y}(\cdot)\big\|_\infty.
\end{align*}
\end{itemize}
In conclusion, for every $r>0$ and any $t,s\in[0,T]$, it holds that,
\begin{equation}\label{Cont_chi_proof2}
\big\|\boldsymbol{\hat x}^t(\cdot\wedge((t+r)\wedge T)-) - \boldsymbol{\hat y}^s(\cdot\wedge((s+r)\wedge T)-)\big\|_\infty \ \leq \ \big\|\boldsymbol{\hat x}(\cdot\wedge t) - \boldsymbol{\hat y}(\cdot\wedge s)\big\|_\infty,
\end{equation}
where $\boldsymbol{\hat x}(\cdot\wedge t)=\boldsymbol{\hat x}(\cdot)$ if $t=T$ and, similarly, $\boldsymbol{\hat y}(\cdot\wedge s)=\boldsymbol{\hat y}(\cdot)$ if $s=T$.\\
Plugging \eqref{Cont_chi_proof1} and \eqref{Cont_chi_proof2} into \eqref{Cont_chi_proof}, we conclude that
\begin{align*}
&\big|\hat\chi_\infty^{(t_0,\boldsymbol x_0)}(t,\boldsymbol{\hat x}) - \hat\chi_\infty^{(s_0,\boldsymbol y_0)}(s,\boldsymbol{\hat y})\big| \\
&\leq \ \int_0^{+\infty} \frac{(1 + C_\zeta)\,\big\|\boldsymbol{\hat x}(\cdot\wedge t) - \boldsymbol{\hat y}(\cdot\wedge s)\big\|_\infty}{\big(1 + C_\zeta + \hat\kappa_\infty^{(t_0,\boldsymbol x_0)}((t + r)\wedge T,\boldsymbol{\hat x}^t)\big)\big(1 + C_\zeta + \hat\kappa_\infty^{(s_0,\boldsymbol y_0)}((s + r)\wedge T,\boldsymbol{\hat y}^s)\big)}\,\eta(r)\,dr \notag \\
&+ \int_0^{+\infty} \frac{(1 + C_\zeta)\,\big|\boldsymbol{\hat x}(t) - \boldsymbol{\hat y}(s)\big|}{\big(1 + C_\zeta + \hat\kappa_\infty^{(t_0,\boldsymbol x_0)}((t + r)\wedge T,\boldsymbol{\hat x}^t)\big)\big(1 + C_\zeta + \hat\kappa_\infty^{(s_0,\boldsymbol y_0)}((s + r)\wedge T,\boldsymbol{\hat y}^s)\big)}\,\eta(r)\,dr \notag \\
&+ \int_0^{+\infty} \frac{(1 + C_\zeta)\,\big\|\boldsymbol x_0(\cdot\wedge(t + r)\wedge t_0) - \boldsymbol y_0(\cdot\wedge(s + r)\wedge s_0)\big\|_\infty}{\big(1 + C_\zeta + \hat\kappa_\infty^{(t_0,\boldsymbol x_0)}((t + r)\wedge T,\boldsymbol{\hat x}^t)\big)\big(1 + C_\zeta + \hat\kappa_\infty^{(s_0,\boldsymbol y_0)}((s + r)\wedge T,\boldsymbol{\hat y}^s)\big)}\,\eta(r)\,dr \notag \\
&+ \int_0^{+\infty} \frac{(1 + C_\zeta)\,\big\|\boldsymbol x_0(((t + r)\wedge T)\vee(\cdot\wedge t_0)) - \boldsymbol y_0(((s + r)\wedge T)\vee(\cdot\wedge s_0))\big\|_\infty}{\big(1 + C_\zeta + \hat\kappa_\infty^{(t_0,\boldsymbol x_0)}((t + r)\wedge T,\boldsymbol{\hat x}^t)\big)\big(1 + C_\zeta + \hat\kappa_\infty^{(s_0,\boldsymbol y_0)}((s + r)\wedge T,\boldsymbol{\hat y}^s)\big)}\,\eta(r)\,dr. \notag
\end{align*}
This proves the continuity of the map $\big((t_0,\boldsymbol x_0),(t,\boldsymbol{\hat x})\big)\mapsto\hat\chi_\infty^{(t_0,\boldsymbol x_0)}(t,\boldsymbol{\hat x})$ on $\boldsymbol\Lambda\times\boldsymbol{\hat\Lambda}$.

\vspace{1mm}

\noindent\textsc{Step III}. \emph{Proof of items }3) \emph{and} 4). Regarding item 3), the bound for the horizontal derivative follows from formulae \eqref{HorDer_t<T}-\eqref{HorDer_t=T}, from the fact that $\hat\kappa_\infty^{(t_0,\boldsymbol x_0)}/(1+C_\zeta+\hat\kappa_\infty^{(t_0,\boldsymbol x_0)})\leq1$, and also from equality $\int_0^{+\infty}|\eta'(s)|\,ds=\frac{2}{\textup{e}}$. On the other hand, the bounds concerning the vertical derivatives follow from formulae \eqref{VerDer} and \eqref{VerDer2nd}, from the bounds on the vertical derivatives of $\hat\kappa_\infty^{(t_0,\boldsymbol x_0)}$ given in item 2) of Lemma \ref{L:rho_infty}, and also from inequality $1/(1+C_\zeta+\hat\kappa_\infty^{(t_0,\boldsymbol x_0)})\leq1$ (here we use that $\hat\kappa_\infty^{(t_0,\boldsymbol x_0)}\geq-C_\zeta$, see item 3 of Lemma \ref{L:rho_infty}).

Finally, regarding item 4), the first inequality is a direct consequence of the two inequalities in \eqref{Ineq12}. On the other hand, the second inequality follows from the second inequality in \eqref{Ineq12} together with the fact that $1/(1+C_\zeta+\hat\kappa_\infty^{(t_0,\boldsymbol x_0)})\leq1$, and also from the inequality $\hat\kappa_\infty^{(t_0,\boldsymbol x_0)}/(1+C_\zeta+\hat\kappa_\infty^{(t_0,\boldsymbol x_0)})\leq1$.
\end{proof}

\section{Cylindrical approximation}
\label{AppC}

In the present Appendix we state two results already proved in \cite{cosso_russoStrict,cosso_russoStrong-Visc}, namely Theorem 3.5 in \cite{cosso_russoStrict} and Theorem 3.12 in \cite{cosso_russoStrong-Visc}, which correspond respectively to Lemma \ref{L:Classical} and Lemma \ref{L:Smoothing} below. Notice however that in \cite{cosso_russoStrict,cosso_russoStrong-Visc} the pathwise derivatives are defined in an alternative manner. For this reason, in order to help the reader, we prefer to report the proof of these two results in the present setting.

\subsection{The deterministic calculus via regularization}

We begin recalling some results from the deterministic calculus of regularization, as developed in Section 3.2 of \cite{digirfabbrirusso13} and Section 2.2 of \cite{cosso_russoStrict}, for which we refer for all the details. The only difference with respect to \cite{digirfabbrirusso13} and \cite{cosso_russoStrict} being that here we consider $\R^d$-valued paths (with $d$ not necessarily equal to $1$), even if, as usual, we rely on the one-dimensional theory, as we work component by component.

Firstly, for every $t\geq0$ and any function $\boldsymbol f\colon[0,t]\rightarrow\R^d$ we define the following extensions to the entire real line:
\[
\boldsymbol f_{(0,t]}(s) \ := \
\begin{cases}
\boldsymbol 0, & s>t, \\
\boldsymbol f(s), & s\in[0,t], \\
\boldsymbol f(0), & s<0,
\end{cases} \qquad\qquad
\boldsymbol f_{[0,t]}(s) \ := \
\begin{cases}
\boldsymbol f(t), & s>t, \\
\boldsymbol f(s), & s\in[0,t], \\
\boldsymbol 0, & s<0.
\end{cases}
\]

\begin{defn}
\label{D:DeterministicIntegral}
Let $\boldsymbol f\colon[0,t]\rightarrow\R^d$ and $g\colon[0,t]\rightarrow\R$ be c\`{a}dl\`{a}g functions. When the  limit
\[
\int_{[0,t]}g(s)\,d^-\boldsymbol f(s) \ := \ \lim_{\eps\rightarrow0^+}\int_{-\infty}^{+\infty} g_{(0,t]}(s)\,\frac{\boldsymbol f_{[0,t]}(s+\eps)-\boldsymbol f_{[0,t]}(s)}{\eps}\,ds
\]
exists and it is finite, we denote it by $\int_{[0,t]} g\,d^-\boldsymbol f$ and call it \textbf{forward integral of $g$ with respect to $\boldsymbol f$}.
\end{defn}

We recall from \cite{cosso_russoStrict}, Proposition 2.11, the following integration by parts formula, which will be used several times in this Appendix.

\begin{prop}
\label{P:IntbyParts}
Let $\boldsymbol f\colon[0,t]\rightarrow\R^d$ and $g\colon[0,t]\rightarrow\R$ be c\`{a}dl\`{a}g functions, with $g$ being of bounded variation. We have
the \textbf{integration by parts formula} 
\[
\int_{[0,t]}  g(s)\,d^-\boldsymbol f(s) \ = \ g(t) \,\boldsymbol f(t)  - \int_{(0,t]} \boldsymbol f(s)\,dg(s),
\]
where $\int_{(0,t]} \boldsymbol f(s)\,dg(s)$ is a Lebesgue-Stieltjes integral on $(0,t]$.
\end{prop}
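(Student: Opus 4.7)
My plan is to start from the definition of the forward integral, shift the increment from $\boldsymbol f$ to $g$ by a change of variable, and then exploit the bounded variation of $g$ to recognize the limit as a Lebesgue--Stieltjes integral plus boundary terms. Setting
\[
I_\eps \ := \ \frac{1}{\eps}\int_{-\infty}^{+\infty} g_{(0,t]}(s)\bigl[\boldsymbol f_{[0,t]}(s+\eps)-\boldsymbol f_{[0,t]}(s)\bigr]\,ds,
\]
I would first substitute $u=s+\eps$ in the piece involving $\boldsymbol f_{[0,t]}(s+\eps)$ and relabel to obtain
\[
I_\eps \ = \ \int_{-\infty}^{+\infty} \frac{g_{(0,t]}(s-\eps)-g_{(0,t]}(s)}{\eps}\,\boldsymbol f_{[0,t]}(s)\,ds.
\]
Inspecting the definitions of the two extensions, this new integrand vanishes outside the three disjoint intervals $(0,\eps]$, $(\eps,t]$ and $(t,t+\eps]$, yielding a decomposition $I_\eps=T_1(\eps)+T_2(\eps)+T_3(\eps)$ with respective integrands $(g(0)-g(s))\boldsymbol f(s)$, $(g(s-\eps)-g(s))\boldsymbol f(s)$ and $g(s-\eps)\boldsymbol f(t)$.

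The two boundary terms are then handled by elementary c\`adl\`ag arguments. For $T_1(\eps)$, the right-continuity of $g$ at $0$ together with the local boundedness of $\boldsymbol f$ gives $T_1(\eps)\to0$. For $T_3(\eps)$, a change of variable $u=s-\eps$ produces $T_3(\eps)=\boldsymbol f(t)\,\tfrac{1}{\eps}\int_{t-\eps}^{t}g(u)\,du$, which converges to $g(t-)\boldsymbol f(t)$ because $g$ admits a left limit at $t$. The bulk term is the core of the argument and the place where the bounded variation of $g$ enters: using $g(s-\eps)-g(s)=-\int_{(s-\eps,s]}dg(r)$ and Fubini's theorem I would rewrite
\[
T_2(\eps) \ = \ -\int_{(0,t)} \frac{1}{\eps}\int_{\max(r,\eps)}^{\min(r+\eps,t)}\boldsymbol f(s)\,ds\,dg(r),
\]
the range of $r$ being $(0,t)$ rather than $(0,t]$ because the inner interval shrinks to the singleton $\{t\}$ at $r=t$. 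The inner average converges pointwise on $(0,t)$ to $\boldsymbol f(r)$ by the right-continuity of $\boldsymbol f$, and is uniformly bounded by $\|\boldsymbol f\|_\infty$, so Lebesgue's dominated convergence theorem for the signed measure $dg$ of finite total variation yields $T_2(\eps)\to-\int_{(0,t)}\boldsymbol f(r)\,dg(r)$.

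Summing the three limits gives
\[
\int_{[0,t]} g\,d^-\boldsymbol f \ = \ g(t-)\boldsymbol f(t) - \int_{(0,t)}\boldsymbol f\,dg \ = \ g(t)\boldsymbol f(t) - \int_{(0,t]}\boldsymbol f\,dg,
\]
where the second equality recombines the atom $\Delta g(t)\boldsymbol f(t)=(g(t)-g(t-))\boldsymbol f(t)$ of $\int\boldsymbol f\,dg$ at $t$ with the left-limit boundary term. The main obstacle lies in this careful bookkeeping at the right endpoint: the c\`adl\`ag nature of $g$ produces a left-limit contribution from the regularization that must be exactly compensated by the missing atom of $dg$ at $t$. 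The left endpoint is by contrast tame, because the chosen extension $g_{(0,t]}$ equals $g(0)$ on $(-\infty,0)$ and precisely kills the boundary increment of $\boldsymbol f_{[0,t]}$ at $s=0$, so no $g(0)\boldsymbol f(0)$ term survives.
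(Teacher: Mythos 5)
Your proof is correct. The paper itself does not prove Proposition~\ref{P:IntbyParts}; it imports it from Proposition~2.11 of \cite{cosso_russoStrict}, so there is no in-text argument to compare against. Your argument is nevertheless the natural one: shift the difference quotient from $\boldsymbol f_{[0,t]}$ to $g_{(0,t]}$ by the change of variable $u=s+\eps$, partition the support of the resulting integrand into $(0,\eps]$, $(\eps,t]$, $(t,t+\eps]$, kill the left boundary term by right-continuity of $g$ at $0$, recognize the right boundary term as $g(t-)\boldsymbol f(t)$ via the left limit of $g$, and convert the bulk term to a Lebesgue--Stieltjes integral by writing $g(s-\eps)-g(s)=-\int_{(s-\eps,s]}dg$, applying Fubini (justified since $|dg|$ is finite and the integrand is bounded with compact support), and passing to the limit by dominated convergence using right-continuity of $\boldsymbol f$. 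The reconciliation at the right endpoint, $g(t-)\boldsymbol f(t)-\int_{(0,t)}\boldsymbol f\,dg = g(t)\boldsymbol f(t)-\int_{(0,t]}\boldsymbol f\,dg$, correctly accounts for the atom $dg(\{t\})=g(t)-g(t-)$. All the steps are rigorous: the change of variable is legitimate because the product $g_{(0,t]}(\cdot)\boldsymbol f_{[0,t]}(\cdot+\eps)$ is bounded and compactly supported, and the identification of the range of $r$ as $(0,t)$ in the Fubini step is handled carefully. I see no gap.
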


\subsection{Cylinder terminal condition $\xi$}

\begin{lem}\label{L:Classical}
Suppose that $\xi$ is a cylinder (or tame) function, in the sense that it admits the representation
\begin{align*}
\xi(\boldsymbol x) \ &= \ g\bigg(\int_{[0,T]}\psi_0(t)\,d^-\boldsymbol x(t),\ldots,\int_{[0,T]}\psi_n(t)\,d^-\boldsymbol x(t)\bigg), \qquad \boldsymbol x\in C([0,T];\R^d),
\end{align*}
for some non-negative integer $n$, where we have the following.
\begin{itemize}
\item $g\colon\R^{d(n+1)}\rightarrow\R$ is of class $C^2(\R^{d(n+1)})$ and, together with its first and second-order partial derivatives, satisfies a polynomial growth condition;
\item $\psi_0,\ldots,\psi_n\colon[0,T]\rightarrow\R$ are of class $C^2([0,T])$.
\end{itemize}
Then, the function $v$ defined by \eqref{v} is in $\boldsymbol C^{1,2}(\boldsymbol\Lambda)$ and is a classical (smooth) solution of equation \eqref{PPDE}.
\end{lem}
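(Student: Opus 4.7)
The plan is to express $v(t,\boldsymbol x)$ as a smooth function of finitely many real quantities depending on $(t,\boldsymbol x)$, and then deduce both the pathwise regularity of $v$ and the path-dependent heat equation from the classical backward heat-type PDE satisfied by that function in finite dimension.

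For every $(t,\boldsymbol x)\in\boldsymbol\Lambda$ and $k=0,\ldots,n$, set
\[
I^k(t,\boldsymbol x) \ := \ \int_{[0,t]}\psi_k(s)\,d^-\boldsymbol x(s)\ \in\ \R^d, \qquad Z^k(t) \ := \ \int_t^T\psi_k(s)\,d\boldsymbol W_s,
\]
and $I:=(I^0,\ldots,I^n)$, $Z:=(Z^0,\ldots,Z^n)$. Using \eqref{W^t,x}, the definition of the forward integral, and Proposition \ref{P:IntbyParts} (after a standard density argument on $\psi_k$ if needed), one obtains $\int_{[0,T]}\psi_k(s)\,d^-\boldsymbol W_s^{t,\boldsymbol x}=I^k(t,\boldsymbol x)+Z^k(t)$ $\P$-a.s. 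Hence $v(t,\boldsymbol x)=h(t,I(t,\boldsymbol x))$ with
\[
h(t,a)\ :=\ \E[g(a+Z(t))], \qquad a=(a^0,\ldots,a^n)\in\R^{d(n+1)}.
\]
The vector $Z(t)$ is centered Gaussian in $\R^{d(n+1)}$ with covariances $\mathrm{Cov}(Z^k_i(t),Z^l_j(t))=\Sigma_{kl}(t)\delta_{ij}$, where $\Sigma_{kl}(t):=\int_t^T\psi_k(s)\psi_l(s)\,ds$, so $h$ is the convolution of $g$ with the associated Gaussian density $\phi_t$. Polynomial growth of $g$ and its first two derivatives together with differentiation under the integral yield $h\in C^{1,2}([0,T)\times\R^{d(n+1)})$ with polynomial growth of all derivatives, and the identity $\partial_t\phi_t=-\frac{1}{2}\sum_{k,l,i}\psi_k(t)\psi_l(t)\partial^2_{z^k_i z^l_i}\phi_t$ (coming from $\Sigma'_{kl}(t)=-\psi_k(t)\psi_l(t)$) followed by two integrations by parts in $z$ produces the finite-dimensional backward PDE
\[
\partial_t h(t,a) + \frac{1}{2}\sum_{k,l=0}^n\psi_k(t)\psi_l(t)\sum_{i=1}^d\partial^2_{a^k_i a^l_i}h(t,a) \ = \ 0, \qquad h(T,\cdot)=g.
\]

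Next, I would lift $v$ by setting $\hat v(t,\boldsymbol x,\mathbf y):=h(t,\tilde I(t,\boldsymbol x,\mathbf y))$ with $\tilde I^k(t,\boldsymbol x,\mathbf y):=I^k(t,\boldsymbol x)+\psi_k(t)(\mathbf y-\boldsymbol x(t))$; the identity $\hat v(t,\boldsymbol x,\boldsymbol x(t))=v(t,\boldsymbol x)$ is immediate, so $\hat v$ is a lifting of $v$ in the sense of Definition \ref{D:Lift}. The dependence on $\mathbf y$ is linear and the dependence on $t$ enters only through $h$ and the factor $\psi_k(t)$, the map $\delta\mapsto I^k(t+\delta,\boldsymbol x(\cdot\wedge t))$ being constant in $\delta\geq0$ because the forward integral against a constant path vanishes. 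All pathwise derivatives of $\hat v$ therefore exist and are continuous, and a direct computation gives
\[
\partial_t^H\hat v(t,\boldsymbol x)=\partial_t h(t,I(t,\boldsymbol x)),\quad \partial^V_{x_i}\hat v(t,\boldsymbol x,\boldsymbol x(t))=\sum_k\psi_k(t)\,\partial_{a^k_i}h(t,I(t,\boldsymbol x)),
\]
\[
\partial^V_{x_i x_j}\hat v(t,\boldsymbol x,\boldsymbol x(t))=\sum_{k,l}\psi_k(t)\psi_l(t)\,\partial^2_{a^k_i a^l_j}h(t,I(t,\boldsymbol x)).
\]
Hence $\hat v\in\boldsymbol C^{1,2}(\boldsymbol\Lambda\times\R^d)$ and so $v\in\boldsymbol C^{1,2}(\boldsymbol\Lambda)$ by Definition \ref{C^1,2(Lambda)}. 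Substituting into $\partial_t^H v+\frac{1}{2}\textup{tr}[\partial^V_{\boldsymbol x\boldsymbol x}v]$ and invoking the PDE for $h$ yields zero, while the terminal condition $v(T,\cdot)=\xi$ follows from $h(T,\cdot)=g$ and $Z(T)=0$.

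The main obstacle is the first part, namely the joint $C^{1,2}$-regularity of $h$ and the verification of its PDE; this reduces to standard differentiation under the Gaussian integral (dominated by the polynomial growth of $g$ and its first two derivatives), followed by the integration-by-parts identity that transfers the $t$-derivative of $\phi_t$ onto second-order partial derivatives of $g(a+\cdot)$.
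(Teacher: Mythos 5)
Your approach coincides with the paper's: both reduce $v$ to a finite-dimensional Feynman--Kac function (your $h$, the paper's $\hat V$), lift it to $\hat v(t,\boldsymbol x,\mathbf y)=h(t,\tilde I(t,\boldsymbol x,\mathbf y))$, and read off the pathwise derivatives from the chain rule. The only point of divergence is how the finite-dimensional $C^{1,2}$-regularity and PDE for $h$ are established: the paper simply cites Theorem~5.6.1 of Friedman, whereas you derive the PDE directly from the Gaussian convolution $h(t,\cdot)=g\ast\phi_t$ and the heat-kernel identity $\partial_t\phi_t=-\tfrac12\sum_{k,l,i}\psi_k\psi_l\,\partial^2_{z^k_i z^l_i}\phi_t$.

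That direct derivation has a genuine gap. The convolution representation and the heat-kernel identity both presuppose that $Z(t)$ admits a density on $\R^{d(n+1)}$, i.e.\ that the Gram matrix $\Sigma(t)=\big(\int_t^T\psi_k\psi_l\,ds\big)_{k,l}$ is invertible. The hypotheses of the lemma only require $\psi_0,\ldots,\psi_n$ to be continuous; if they are linearly dependent on $[t,T]$ (for instance one of them vanishes near $T$, or two of them coincide), then $\Sigma(t)$ is singular, $\phi_t$ does not exist, and the integration-by-parts step producing the backward PDE is not available. The $C^{0,2}$-regularity in $a$ survives, since differentiating $\E[g(a+Z(t))]$ under the expectation needs no density, but $\partial_t h$ and the PDE require a separate argument in the degenerate case --- e.g.\ regularize $\Sigma(t)$ to $\Sigma(t)+\eps(T-t)I$ by adding an independent small Brownian noise and pass $\eps\to0$, or Taylor-expand $g(a+Z(t))-g(a+Z(t'))$ in the independent Gaussian increment $Z(t)-Z(t')$ and take expectations, using only $g\in C^2$ with polynomially growing derivatives. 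The paper avoids the issue by quoting the general Feynman--Kac theorem, which does not require nondegeneracy of the diffusion.
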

\begin{proof}
Let $(t,\boldsymbol x)\in\boldsymbol\Lambda$ and consider $\boldsymbol W^{t,\boldsymbol x}=(\boldsymbol W_s^{t,\boldsymbol x})_{s\in[0,T]}$ given by \eqref{W^t,x}. From the definition of $v$ in \eqref{v}, we have
\begin{align*}
v(t,\boldsymbol x) \ &= \ \E\bigg[g\bigg(\int_{[0,T]}\psi_0(s)\,d^-\boldsymbol W_s^{t,\boldsymbol x},\ldots,\int_{[0,T]}\psi_n(s)\,d^-\boldsymbol W_s^{t,\boldsymbol x}\bigg)\bigg] \\
&= \ \E\bigg[g\bigg(\int_{[0,t]}\psi_0(s)\,d^-\boldsymbol x(s) + \int_t^T\psi_0(s)\,d\boldsymbol W_s,\ldots\bigg)\bigg],
\end{align*}
where the second equality follows from the fact that the forward integral coincides with the It\^o integral when the integrator is the Brownian motion (or, more generally, a continuous semimartingale), see for instance Proposition 6, Section 3.3, in \cite{RussoValloisSurvey}.

In order to prove that $v\in\boldsymbol C^{1,2}(\boldsymbol\Lambda)$, we consider the following map $\hat v\colon\boldsymbol{\hat\Lambda}\rightarrow\R$ consistent with $v$:
\[
\hat v(t,\boldsymbol{\hat x}) \ = \ \E\bigg[g\bigg(\int_{[0,t]}\psi_0(s)\,d^-\boldsymbol{\hat x}(s) + \int_t^T\psi_0(s)\,d\boldsymbol W_s,\ldots\bigg)\bigg],
\]
for all $(t,\boldsymbol{\hat x})\in\boldsymbol{\hat\Lambda}$. Notice that
\[
\hat v(t,\boldsymbol{\hat x}) \ = \ \hat V\bigg(t,\int_{[0,t]}\psi_0(s)\,d^-\boldsymbol{\hat x}(s)\,,\,\ldots\bigg),
\]
where $\hat V\colon[0,T]\times\R^{d(n+1)}\rightarrow\R$ is given by
\[
\hat V(t,\mathbf z) \ = \ \E\bigg[g\bigg(\mathbf z_0 + \int_t^T \psi_0(s)\, d\boldsymbol W_s\,,\,\ldots\,,\,\mathbf z_n + \int_t^T \psi_n(s)\, d\boldsymbol W_s\bigg)\bigg],
\]
for all $t\in[0,T]$ and $\mathbf z\in\R^{d(n+1)}$, with $\mathbf z=(\mathbf z_0,\ldots,\mathbf z_n)$ and $\mathbf z_0,\ldots,\mathbf z_n\in\R^d$. Let $\sigma\colon[0,T]\rightarrow\R^{d(n+1)\times d}$ be given by
\begin{equation}\label{sigma}
\sigma(t) \ = \
\left[
\begin{array}{c}\psi_0(t)\,I \\\psi_1(t)\,I \\\vdots \\
\psi_n(t)\,I\end{array}
\right]
\end{equation}
where $I$ denotes the $d\times d$ identity matrix. It is well-known (see, for instance, Theorem 5.6.1 in \cite{friedman75vol1}) that $\hat V\in C^{1,2}([0,T]\times\R^{d(n+1)})$ and satisfies (here $\partial_t\hat V(t,\mathbf z)$ denotes the time derivative, while $\partial_{\mathbf z\mathbf z}\hat V(t,\mathbf z)$ is the Hessian matrix of spatial derivatives)
\begin{equation}\label{PDE_V}
\begin{cases}
\vspace{2mm}
\displaystyle \partial_t\hat V(t,\mathbf z) + \frac{1}{2} \text{tr}\big[\sigma(t)\sigma\trans(t)\partial_{\mathbf z\mathbf z}\hat V(t,\mathbf z)\big] \ = \ 0, \qquad &(t,\mathbf z)\in[0,T)\times\R^{d(n+1)}, \\
\hat V(T,\mathbf z) \ = \ g(\mathbf z), &\,\mathbf z\in\R^{d(n+1)}.
\end{cases}
\end{equation}
Let us find the expression of the pathwise derivatives of $\hat v$ in terms of $\hat V$. Concerning the horizontal derivative at $(t,\boldsymbol{\hat x})\in\boldsymbol{\hat\Lambda}$, with $t<T$, we have
\begin{align*}
&\partial_t^H\hat v(t,\boldsymbol{\hat x}) \ = \ \lim_{\delta\rightarrow0^+}\frac{\hat v(t+\delta,\boldsymbol{\hat x}(\cdot\wedge t)) - \hat v(t,\boldsymbol{\hat x})}{\delta} \\
&= \ \lim_{\delta\rightarrow0^+}\frac{1}{\delta}\bigg\{\hat V\bigg(t+\delta,\int_{[0,t+\delta]}\psi_0(s)\,d^-\boldsymbol{\hat x}(s\wedge t),\ldots\bigg) - \hat V\bigg(t,\int_{[0,t]}\psi_0(s)\,d^-\boldsymbol{\hat x}(s),\ldots\bigg)\bigg\} \\
&= \ \lim_{\delta\rightarrow0^+}\frac{1}{\delta}\bigg\{\hat V\bigg(t+\delta,\int_{[0,t]}\psi_0(s)\,d^-\boldsymbol{\hat x}(s),\ldots\bigg) - \hat V\bigg(t,\int_{[0,t]}\psi_0(s)\,d^-\boldsymbol{\hat x}(s),\ldots\bigg)\bigg\} \\
&= \ \partial_t\hat V\bigg(t,\int_{[0,t]}\psi_0(s)\,d^-\boldsymbol{\hat x}(s),\ldots\bigg).
\end{align*}
Moreover, at $t=T$ we have
\begin{align*}
\partial_t^H\hat v(T,\boldsymbol{\hat x}) \ &= \ \lim_{t\rightarrow T-} \partial_t^H\hat v(t,\boldsymbol{\hat x}) \ = \ \lim_{t\rightarrow T-} \partial_t\hat V\bigg(t,\int_{[0,t]}\psi_0(s)\,d^-\boldsymbol{\hat x}(s),\ldots\bigg) \\
&= \ \partial_t\hat V\bigg(T,\int_{[0,T]}\psi_0(s)\,d^-\boldsymbol{\hat x}(s) - \psi_0(T)\,\big(\boldsymbol{\hat x}(T) - \boldsymbol{\hat x}(T-)\big),\ldots\bigg),
\end{align*}
where the latter equality follows from the integration by parts formula of Proposition \ref{P:IntbyParts}. Concerning the first-order vertical derivatives at $(t,\boldsymbol{\hat x})\in\boldsymbol{\hat\Lambda}$,  for every $i=1,\ldots,d$, we have
\begin{align*}
\partial^V_{x_i}\hat v(t,\boldsymbol{\hat x}) \ &= \ \lim_{h\rightarrow0}\frac{\hat v(t,\boldsymbol{\hat x} + h\,\mathbf e_i\,1_{[t,T]}) - \hat v(t,\boldsymbol{\hat x})}{h} \\
&= \ \lim_{h\rightarrow0} \frac{1}{h} \bigg\{\hat V\bigg(t,\int_{[0,t]}\psi_0(s)\,d^-\boldsymbol{\hat x}(s) + \psi_0(t)\,h\,\mathbf e_i\,,\,\ldots\bigg) \\
&\quad \ - \hat V\bigg(t,\int_{[0,t]}\psi_0(s)\,d^-\boldsymbol{\hat x}(s)\,,\,\ldots\bigg)\bigg\} \\
&= \ \bigg\langle\sigma_i(t),\partial_{\mathbf z}\hat V\bigg(t,\int_{[0,t]}\psi_0(s)\,d^-\boldsymbol{\hat x}(s)\,,\,\ldots\bigg)\bigg\rangle,
\end{align*}
where $\langle\cdot,\cdot\rangle$ denotes the scalar product in $\R^{d(n+1)}$, $\sigma_i(t)$ is the $i$-th column of the matrix $\sigma(t)$ in \eqref{sigma}, and $\partial_{\mathbf z}\hat V$ is the gradient of spatial derivatives of $\hat V$.

Concerning the second-order vertical derivatives, it holds that
\[
\partial^V_{\boldsymbol x\boldsymbol x} \hat v(t,\boldsymbol{\hat x}) \ = \ \sigma\trans(t)\partial_{\mathbf z\mathbf z}\hat V\bigg(t,\int_{[0,t]}\psi_0(s)\,d^-\boldsymbol{\hat x}(s)\,,\,\ldots\bigg)\sigma(t).
\]
Since $\hat V\in C^{1,2}([0,T]\times\R^{d(n+1)})$, we deduce that $\hat v\in\boldsymbol C^{1,2}(\boldsymbol{\hat\Lambda})$, so that $v\in\boldsymbol C^{1,2}(\boldsymbol\Lambda)$. Finally, since $\hat V$ is a classical (smooth) solution of equation \eqref{PDE_V}, using the relations between the pathwise derivatives of $\hat v$ (and hence of $v$) and the derivatives of $\hat V$, we deduce that $v$ is a classical (smooth) solution of equation \eqref{PPDE}.
\end{proof}

\subsection{Cylindrical approximation}

\begin{lem}\label{L:Smoothing}
Suppose that $\xi\colon C([0,T];\R^d)\rightarrow\R$ is continuous and satisfies the polynomial growth condition
\[
|\xi(\boldsymbol x)| \ \leq \ M(1 + \|\boldsymbol x\|_\infty^p), \qquad \text{for all }\boldsymbol x\in C([0,T];\R^d),
\]
for some positive constants $M$ and $p$. Then, there exists a sequence $\{\xi_n\}_n$, with $\xi_n$ being a map from $C([0,T];\R^d)$ into $\R$, such that the following
holds.
\begin{enumerate}[\upshape I)]
\item $\{\xi_n\}_n$ converges pointwise to $\xi$ as $n\rightarrow+\infty$.
\item If $\xi$ is bounded then $\xi_n$ is bounded uniformly with respect to $n$.
\item For every $n$, $\xi_n$ is given by
\[
\xi_n(\boldsymbol x) \ = \ g_n\bigg(\int_{[0,T]}\psi_0(t)\,d^-\boldsymbol x(t),\ldots,\int_{[0,T]}\psi_n(t)\,d^-\boldsymbol x(t)\bigg),
\]
where the properties below hold:
\begin{enumerate}[\upshape i)]
\item for every $n$, $g_n\colon\R^{d(n+1)}\rightarrow\R$ is of class $C^\infty(\R^{d(n+1)})$, with partial derivatives of every order satisfying a polynomial growth condition; moreover, $g_n$ satisfies
\[
|g_n(\mathbf z)| \ \leq \ M'\big(1 + |\mathbf z|^{p'}\big), \qquad \text{for all }\mathbf z\in\R^{d(n+1)},
\]
for some positive constants $M'$ and $p'$, not depending on $n$;
\item the functions $\psi_\ell\colon[0,T]\rightarrow\R$, $\ell\geq0$, satisfy what follows:
\begin{enumerate}[\upshape a)]
\item $\psi_\ell$ is of class $C^\infty([0,T])$;
\item $\psi_\ell$ is uniformly bounded with respect to $\ell$;
\item the first derivative of $\psi_\ell$ is bounded in $L^1([0,T])$, uniformly with respect to $\ell$.
\end{enumerate}
\end{enumerate}
\end{enumerate}
\end{lem}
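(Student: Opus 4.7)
The plan is to define $\xi_n(\boldsymbol x) := g_n(c_0(\boldsymbol x), \dots, c_n(\boldsymbol x))$, where $c_\ell(\boldsymbol x) := \int_{[0,T]}\psi_\ell\,d^-\boldsymbol x$ and $g_n$ is a mollification of the continuous map $\mathbf z \mapsto \xi(R_n(\mathbf z))$ for a piecewise-linear reconstruction $R_n:\mathbb{R}^{d(n+1)}\to C([0,T];\mathbb{R}^d)$. The delicate aspect is that $\{\psi_\ell\}$ must be a single sequence fixed once and for all, while $\xi_n$ may use only its first $n+1$ terms; the enumeration must be designed so that these terms carry enough information to recover $\boldsymbol x$ uniformly as $n\to\infty$.

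For the construction of $\{\psi_\ell\}$, I would fix a mollifier $\rho\in C_c^\infty(\mathbb{R})$ with $\rho\ge0$, $\operatorname{supp}\rho\subset[-1,1]$, $\int\rho=1$, and enumerate the dyadic pairs $(k,j)$ with $k\ge0$, $0\le j\le 2^k$ \emph{level-by-level} via a bijection $\ell\leftrightarrow(k(\ell),j(\ell))$. Setting $t_{k,j}:=jT/2^k$, $\rho_k(s):=2^k\rho(2^k s)$, and
\[
\psi_\ell(t)\ :=\ \int_t^T \rho_{k(\ell)}\bigl(s-t_{k(\ell),j(\ell)}\bigr)\,ds
\]
(with an asymmetric modification of $\rho_k$ when $t_{k,j}\in\{0,T\}$ to keep the support in $[0,T]$), each $\psi_\ell$ belongs to $C^\infty([0,T])$, with $\|\psi_\ell\|_\infty\le 1$ and $\|\psi_\ell'\|_{L^1}=\|\rho_k\|_{L^1}\le 1$; thus III)-ii)-a)b)c) hold uniformly in $\ell$. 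Since $\psi_\ell(T)=0$, Proposition \ref{P:IntbyParts} yields $c_\ell(\boldsymbol x)=\int_0^T\boldsymbol x(t)\,\rho_{k(\ell)}(t-t_{k(\ell),j(\ell)})\,dt$, a mollified value of $\boldsymbol x$ at $t_{k(\ell),j(\ell)}$ with bandwidth $2^{-k(\ell)}$, hence $|c_\ell(\boldsymbol x)-\boldsymbol x(t_{k(\ell),j(\ell)})|\le\omega_{\boldsymbol x}(2^{-k(\ell)})$.

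For the construction of $g_n$, given $n$ I let $K=K(n)$ be the largest level $k$ such that every index $\ell$ with $k(\ell)\le k$ lies in $\{0,\dots,n\}$. Define $R_n(\mathbf z)$ as the piecewise-linear function on $[0,T]$ interpolating the values $z_{\ell(K,j)}$ at nodes $t_{K,j}$, $j=0,\dots,2^K$, set $\widetilde G_n(\mathbf z):=\xi(R_n(\mathbf z))$, and take $g_n:=\widetilde G_n\ast\sigma_{\delta_n}$ with $\sigma_{\delta_n}\in C_c^\infty(\mathbb{R}^{d(n+1)})$ a mollifier of bandwidth $\delta_n\le 1$ chosen small enough that $\sup_{|\mathbf z|\le n}|g_n(\mathbf z)-\widetilde G_n(\mathbf z)|\le 1/n$; this is feasible by continuity of $\widetilde G_n$ on the compact ball.

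The verification then proceeds as follows. The piecewise-linear stability $\|R_n(\mathbf z)\|_\infty\le\max_j|z_{\ell(K,j)}|\le|\mathbf z|$ gives $|\widetilde G_n(\mathbf z)|\le M(1+|\mathbf z|^p)$ with $M$, $p$ \emph{independent of $n$}; mollification with $\delta_n\le 1$ preserves such a bound for $g_n$, and $\partial^\alpha g_n=\widetilde G_n\ast\partial^\alpha\sigma_{\delta_n}$ inherits polynomial growth of every order. Property II) follows at once from $\|\widetilde G_n\|_\infty\le\|\xi\|_\infty$. For I), comparing $R_n(c(\boldsymbol x))$ with the interpolant through the true values $\boldsymbol x(t_{K,j})$ yields $\|R_n(c(\boldsymbol x))-\boldsymbol x\|_\infty\le\omega_{\boldsymbol x}(2^{-K(n)})+\omega_{\boldsymbol x}(T/2^{K(n)})\to 0$; since $|c(\boldsymbol x)|\le\sqrt{n+1}\,\|\boldsymbol x\|_\infty\le n$ for all large $n$, the smoothing error at $c(\boldsymbol x)$ is at most $1/n$, so $\xi_n(\boldsymbol x)\to\xi(\boldsymbol x)$. \emph{The main technical obstacle} is the uniformity in $n$ of the polynomial-growth constants for $g_n$: an orthonormal-basis expansion (e.g.\ Fourier) would yield bounds like $(n+1)^{p/2}|\mathbf z|^p$ diverging with $n$, and this is precisely what forces the piecewise-linear (hat-function) reconstruction, whose dimension-free stability $\|R_n(\mathbf z)\|_\infty\le|\mathbf z|$ makes the bounds on $g_n$ uniform across all $n$.
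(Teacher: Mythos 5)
Your construction is correct but takes a genuinely different route from the paper. The paper builds the $\psi_\ell$ from primitives of the Fourier basis $e_\ell$ on $[0,T]$, defines $T_n\boldsymbol x$ via Fej\'er means of $\boldsymbol x-\Lambda\boldsymbol x$ (where $\Lambda\boldsymbol x$ is a linear correction), sets $\tilde g_n(\mathbf z)=\xi\bigl(\mathbf z_0 e_{-1}-\sum_{\ell=1}^n\tfrac{n+1-\ell}{n+1}\mathbf z_\ell(e_\ell-e_\ell(0))\bigr)$, mollifies to get a doubly-indexed family $\{g_{n,k}\}$, and then extracts a diagonal subsequence via a separate equicontinuity lemma (Lemma D.1 of an earlier paper). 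You instead use a dyadic family of mollified bump primitives for $\psi_\ell$, so that the coefficients $c_\ell(\boldsymbol x)$ recover mollified values $\boldsymbol x(t_{k,j})$, and reconstruct by piecewise-linear interpolation $R_n$; the quantitative choice $\delta_n$ makes the argument self-contained, with no need for the diagonal/equicontinuity machinery.

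Your observation about the uniform polynomial-growth bound in item III)\,i) is a real point of contrast. With arbitrary $\mathbf z\in\R^{d(n+1)}$ (not actual Fourier coefficients of a bounded function), the paper's reconstruction $\mathbf z_0 e_{-1}-\sum_\ell\tfrac{n+1-\ell}{n+1}\mathbf z_\ell(e_\ell-e_\ell(0))$ can have sup-norm of order $\sqrt{n+1}\,|\mathbf z|$, so the resulting bound on $\tilde g_n$ (hence $g_n$) picks up an $(n+1)^{p/2}$ factor; the Fej\'er bound $\|\sigma_n(f)\|_\infty\le\|f\|_\infty$ is not available here because the $\mathbf z_\ell$ are free. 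The paper dispatches this step with ``it is then easy to see,'' but it is not easy to see, and your hat-function stability $\|R_n(\mathbf z)\|_\infty\le\max_j|z_{\ell(K,j)}|\le|\mathbf z|$ is precisely the dimension-free estimate that makes the constants $M',p'$ genuinely independent of $n$. The cost is a fussier bookkeeping of the dyadic enumeration and boundary mollifiers (you note the asymmetric modification near $t=0,T$; one also needs the bandwidth $\le T/2^{k+1}$ for interior nodes at small $k$ so the supports stay in $[0,T]$ --- a small fix). Overall your route is arguably cleaner on the one point where the paper's argument is thin, at the price of a less classical reconstruction operator.
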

\begin{proof} \textbf{Step I.} Let $\{e_\ell\}_{\ell\geq0}$ be the following orthonormal basis of $L^2([0,T];\R)$:
\[
e_0(t) \ = \ \frac{1}{\sqrt{T}}, \qquad e_{2\ell-1}(t) \ = \ \sqrt{\frac{2}{T}}\sin\bigg(2\ell\pi\frac{t}{T}\bigg), \qquad e_{2\ell}(t) \ = \ \sqrt{\frac{2}{T}}\cos\bigg(2\ell\pi\frac{t}{T}\bigg),
\]
for all $\ell\geq1$ and $t\in[0,T]$. Consider the linear operator $\Lambda\colon C([0,T];\R^d)\rightarrow C([0,T];\R^d)$ given by
\[
(\Lambda\boldsymbol x)(t) \ = \ \boldsymbol x(T)\frac{t}{T}, \qquad \text{for all }t\in[0,T],\,\,\boldsymbol x\in C([0,T];\R^d).
\]
Observe that $(\boldsymbol x-\Lambda\boldsymbol x)(0) = (\boldsymbol x-\Lambda\boldsymbol x)(T)=0$. Now, for every $n\geq0$, define
\begin{align*}
s_n(\boldsymbol x - \Lambda\boldsymbol x) \ &= \ \sum_{\ell=0}^n (\boldsymbol x - \Lambda\boldsymbol x)_\ell \, e_\ell, \\
\sigma_n(\boldsymbol x - \Lambda\boldsymbol x) \ &= \ \frac{s_0(\boldsymbol x - \Lambda\boldsymbol x) + \cdots + s_n(\boldsymbol x - \Lambda\boldsymbol x)}{n+1} \ = \ \sum_{\ell=0}^n \frac{n+1-\ell}{n+1} (\boldsymbol x - \Lambda\boldsymbol x)_\ell \, e_\ell,
\end{align*}
where $(\boldsymbol x - \Lambda\boldsymbol x)_\ell$ is given by
\[
\int_0^T (\boldsymbol x(t) - (\Lambda\boldsymbol x)(t)) e_\ell(t) dt \ = \ \int_0^T \boldsymbol x(t) e_\ell(t) dt - \boldsymbol x(T){\mathcal E}_\ell(T) + x(T)\frac{1}{T}\int_0^T {\mathcal E}_\ell(t)dt,
\]
with ${\mathcal E}_\ell$ being a primitive of $e_\ell$. In particular, we take
\[
{\mathcal E}_0(t) \ = \ \frac{t}{\sqrt{T}} - \frac{\sqrt{T}}{2}, \;\; {\mathcal E}_{2\ell-1}(t) \ = \ - \sqrt{\frac{T}{2}}\frac{1}{\ell\pi}\cos\bigg(2\ell\pi\frac{t}{T}\bigg), \;\; {\mathcal E}_{2\ell}(t) \ = \ \sqrt{\frac{T}{2}}\frac{1}{\ell\pi}\sin\bigg(2\ell\pi\frac{t}{T}\bigg),
\]
for all $\ell\geq1$ and $t\in[0,T]$. Then
\[
(\boldsymbol x - \Lambda\boldsymbol x)_\ell \ = \ \int_0^T \boldsymbol x(t) e_\ell(t) dt - \boldsymbol x(T){\mathcal E}_\ell(T) \ = \ - \int_{[0,T]}{\mathcal E}_\ell(t)\,d^-\boldsymbol x(t),
\]
for all $\ell\geq0$. By Fej\'er's theorem (see for instance Theorem III.3.4 in \cite{zygmund02}), we have
\[
\|\sigma_n(\boldsymbol x-\Lambda\boldsymbol x)-(\boldsymbol x-\Lambda\boldsymbol x)\|_\infty \ \overset{n\rightarrow\infty}{\longrightarrow} \ 0 \qquad\quad \text{ and } \qquad\quad \|\sigma_n(\boldsymbol x-\Lambda\boldsymbol x)\|_\infty \ \leq \ \|\boldsymbol x-\Lambda\boldsymbol x\|_\infty,
\]
for all $\boldsymbol x\in C([0,T];\R^d)$. Consider the linear operator $T_n\colon C([0,T];\R^d)\rightarrow C([0,T];\R^d)$ given by ($e_{-1}(t):=\tfrac{t}{T}$, for all $t\in[0,T]$)
\begin{align*}
T_n\boldsymbol x \ &= \ \Lambda\boldsymbol x + \sigma_n(\boldsymbol x-\Lambda\boldsymbol x) - \big(\sigma_n(\boldsymbol x-\Lambda\boldsymbol x)\big)(0) \\
&= \ \boldsymbol x(T)e_{-1} + \sum_{\ell=1}^n \frac{n+1-\ell}{n+1} (\boldsymbol x - \Lambda\boldsymbol x)_\ell \, (e_\ell - e_\ell(0)),
\end{align*}
for all $n\geq0$, where for the latter equality we used the fact that $e_0$ is constant. Then, for any $\boldsymbol x\in C([0,T];\R^d)$, $\|T_n\boldsymbol x-\boldsymbol x\|_\infty\rightarrow0$, as $n$ tends to infinity. Furthermore, there exists a positive constant $C$, independent of $n$, such that
\begin{equation}
\label{E:UniformBoundT_n}
\|T_n\boldsymbol x\|_\infty \ \leq \ C\|\boldsymbol x\|_\infty, \qquad \text{for all }\boldsymbol x\in C([0,T];\R^d),\,\,n\geq0.
\end{equation}
Then, we define $\tilde\xi_n(\boldsymbol x):=\xi(T_n\boldsymbol x)$. Notice that
\begin{align*}
\tilde\xi_n(\boldsymbol x) \ &= \ \xi\bigg(\boldsymbol x(T)e_{-1} + \sum_{\ell=1}^n \frac{n+1-\ell}{n+1} (\boldsymbol x - \Lambda\boldsymbol x)_\ell \, (e_\ell - e_\ell(0))\bigg) \\
&= \ \xi\bigg(\boldsymbol x(T)e_{-1} - \sum_{\ell=1}^n \frac{n+1-\ell}{n+1} \bigg(\int_{[0,T]}{\mathcal E}_\ell(t)\,d^-\boldsymbol x(t)\bigg) \, (e_\ell - e_\ell(0))\bigg) \\
&= \ \tilde g_n\bigg(\boldsymbol x(T)\,,\,\int_{[0,T]}{\mathcal E}_1(t)\,d^-\boldsymbol x(t)\,,\,\ldots\,,\,\int_{[0,T]}{\mathcal E}_n(t)\,d^-\boldsymbol x(t)\bigg) \\
&= \ \tilde g_n\bigg(\int_{[0,T]}1\,d^-\boldsymbol x(t)\,,\,\int_{[0,T]}{\mathcal E}_1(t)\,d^-\boldsymbol x(t)\,,\,\ldots\,,\,\int_{[0,T]}{\mathcal E}_n(t)\,d^-\boldsymbol x(t)\bigg),
\end{align*}
where the last inequality follows from the identity $\boldsymbol x(T)=\int_{[0,T]}1\,d^-\boldsymbol x(t)$, while $\tilde g_n$ is a map from $\R^{d(n+1)}$ into $\R$ given by
\begin{equation}\label{tilde_g_n}
\tilde g_n(\mathbf z) \ := \ \xi\bigg(\mathbf z_0 e_{-1} - \sum_{\ell=1}^n \frac{n+1-\ell}{n+1}\,\mathbf z_\ell\,(e_\ell - e_\ell(0))\bigg),
\end{equation}
for all $\mathbf z\in\R^{d(n+1)}$, with $\mathbf z=(\mathbf z_0,\ldots,\mathbf z_n)$ and $\mathbf z_0,\ldots,\mathbf z_n\in\R^d$. From now on we denote
\begin{equation}\label{varphi}
\psi_0(t) \ = \ 1, \qquad\qquad\qquad\qquad \psi_\ell(t) \ = \ {\mathcal E}_\ell(t), \qquad \ell\geq1.
\end{equation}

\vspace{1mm}

\noindent\textbf{Step II.} We begin introducing the double sequence $\{g_{n,k}\}_{n\geq0,k\geq1}$, with $g_{n,k}\colon\R^{d(n+1)}\rightarrow\R$ given by
\[
g_{n,k}(\mathbf z) \ = \ \int_{\R^{d(n+1)}} \tilde g_n(\mathbf z - \mathbf w)\,\zeta_{n,k}(\mathbf w)\,d\mathbf w,
\]
where $\zeta_{n,k}(\mathbf z) = k^{d(n+1)}\zeta_n(k\,\mathbf z)$, for all $\mathbf z\in\R^{d(n+1)}$, with
\[
\zeta_n(\mathbf z) \ = \ c_n \prod_{\ell=0}^n \exp\bigg(\frac{1}{\mathbf z_\ell^2 - 2^{-2\ell}}\bigg) 1_{\{|\mathbf z_\ell|<2^{-\ell}\}}, \qquad \text{for all }\mathbf z=(\mathbf z_0,\ldots,\mathbf z_n)\in\R^{d(n+1)}.
\]
The constant $c_n>0$ is such that $\int_{\R^{d(n+1)}} \zeta_n(\mathbf z)\,d\mathbf z=1$. We also introduce the double sequence $\{\xi_{n,k}\}_{n\geq0,k\geq1}$, with $\xi_{n,k}\colon C([0,T];\R^d)\rightarrow\R$ given by
\[
\xi_{n,k}(\boldsymbol x) \ = \ g_{n,k}\bigg(\int_{[0,T]}\psi_0(t)\,d^-\boldsymbol x(t),\ldots,\int_{[0,T]}\psi_n(t)\,d^-\boldsymbol x(t)\bigg), \qquad \boldsymbol x\in C([0,T];\R^d),
\]
with $\psi_0,\ldots,\psi_n$ as in \eqref{varphi}. Our aim is to apply Lemma D.1 in \cite{cosso_russoStrong-Visc}. To this end, we need to prove the following items:
\begin{enumerate}[a)]
\item $\xi_{n,k}$ is continuous;
\item for every $\boldsymbol x\in\boldsymbol C([0,T];\R^d)$, $|\xi_{n,k}(\boldsymbol x)-\tilde \xi_n(\boldsymbol x)|\rightarrow0$ as $k\rightarrow+\infty$;
\item $\{\xi_{n,k}\}_{n\geq0,k\geq1}$ is equicontinuous on compact sets.
\end{enumerate}
Suppose for a moment that items a)-b)-c) hold true. Then, by Lemma D.1 in \cite{cosso_russoStrong-Visc} we deduce the existence of a subsequence $\{\xi_{n,k_n}\}_n$ converging pointwise to $\xi$. Hence, we set $\xi_n:=\xi_{n,k_n}$ and $g_n:=g_{n,k_n}$. It is then easy to see that $\{\xi_n\}_n$ is the claimed sequence. It remains to prove a)-b)-c). As items a) and b) can be easily proved, we only report the proof of item c).

\vspace{1mm}

\noindent\textbf{Step III.} Let us prove item c). We begin noticing that $g_{n,k}$ can be rewritten as follows:
\[
g_{n,k}(\mathbf z) \ = \ \int_{\boldsymbol E_n} \tilde g_n(\mathbf z - \mathbf w)\,\zeta_{n,k}(\mathbf w)\,d\mathbf w,
\]
where
\[
\boldsymbol E_n \ := \ \big\{\mathbf z=(\mathbf z_0,\ldots,\mathbf z_n)\in\R^{d(n+1)}\colon|\mathbf z_\ell|\leq 2^{-\ell},\,\ell=0,\ldots,n\big\}.
\]
Then, the claim follows if we prove that for any compact set $\boldsymbol K\subset C([0,T];\R^d)$ there exists a modulus of continuity $\rho_{\boldsymbol K}$ such that, for every $n$,
\begin{align}
\label{UCtilde_g_n}
&\bigg|\tilde g_n\bigg(\int_{[0,T]}\psi_0(t)\,d^-\boldsymbol x(t) + \mathbf z_0,\ldots,\int_{[0,T]}\psi_n(t)\,d^-\boldsymbol x(t) + \mathbf z_n\bigg) \\
&- \tilde g_n\bigg(\int_{[0,T]}\psi_0(t)\,d^-\boldsymbol x'(t) + \mathbf z_0,\ldots,\int_{[0,T]}\psi_n(t)\,d^-\boldsymbol x'(t) + \mathbf z_n\bigg)\bigg| \ \leq \ \rho_{\boldsymbol K}(\|\boldsymbol x - \boldsymbol x'\|_\infty), \notag
\end{align}
for all $\boldsymbol x,\boldsymbol x'\in\boldsymbol K$ and $\mathbf z=(\mathbf z_0,\ldots,\mathbf z_n)\in\boldsymbol E_n$. Let us prove \eqref{UCtilde_g_n}. Given a compact set $\boldsymbol K\subset C([0,T];\R^d)$, we denote
\begin{align*}
\boldsymbol{\mathcal K} \ := \ \bigg\{\boldsymbol x\in C([0,T];\R^d)\colon \boldsymbol x \ = \ T_n\boldsymbol{\bar x} + \mathbf z_0 e_{-1} - \sum_{\ell=1}^n \frac{n+1-\ell}{n+1} \mathbf z_\ell (e_\ell - e_\ell(0)),& \\
\text{for some }\boldsymbol{\bar x}\in\boldsymbol K,\,\mathbf z\in\boldsymbol E_n,\,n\geq0&\bigg\}.
\end{align*}
Recalling \eqref{tilde_g_n}, we see that if $\boldsymbol{\mathcal K}$ is relatively compact then \eqref{UCtilde_g_n} follows from the uniform continuity of $\xi$ on compact sets (which in turn follows from the continuity of $\xi$). In order to prove that $\boldsymbol{\mathcal K}$ is relatively compact, we observe that $\boldsymbol{\mathcal K}\subset\boldsymbol{\mathcal K}_1+\boldsymbol{\mathcal K}_2:=\{\boldsymbol x\in C([0,T];\R^d)\colon\boldsymbol x \ = \ \boldsymbol x_1+\boldsymbol x_2,\,\boldsymbol x_1\in\boldsymbol{\mathcal K}_1,\,\boldsymbol x_2\in\boldsymbol{\mathcal K}_2\}$, where
\begin{align*}
\boldsymbol{\mathcal K}_1 := \big\{\boldsymbol x\in C([0,T];\R^d)\colon \boldsymbol x&=T_n\boldsymbol{\bar x},\,\text{for some }\boldsymbol{\bar x}\in\boldsymbol K,\,n\geq0\big\}, \\
\boldsymbol{\mathcal K}_2 := \bigg\{\!\boldsymbol x\in C([0,T];\R^d)\colon \boldsymbol x&=\mathbf z_0 e_{-1}\!-\!\sum_{\ell=1}^n \frac{n+1-\ell}{n+1} \mathbf z_\ell (e_\ell - e_\ell(0)),\,\text{for some }n,\,\mathbf z\in\boldsymbol E_n\!\bigg\}.
\end{align*}
If we prove that $\boldsymbol{\mathcal K}_1$ and $\boldsymbol{\mathcal K}_2$ are relatively compact, it follows that $\boldsymbol{\mathcal K}$ is also relatively compact.

\begin{itemize}
\item \emph{$\boldsymbol{\mathcal K}_1$ is relatively compact.} Let $\{\boldsymbol x_h\}_h$ be a sequence in $\boldsymbol{\mathcal K}_1$. Let us prove that, up to a subsequence, $\{\boldsymbol x_h\}_h$ converges. For each $h$, there exists $\boldsymbol{\bar x}_h\in\boldsymbol K$ and $n_h\geq0$ such that $\boldsymbol x_h \ = \ T_{n_h}\boldsymbol{\bar x}_h$. Suppose that, up to a subsequence, $n_h$ goes to infinity (the proof is simpler when $n_h$ is bounded). Since $\{\boldsymbol{\bar x}_h\}_h\subset\boldsymbol K$, there exists $\boldsymbol{\bar x}\in\boldsymbol K$ such that $\{\boldsymbol{\bar x}_h\}_h$ converges, up to a subsequence, to $\boldsymbol{\bar x}$. Then
\[
\|\boldsymbol x_h - \boldsymbol{\bar x}\|_\infty \ = \ \|T_{n_h}\boldsymbol{\bar x}_h - \boldsymbol{\bar x}\|_\infty \ \leq \ \|T_{n_h}\boldsymbol{\bar x}_h - T_{n_h}\boldsymbol{\bar x}\|_\infty + \|T_{n_h}\boldsymbol{\bar x} - \boldsymbol{\bar x}\|_\infty.
\]
By \eqref{E:UniformBoundT_n} and $\|T_{n_h}\boldsymbol{\bar x} - \boldsymbol{\bar x}\|_\infty\rightarrow0$, the claim follows.
\item \emph{$\boldsymbol{\mathcal K}_2$ is relatively compact.} Let $\{\boldsymbol x_h\}_h$ be a sequence in $\boldsymbol{\mathcal K}_2$. In order to prove that, up to a subsequence, $\{\boldsymbol x_h\}_h$ is convergent, we start
  noticing that
\[
\boldsymbol x_h \ = \ \mathbf z_{0,h} e_{-1} - \sum_{\ell=1}^{n_h} \frac{n_h+1-\ell}{n_h+1} \mathbf z_{\ell,h} (e_\ell - e_\ell(0)),
\]
for some $n_h\geq0$ and $\mathbf z_h=(\mathbf z_{0,h},\ldots,\mathbf z_{n_h,h})\in\boldsymbol E_{n_h}$. Suppose that, up to a subsequence, $n_h$ goes to infinity, otherwise the proof is simpler. Notice that, each sequence $\{\mathbf z_{\ell,h}\}_h$ converges, up to a subsequence, to some $\mathbf z_\ell$, with $|\mathbf z_\ell|\leq 2^{-\ell}$. By Cantor's diagonal argument, there exists a subsequence of $\{\boldsymbol x_h\}_h$, which we still denote $\{\boldsymbol x_h\}_h$, such that every $\{\mathbf z_{\ell,h}\}_h$ converges to $\mathbf z_\ell$. We construct this subsequence in such a way that, for every $h$, $|\mathbf z_{0,h}-\mathbf z_0|+\cdots+|\mathbf z_{n_h,h}-\mathbf z_{n_h}|\leq1/h$. It follows that $\|\boldsymbol x_h-\boldsymbol x\|_\infty\rightarrow0$, where $\boldsymbol x=\mathbf z_0 e_{-1} - \sum_{\ell=1}^\infty \mathbf z_\ell (e_\ell - e_\ell(0))$.
\end{itemize}
\end{proof}

\section*{Acknowledgements}

 The authors are very grateful to Prof. Mikhail Gomoyunov
for his careful reading of the first version of the paper, for his
comments and the very challenging questions.
They also acknowledge the Referees, the Associated Editor and the Editor in
 Chief for their stimulating comments.
The work of the second named author
was supported by a public grant as part of the
{\it Investissement d'avenir project, reference ANR-11-LABX-0056-LMH,
  LabEx LMH,}
in a joint call with Gaspard Monge Program for optimization, operations
research and their interactions with data sciences.

\bibliographystyle{plain}
\bibliography{references}

\def\polhk#1{\setbox0=\hbox{#1}{\ooalign{\hidewidth
  \lower1.5ex\hbox{`}\hidewidth\crcr\unhbox0}}}
\begin{thebibliography}{10}

\bibitem{AH02}
J.-P. Aubin and G.~Haddad.
\newblock History path dependent optimal control and portfolio valuation and
  management.
\newblock {\em Positivity}, 6(3):331--358, 2002.
\newblock Special issue of the mathematical economics.

\bibitem{BFG12}
M.~Bambi, G.~Fabbri, and F.~Gozzi.
\newblock Optimal policy and consumption smoothing effects in the time-to-build
  {AK} model.
\newblock {\em Econom. Theory}, 50(3):635--669, 2012.

\bibitem{paperPathDep}
A.~Barrasso and F.~Russo.
\newblock Decoupled mild solutions of path-dependent {PDE}s and {I}ntegro
  {PDE}s represented by {BSDE}s driven by cadlag martingales.
\newblock {\em Potential Analysis. To appear}, 2018.
\newblock Preprint hal-01774823.

\bibitem{BK18}
E.~Bayraktar and C.~Keller.
\newblock Path-dependent {H}amilton-{J}acobi equations in infinite dimensions.
\newblock {\em J. Funct. Anal.}, 275(8):2096--2161, 2018.

\bibitem{BCLY17}
A.~Bensoussan, M.~H.~M. Chau, Y.~Lai, and S.~C.~P. Yam.
\newblock Linear-quadratic mean field {S}tackelberg games with state and
  control delays.
\newblock {\em SIAM J. Control Optim.}, 55(4):2748--2781, 2017.

\bibitem{Billingsley}
P.~Billingsley.
\newblock {\em Convergence of probability measures}.
\newblock Wiley Series in Probability and Statistics: Probability and
  Statistics. John Wiley \& Sons, Inc., New York, second edition, 1999.
\newblock A Wiley-Interscience Publication.

\bibitem{BP87}
J.~M. Borwein and D.~Preiss.
\newblock A smooth variational principle with applications to
  subdifferentiability and to differentiability of convex functions.
\newblock {\em Trans. Amer. Math. Soc.}, 303(2):517--527, 1987.

\bibitem{BZ05}
J.~M. Borwein and Q.~J. Zhu.
\newblock {\em Techniques of variational analysis}, volume~20 of {\em CMS Books
  in Mathematics/Ouvrages de Math\'{e}matiques de la SMC}.
\newblock Springer-Verlag, New York, 2005.

\bibitem{BKMZ}
R.~Buckdahn, C.~Keller, J.~Ma, and J.~Zhang.
\newblock {\em \emph{Fully nonlinear stochastic and rough PDEs: Classical and
  viscosity solutions}}.
\newblock \emph{Preprint} arXiv:1501.06978.

\bibitem{CFMS18}
R.~Carmona, J.-P. Fouque, S.~M. Mousavi, and L.-H. Sun.
\newblock Systemic risk and stochastic games with delay.
\newblock {\em J. Optim. Theory Appl.}, 179(2):366--399, 2018.

\bibitem{cho}
A.~Chojnowska-Michalik.
\newblock Representation theorem for general stochastic delay equations.
\newblock {\em Bull. Acad. Polon. Sci. S\'er. Sci. Math. Astronom. Phys.},
  26(7):635--642, 1978.

\bibitem{contfournie10}
R.~Cont and D.-A. Fourni{\'e}.
\newblock Change of variable formulas for non-anticipative functionals on path
  space.
\newblock {\em J. Funct. Anal.}, 259(4):1043--1072, 2010.

\bibitem{ContFournieAP}
R.~Cont and D.-A. Fourni{\'e}.
\newblock Functional {I}t\^o calculus and stochastic integral representation of
  martingales.
\newblock {\em Ann. Probab.}, 41(1):109--133, 2013.

\bibitem{CF16}
R.~Cont and D.-A. Fourni\'{e}.
\newblock Functional {I}t\^{o} calculus and functional {K}olmogorov equations.
\newblock In {\em Stochastic integration by parts and functional {I}t\^{o}
  calculus}, Adv. Courses Math. CRM Barcelona, pages 115--207.
  Birkh\"{a}user/Springer, [Cham], 2016.

\bibitem{CGR}
A.~Cosso, C.~Di~Girolami, and F.~Russo.
\newblock Calculus via regularizations in {B}anach spaces and {K}olmogorov-type
  path-dependent equations.
\newblock In {\em Probability on algebraic and geometric structures}, volume
  668 of {\em Contemp. Math.}, pages 43--65. Amer. Math. Soc., Providence, RI,
  2016.

\bibitem{CFGRT}
A.~Cosso, S.~Federico, F.~Gozzi, M.~Rosestolato, and N.~Touzi.
\newblock Path-dependent equations and viscosity solutions in infinite
  dimension.
\newblock {\em Ann. Probab.}, 46(1):126--174, 2018.

\bibitem{cosso_russoNorway}
A.~Cosso and F.~Russo.
\newblock Functional and {B}anach space stochastic calculi: path-dependent
  {K}olmogorov equations associated with the frame of a {B}rownian motion.
\newblock In {\em Stochastics of environmental and financial economics --
  {C}entre of {A}dvanced {S}tudy, {O}slo, {N}orway, 2014--2015}, volume 138 of
  {\em Springer Proc. Math. Stat.}, pages 27--80. Springer, Cham, 2016.

\bibitem{cosso_russoStrict}
A.~Cosso and F.~Russo.
\newblock Functional {I}t\^o versus {B}anach space stochastic calculus and
  strict solutions of semilinear path-dependent equations.
\newblock {\em Infin. Dimens. Anal. Quantum Probab. Relat. Top.},
  19(4):1650024, 44, 2016.

\bibitem{cosso_russoStrong-Visc}
A.~Cosso and F.~Russo.
\newblock S{trong}-{viscosity} {solutions}: {classical} {and}
  {path}-{dependent} {PDE}s.
\newblock {\em Osaka J. Math.}, 56(2):323--373, 2019.

\bibitem{crandishiilions92}
M.~G. Crandall, H.~Ishii, and P.-L. Lions.
\newblock User's guide to viscosity solutions of second order partial
  differential equations.
\newblock {\em Bull. Amer. Math. Soc. (N.S.)}, 27(1):1--67, 1992.

\bibitem{CL81}
M.~G. Crandall and P.-L. Lions.
\newblock Condition d'unicit\'{e} pour les solutions g\'{e}n\'{e}ralis\'{e}es
  des \'{e}quations de {H}amilton-{J}acobi du premier ordre.
\newblock {\em C. R. Acad. Sci. Paris S\'{e}r. I Math.}, 292(3):183--186, 1981.

\bibitem{CL83}
M.~G. Crandall and P.-L. Lions.
\newblock Viscosity solutions of {H}amilton-{J}acobi equations.
\newblock {\em Trans. Amer. Math. Soc.}, 277(1):1--42, 1983.

\bibitem{CL1}
M.~G. Crandall and P.-L. Lions.
\newblock Hamilton-{J}acobi equations in infinite dimensions. {I}. {U}niqueness
  of viscosity solutions.
\newblock {\em J. Funct. Anal.}, 62(3):379--396, 1985.

\bibitem{DZ96}
G.~Da~Prato and J.~Zabczyk.
\newblock {\em Ergodicity for infinite-dimensional systems}, volume 229 of {\em
  London Mathematical Society Lecture Note Series}.
\newblock Cambridge University Press, Cambridge, 1996.

\bibitem{digirfabbrirusso13}
C.~Di~Girolami, G.~Fabbri, and F.~Russo.
\newblock The covariation for {B}anach space valued processes and applications.
\newblock {\em Metrika}, 77(1):51--104, 2014.

\bibitem{DGR}
C.~Di~Girolami and F.~Russo.
\newblock Infinite dimensional stochastic calculus via regularization and
  applications.
\newblock {\em Preprint \textup{HAL-INRIA, inria-00473947 version 1}},
  (Unpublished), 2010.

\bibitem{DGRnote}
C.~Di~Girolami and F.~Russo.
\newblock Clark-{O}cone type formula for non-semimartingales with finite
  quadratic variation.
\newblock {\em C. R. Math. Acad. Sci. Paris}, 349(3-4):209--214, 2011.

\bibitem{DGR2}
C.~Di~Girolami and F.~Russo.
\newblock Generalized covariation and extended {F}ukushima decomposition for
  {B}anach space-valued processes. {A}pplications to windows of {D}irichlet
  processes.
\newblock {\em Infin. Dimens. Anal. Quantum Probab. Relat. Top.},
  15(2):1250007, 50, 2012.

\bibitem{digirrusso12}
C.~Di~Girolami and F.~Russo.
\newblock Generalized covariation for {B}anach space valued processes, {I}t\^o
  formula and applications.
\newblock {\em Osaka J. Math.}, 51(3):729--783, 2014.

\bibitem{DGRClassical}
C.~Di~Girolami and F.~Russo.
\newblock About classical solutions of the path-dependent heat equation.
\newblock {\em Random Oper. Stoch. Equ.}, 28(1):35--62, 2020.

\bibitem{dupire}
B.~Dupire.
\newblock {\em \emph{Functional {I}t\^o calculus}}.
\newblock \emph{Portfolio Research Paper}, Bloomberg, 2009.

\bibitem{Ekeland}
I.~Ekeland.
\newblock On the variational principle.
\newblock {\em J. Math. Anal. Appl.}, 47:324--353, 1974.

\bibitem{EKTZ}
I.~Ekren, C.~Keller, N.~Touzi, and J.~Zhang.
\newblock On viscosity solutions of path dependent {PDE}s.
\newblock {\em Ann. Probab.}, 42(1):204--236, 2014.

\bibitem{etzI}
I.~Ekren, N.~Touzi, and J.~Zhang.
\newblock Viscosity solutions of fully nonlinear parabolic path dependent
  {PDE}s: {P}art {I}.
\newblock {\em Ann. Probab.}, 44(2):1212--1253, 2016.

\bibitem{etzII}
I.~Ekren, N.~Touzi, and J.~Zhang.
\newblock Viscosity solutions of fully nonlinear parabolic path dependent
  {PDE}s: {P}art {II}.
\newblock {\em Ann. Probab.}, 44(4):2507--2553, 2016.

\bibitem{EOS00}
I.~Elsanosi, B.~\O{}ksendal, and A.~Sulem.
\newblock Some solvable stochastic control problems with delay.
\newblock {\em Stochastics Stochastics Rep.}, 71(1-2):69--89, 2000.

\bibitem{Evans}
L.~C. Evans.
\newblock {\em Partial differential equations}, volume~19 of {\em Graduate
  Studies in Mathematics}.
\newblock American Mathematical Society, Providence, RI, second edition, 2010.

\bibitem{fabbrigozziswiech}
G.~Fabbri, F.~Gozzi, and A.~\'{S}wi\polhk ech.
\newblock {\em Stochastic optimal control in infinite dimension: dynamic
  programming and HJB equations, with a contribution by M. Fuhrman and G.
  Tessitore}, volume~82 of {\em Probability Theory and Stochastic Modelling}.
\newblock Springer, Cham, 2017.

\bibitem{F11}
S.~Federico.
\newblock A stochastic control problem with delay arising in a pension fund
  model.
\newblock {\em Finance Stoch.}, 15(3):421--459, 2011.

\bibitem{FGG10a}
S.~Federico, B.~Goldys, and F.~Gozzi.
\newblock H{JB} equations for the optimal control of differential equations
  with delays and state constraints, {I}: regularity of viscosity solutions.
\newblock {\em SIAM J. Control Optim.}, 48(8):4910--4937, 2010.

\bibitem{FGG10b}
S.~Federico, B.~Goldys, and F.~Gozzi.
\newblock H{JB} equations for the optimal control of differential equations
  with delays and state constraints, {II}: {V}erification and optimal
  feedbacks.
\newblock {\em SIAM J. Control Optim.}, 49(6):2378--2414, 2011.

\bibitem{flandoli_zanco13}
F.~Flandoli and G.~Zanco.
\newblock An infinite-dimensional approach to path-dependent {K}olmogorov
  equations.
\newblock {\em Ann. Probab.}, 44(4):2643--2693, 2016.

\bibitem{fournie}
D.-A. Fourni{\'e}.
\newblock {\em Functional {I}t{\^o} calculus and applications}.
\newblock Ph.D. Thesis - Columbia University, 2010.

\bibitem{friedman75vol1}
A.~Friedman.
\newblock {\em Stochastic differential equations and applications. {V}ol. 1}.
\newblock Academic Press, New York, 1975.
\newblock Probability and Mathematical Statistics, Vol. 28.

\bibitem{FMT10}
M.~Fuhrman, F.~Masiero, and G.~Tessitore.
\newblock Stochastic equations with delay: optimal control via {BSDE}s and
  regular solutions of {H}amilton-{J}acobi-{B}ellman equations.
\newblock {\em SIAM J. Control Optim.}, 48(7):4624--4651, 2010.

\bibitem{GKM09}
L.~G\"{o}llmann, D.~Kern, and H.~Maurer.
\newblock Optimal control problems with delays in state and control variables
  subject to mixed control-state constraints.
\newblock {\em Optimal Control Appl. Methods}, 30(4):341--365, 2009.

\bibitem{GM18}
L.~G\"{o}llmann and H.~Maurer.
\newblock Optimal control problems with time delays: two case studies in
  biomedicine.
\newblock {\em Math. Biosci. Eng.}, 15(5):1137--1154, 2018.

\bibitem{Gomoyunov}
M.~Gomoyunov.
\newblock {\em Private communication}.
\newblock 2020.

\bibitem{GM06}
F.~Gozzi and C.~Marinelli.
\newblock Stochastic optimal control of delay equations arising in advertising
  models.
\newblock In {\em Stochastic partial differential equations and
  applications---{VII}}, volume 245 of {\em Lect. Notes Pure Appl. Math.},
  pages 133--148. Chapman \& Hall/CRC, Boca Raton, FL, 2006.

\bibitem{GMS09}
F.~Gozzi, C.~Marinelli, and S.~Savin.
\newblock On controlled linear diffusions with delay in a model of optimal
  advertising under uncertainty with memory effects.
\newblock {\em J. Optim. Theory Appl.}, 142(2):291--321, 2009.

\bibitem{I82}
A.~Ichikawa.
\newblock Quadratic control of evolution equations with delays in control.
\newblock {\em SIAM J. Control Optim.}, 20(5):645--668, 1982.

\bibitem{IN64}
K.~It\^{o} and M.~Nisio.
\newblock On stationary solutions of a stochastic differential equation.
\newblock {\em J. Math. Kyoto Univ.}, 4:1--75, 1964.

\bibitem{IKS03}
A.~F. Ivanov, Y.~I. Kazmerchuk, and A.~V. Swishchuk.
\newblock Theory, stochastic stability and applications of stochastic delay
  differential equations: a survey of results.
\newblock {\em Differential Equations Dynam. Systems}, 11(1-2):55--115, 2003.

\bibitem{JKS02}
R.~Jensen, M.~Kocan, and A.~\'{S}wi\polhk ech.
\newblock Good and viscosity solutions of fully nonlinear elliptic equations.
\newblock {\em Proc. Amer. Math. Soc.}, 130(2):533--542, 2002.

\bibitem{KaratzasShreve}
I.~Karatzas and S.~E. Shreve.
\newblock {\em Brownian motion and stochastic calculus}, volume 113 of {\em
  Graduate Texts in Mathematics}.
\newblock Springer-Verlag, New York, second edition, 1991.

\bibitem{K99}
A.~V. Kim.
\newblock {\em Functional differential equations}, volume 479 of {\em
  Mathematics and its Applications}.
\newblock Kluwer Academic Publishers, Dordrecht, 1999.
\newblock Application of $i$-smooth calculus.

\bibitem{K15}
A.~V. Kim.
\newblock {\em {$i$}-smooth analysis}.
\newblock Scrivener Publishing, Salem, MA; John Wiley \& Sons, Inc., Hoboken,
  NJ, 2015.
\newblock Theory and applications.

\bibitem{KI15}
A.~V. Kim and A.~V. Ivanov.
\newblock {\em Systems with delays}.
\newblock John Wiley \& Sons, Inc., Hoboken, NJ; Scrivener Publishing, Salem,
  MA, 2015.
\newblock Analysis, control, and computations.

\bibitem{LR03}
B.~Larssen and N.~H. Risebro.
\newblock When are {HJB}-equations in stochastic control of delay systems
  finite dimensional?
\newblock {\em Stochastic Anal. Appl.}, 21(3):643--671, 2003.

\bibitem{Ohashi}
D.~Le\~{a}o, A.~Ohashi, and A.~B. Simas.
\newblock A weak version of path-dependent functional {I}t\^{o} calculus.
\newblock {\em Ann. Probab.}, 46(6):3399--3441, 2018.

\bibitem{L83c}
P.-L. Lions.
\newblock On the {H}amilton-{J}acobi-{B}ellman equations.
\newblock {\em Acta Appl. Math.}, 1(1):17--41, 1983.

\bibitem{L83a}
P.-L. Lions.
\newblock Optimal control of diffusion processes and
  {H}amilton-{J}acobi-{B}ellman equations. {I}. {T}he dynamic programming
  principle and applications.
\newblock {\em Comm. Partial Differential Equations}, 8(10):1101--1174, 1983.

\bibitem{L83b}
P.-L. Lions.
\newblock Optimal control of diffusion processes and
  {H}amilton-{J}acobi-{B}ellman equations. {II}. {V}iscosity solutions and
  uniqueness.
\newblock {\em Comm. Partial Differential Equations}, 8(11):1229--1276, 1983.

\bibitem{L88}
P.-L. Lions.
\newblock Viscosity solutions of fully nonlinear second-order equations and
  optimal stochastic control in infinite dimensions. {I}. {T}he case of bounded
  stochastic evolutions.
\newblock {\em Acta Math.}, 161(3-4):243--278, 1988.

\bibitem{L89a}
P.-L. Lions.
\newblock Viscosity solutions of fully nonlinear second order equations and
  optimal stochastic control in infinite dimensions. {II}. {O}ptimal control of
  {Z}akai's equation.
\newblock In {\em Stochastic partial differential equations and applications,
  {II} ({T}rento, 1988)}, volume 1390 of {\em Lecture Notes in Math.}, pages
  147--170. Springer, Berlin, 1989.

\bibitem{L89b}
P.-L. Lions.
\newblock Viscosity solutions of fully nonlinear second-order equations and
  optimal stochastic control in infinite dimensions. {III}. {U}niqueness of
  viscosity solutions for general second-order equations.
\newblock {\em J. Funct. Anal.}, 86(1):1--18, 1989.

\bibitem{Lu01}
N.~Y. Lukoyanov.
\newblock A minimax solution of functional equations of {H}amilton-{J}acobi
  type for hereditary systems.
\newblock {\em Differ. Equ.}, 37(2):246--256, 2001.

\bibitem{Lu03a}
N.~Y. Lukoyanov.
\newblock Functional {H}amilton-{J}acobi type equations in ci-derivatives for
  systems with distributed delays.
\newblock {\em Nonlinear Funct. Anal. Appl.}, 8(3):365--397, 2003.

\bibitem{Lu03b}
N.~Y. Lukoyanov.
\newblock Functional {H}amilton-{J}acobi type equations with ci-derivatives in
  control problems with hereditary information.
\newblock {\em Nonlinear Funct. Anal. Appl.}, 8(4):535--555, 2003.

\bibitem{Lu07}
N.~Y. Lukoyanov.
\newblock Viscosity solution of nonanticipating equations of
  {H}amilton-{J}acobi type.
\newblock {\em Differ. Equ.}, 43(12):1715--1723, 2007.

\bibitem{M84}
S.~E.~A. Mohammed.
\newblock {\em Stochastic functional differential equations}, volume~99 of {\em
  Research Notes in Mathematics}.
\newblock Pitman (Advanced Publishing Program), Boston, MA, 1984.

\bibitem{M98}
S.~E.~A. Mohammed.
\newblock Stochastic differential systems with memory: theory, examples and
  applications.
\newblock In {\em Stochastic analysis and related topics, {VI} ({G}eilo,
  1996)}, volume~42 of {\em Progr. Probab.}, pages 1--77. Birkh\"{a}user
  Boston, Boston, MA, 1998.

\bibitem{Oberhauser}
H.~Oberhauser.
\newblock The functional {I}t\^o formula under the family of continuous
  semimartingale measures.
\newblock {\em Stoch. Dyn.}, 16(4):1650010, 26, 2016.

\bibitem{OSZ11}
B.~\O{}ksendal, A.~Sulem, and T.~Zhang.
\newblock Optimal control of stochastic delay equations and time-advanced
  backward stochastic differential equations.
\newblock {\em Adv. in Appl. Probab.}, 43(2):572--596, 2011.

\bibitem{pazy83}
A.~Pazy.
\newblock {\em Semigroups of linear operators and applications to partial
  differential equations}, volume~44 of {\em Applied Mathematical Sciences}.
\newblock Springer-Verlag, New York, 1983.

\bibitem{peng12}
S.~Peng.
\newblock {\em \emph{Note on viscosity solution of path-dependent {PDE} and
  $G$-martingales}}.
\newblock \emph{Preprint} arXiv: 1106.1144v2, 2012.

\bibitem{PS15}
S.~Peng and Y.~Song.
\newblock {$G$}-expectation weighted {S}obolev spaces, backward {SDE} and path
  dependent {PDE}.
\newblock {\em J. Math. Soc. Japan}, 67(4):1725--1757, 2015.

\bibitem{peng_wang}
S.~Peng and F.~Wang.
\newblock B{SDE}, path-dependent {PDE} and nonlinear {F}eynman-{K}ac formula.
\newblock {\em Sci. China Math.}, 59(1):19--36, 2016.

\bibitem{Pollard}
D.~Pollard.
\newblock {\em Convergence of stochastic processes}.
\newblock Springer Series in Statistics. Springer-Verlag, New York, 1984.

\bibitem{protter05}
P.~E. Protter.
\newblock {\em Stochastic integration and differential equations}, volume~21 of
  {\em Stochastic Modelling and Applied Probability}.
\newblock Springer-Verlag, Berlin, 2005.
\newblock Second edition. Version 2.1, Corrected third printing.

\bibitem{R16}
Z.~Ren.
\newblock Viscosity solutions of fully nonlinear elliptic path dependent
  partial differential equations.
\newblock {\em Ann. Appl. Probab.}, 26(6):3381--3414, 2016.

\bibitem{rtz1}
Z.~Ren, N.~Touzi, and J.~Zhang.
\newblock {\em \emph{Comparison of viscosity solutions of semi-linear
  path-dependent {PDE}s}}.
\newblock \emph{Preprint} arXiv:1410.7281.

\bibitem{rtz3}
Z.~Ren, N.~Touzi, and J.~Zhang.
\newblock Comparison of viscosity solutions of fully nonlinear degenerate
  parabolic path-dependent {PDE}s.
\newblock {\em SIAM J. Math. Anal.}, 49(5):4093--4116, 2017.

\bibitem{RSTM18}
F.~Rodrigues, C.~J. Silva, D.~F.~M. Torres, and H.~Maurer.
\newblock Optimal control of a delayed {HIV} model.
\newblock {\em Discrete Contin. Dyn. Syst. Ser. B}, 23(1):443--458, 2018.

\bibitem{RussoValloisSurvey}
F.~Russo and P.~Vallois.
\newblock Elements of stochastic calculus via regularization.
\newblock In {\em S\'{e}minaire de {P}robabilit\'{e}s {XL}}, volume 1899 of
  {\em Lecture Notes in Math.}, pages 147--185. Springer, Berlin, 2007.

\bibitem{S84}
M.~Scheutzow.
\newblock Qualitative behaviour of stochastic delay equations with a bounded
  memory.
\newblock {\em Stochastics}, 12(1):41--80, 1984.

\bibitem{S88}
H.~M. Soner.
\newblock On the {H}amilton-{J}acobi-{B}ellman equations in {B}anach spaces.
\newblock {\em J. Optim. Theory Appl.}, 57(3):429--437, 1988.

\bibitem{S80}
A.~I. Subbotin.
\newblock Generalization of the fundamental equation of the theory of
  differential games.
\newblock {\em Dokl. Akad. Nauk SSSR}, 254(2):293--297, 1980.

\bibitem{SS78}
A.~I. Subbotin and N.~N. Subbotina.
\newblock Necessary and sufficient conditions for the piecewise smooth value of
  a differential game.
\newblock {\em Dokl. Akad. Nauk SSSR}, 243(4):862--865, 1978.

\bibitem{swiech94}
A.~{\'S}wi{\polhk{e}}ch.
\newblock ``{U}nbounded'' second order partial differential equations in
  infinite-dimensional {H}ilbert spaces.
\newblock {\em Comm. Partial Differential Equations}, 19(11-12):1999--2036,
  1994.

\bibitem{tangzhang13}
S.~Tang and F.~Zhang.
\newblock Path-dependent optimal stochastic control and viscosity solution of
  associated {B}ellman equations.
\newblock {\em Discrete Contin. Dyn. Syst.}, 35(11):5521--5553, 2015.

\bibitem{VK81}
R.~B. Vinter and R.~H. Kwong.
\newblock The infinite time quadratic control problem for linear systems with
  state and control delays: an evolution equation approach.
\newblock {\em SIAM J. Control Optim.}, 19(1):139--153, 1981.

\bibitem{Zhang}
J.~Zhang.
\newblock {\em {Backward Stochastic Differential Equations - From Linear to
  Fully Nonlinear Theory}}, volume~86 of {\em Probability Theory and Stochastic
  Modelling}.
\newblock Springer, New York, 2017.

\bibitem{zygmund02}
A.~Zygmund.
\newblock {\em Trigonometric series. {V}ol. {I}, {II}}.
\newblock Cambridge Mathematical Library. Cambridge University Press,
  Cambridge, third edition, 2002.

\end{thebibliography}

\end{document}